\newcommand{\cd}{\cdot}
\newcommand{\ra}{\rightarrow}
\newcommand{\pr}{\prime}
\newcommand{\de}{\partial}
\newcommand{\te}{\theta}
\newcommand{\cA}{\mathcal{A}}
\newcommand{\dA}{{\rm d}A}
\newcommand{\cB}{\mathcal{B}}
\newcommand{\cE}{\mathcal{E}}
\newcommand{\cF}{\mathcal{F}}
\newcommand{\C}{\mathbb{C}}
\newcommand{\R}{\mathbb{R}}
\newcommand{\Z}{\mathbb{Z}}
\newcommand{\mZ}{\Z}
\newcommand{\F}{\mathbb{F}}
\newcommand{\be}{\begin{equation}}
\newcommand{\ee}{\end{equation}}
\newcommand{\abs}[1]{\left\lvert #1 \right\rvert}
\newcommand{\lbar}[1]{\overline{#1}}
\newcommand{\id}{\mathrm{id}}
\DeclareMathOperator{\mathdash}{-}
\DeclareMathOperator{\parea}{-area}
\renewcommand{\coprod}{\rotatebox[origin = c]{180}{$\prod$}}
\newcommand{\hookuparrow}{\mathrel{\rotatebox[origin=c]{90}{$\hookrightarrow$}}}
\newtheorem{thm}{Theorem}[subsection]
\newtheorem{conjecture}{Conjecture}[subsection]
\newtheorem{lemma}{Lemma}[subsection]
\newtheorem{prop}{Proposition}[subsection]
\newenvironment{customlm}[1]
  {\innercustomlm}
  {\endinnercustomlm}
\theoremstyle{definition}
\newtheorem{definition}{Definition}[subsection]
\newtheorem*{definition*}{Definition}
\theoremstyle{remark}
\newtheorem*{note}{Note}
\numberwithin{equation}{subsection}
\begin{document}

\title{Building manifolds from quantum codes}

\author{Michael Freedman}
\address{\hskip-\parindent
	Michael Freedman \\
	Microsoft Research, Station Q, and Department of Mathematics \\
	University of California, Santa Barbara \\
	Santa Barbara, CA 93106 \\
}

\author{Matthew Hastings}
\begin{abstract}
We give a procedure for ``reverse engineering" a closed, simply connected, Riemannian manifold with bounded local geometry from a sparse chain complex over $\Z$.  Applying this procedure to chain complexes obtained by ``lifting" recently developed quantum codes, which correspond to chain complexes over $\Z_2$, we construct the first examples of power law $\Z_2$ systolic freedom.

As a result that may be of independent interest in graph theory, we give an efficient randomized algorithm to construct a weakly fundamental cycle basis for a graph, such that each edge appears only polylogarithmically times in the basis.  We use this result to trivialize the fundamental group of the manifold we construct.
\end{abstract}

\maketitle

\section{Building manifolds from quantum codes}
\subsection{Introduction}
The last fifty years has seen a glorious exploration of the possible ``shapes'' of Riemannian manifolds. Often the new ideas contained a combinatorial element reminiscent of geometric topology and rather far from the study of homogenous spaces in which Riemannian geometry had incubated during the previous fifty years. This paper is in the modern spirit of building and studying Riemannian manifolds from a combinatorial perspective. In this case we use recently discovered quantum codes to build closed Riemannian manifolds with a surprising property: \emph{power law $\Z_2$-systolic freedom}. This discovery in effect reverses the historical flow of information. Twenty years ago one of us helped to create quantum codes exhibiting a weaker \emph{poly(log) $\Z_2$-systolic freedom} \cites{flm02,freedman99}, starting with a certain family of Riemannian 3-manifolds. In retrospect it is not surprising that the most efficient constructions will necessarily have a combinatorial complexity beyond a geometer's intuition, perhaps even requiring computer search, and are better discovered within coding theory and then translated into geometry. This translation is our subject. But before launching into the code class and its translations into manifolds, we review the concept of systolic freedom as pioneered in \cite{gromov96} and \cite{gromov99}.

The \emph{systole} of a closed Riemannian surface is defined to be the length of its shortest essential loop. In 1949 Loewner [L] proved that for the torus $T^2$,
\[
    \frac{\text{systole}^2}{\text{area}} \leq \frac{2}{\sqrt{3}}
\]
with equality (only) for the flat torus modeled on a regular hexagon. The l.h.s.\ is called the systolic ratio (SR). Bounding this quantity for general surfaces is still an area of research \cite{wiki} with a strong number theoretic flavor.

More generally, if $M^d$ is a connected, closed Riemannian manifold of dimension $d$, and $p+q=d$, one can discuss the $(p,q)$-SR of $M$.

\begin{definition}
    The $(p,q)$-Riemannian systolic ratio of $M$, $K\mathdash (p,q)\mathdash \operatorname{SR}_R(M)$ is $\underset{P,Q}{\inf}\ \frac{\text{area}_p(P) \text{area}_q(Q)}{\text{vol}_d(M)}$ where the infimum is taken over all smooth $p(q)$-cycles, $P(Q)$ representing non-trivial homology classes in $H_{p(q)}(M;K)$ where $K$ is a ring.
\end{definition}

\begin{note}
    In practice it is the cases of $K = \Z$ (integers) and $K = \Z_2$ (integers mod 2) that are most often considered. The case $K = \Z_2$, as we will shortly see, is by far the more subtle and is the case considered in this paper.
\end{note}

\begin{definition}
\label{defn112}
    If $M_c$ is a manifold with a fixed triangulation\footnote{Throughout, triangulation refers to a PL triangulation.} we can similarly define the $(p,q)$-combinatorial systolic ratio of $M_c$, $K \mathdash (p,q) \mathdash \operatorname{SR}_c(M_c)$ as $\inf_{P,Q} \frac{\#_p(P) \cd \#_q(Q)}{\#_d(M)}$, where $\#_{p(q)}$ is the number of $p(q)$-simplexes comprising $P(Q)$, an essential $p(q)$-cycle$\in H_{p(q)}(M;K)$, where $[P] \cd [Q] \neq 0 \in H_0(M;K) \cong K$.
    When counting simplexes in a cycle, we count $|{\rm multiplicity}|$, so $\#(\sum_i a_i \sigma_i)=\sum_i |a_i|$.
\end{definition}

In both Riemannian and combinatorial contexts a manifold $M$ has \emph{$K \mathdash (p,q)$-systolic freedom} if $M$ admits a sequence $\{i\}$ of Riemannian metrics (triangulation) so that $K \mathdash (p,q) \mathdash \operatorname{SR}_R(M_i) \ra \infty$ ($K \mathdash (p,q) \mathdash \operatorname{SR}_c(M_i) \ra \infty$). One can also say that an arbitrary sequence $M_i$ has systolic freedom if these quantities approach 0, without requiring the members of the sequence $\{M_i\}$ to be all diffeomorphic (combinatorially equivalent).

It is interesting to go further and attempt to measure the \emph{amount} of systolic freedom in a sequence $\{M_i\}$ (of any type Riemannian or combinatorial, constant or variable topology). This may be directly implemented in the combinatorial setting where one may say that there is \emph{poly-log} or even \emph{power-law} systolic freedom if in the systolic ratios $\operatorname{SR}(M_i) = \frac{\text{numerator}_i}{\text{denominator}_i} = \frac{n_i}{d_i}$, $n_i = \Omega(\log(d_i)^\alpha d_i)$, or $n_i = \Omega(d_i^{(1+\alpha)})$ (respectively) for some $\alpha>0$, where the $\Omega$-notation means for sufficiently large $i$ ``greater than a positive constant times...'', while we later use $O(\cd)$ to denote ``at most a positive constant times..." and
$\Theta(\cd)$ to denote ``both $\Omega(\cd)$ and $O(\cd)$, i.e., at least a positive constant times and at most a positive constant times..."

There is an obvious difficulty in similarly attempting to quantify systolic freedom in the Riemannian case, as merely dilating all metric tensors $\{M_i\}$ by factors $\{l^2_i\}$ replaces $\{\frac{n_i}{d_i}\}$ with $\{\frac{l_i^d n_i}{l_i^d d_i}\}$, diminishing the power (or polylog) of divergence if $\{l_i\}$ diverges and increasing it if $\{l_i\}$ approaches zero. To avoid this problem, let us from here forward assume all sequences of Riemannian metrics are \emph{appropriately scaled}.

\begin{definition}
    A sequence of Riemannian metrics is appropriately scaled if:
    \begin{enumerate}
        \item All sectional curvatures lie in the range $[-O(1),O(1)]$ and
        \item inj.\ rad.$(M_i) = \Omega(1)$.
    \end{enumerate}
\end{definition}

\begin{thm}\label{thm:SRconstant}
    Fixing the dimension $d$, to any appropriately scaled sequence of closed Riemannian manifolds $\{M_i\}$ there corresponds a sequence of concretely Whitehead triangulated\footnote{The adjective ``Whitehead'' [W40] refers to his condition that all $k$-simplexes of the triangulation are uniformly biLipschitz equivalent to a Euclidean ball (of some radius) and that there is a uniform bound on the number of simplexes that any given simplex meets.} P.L.\ manifolds $\{M_{i,\epsilon}\}$ so that the three quantities in the definition of SR are all within constants:
    \[
         \frac{\inf \operatorname{area}_p(P_i)}{\#_p(P_i)} = \Theta(1),\  \frac{\inf \operatorname{area}_q(Q_i)}{\#_q(Q_i)} = \Theta(1), \text{ and } \frac{\operatorname{vol}_d(M_i)}{\#_d(M_i)} = \Theta(1)
    \]

    The constant does depend on the dimension $d$. Note that the middle condition is actually equivalent to the first since $0 \leq p,q \leq d$ are not further specified.
\end{thm}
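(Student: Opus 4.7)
The plan is to build $M_{i,\epsilon}$ as the Delaunay-style triangulation of a maximal $\epsilon$-separated net in $M_i$, and to deduce all three $\Theta(1)$ identities from uniform biLipschitz control on cells; the cycle identities will additionally require a quantitative simplicial approximation argument.

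\textbf{Construction and Whitehead property.} I would pick $\epsilon > 0$ smaller than a constant depending only on the curvature bound and the injectivity radius lower bound. For each $i$, take a maximal $\epsilon$-separated subset $N_i \subset M_i$; maximality forces $\epsilon$-density. Form the intrinsic Voronoi cells $\{V_x\}_{x \in N_i}$ and pass to the dual Delaunay nerve complex, then subdivide to obtain a PL triangulation $M_{i,\epsilon}$. The hypotheses imply that the exponential map at each $x \in N_i$ is $O(1)$-biLipschitz onto $B(x, 2\epsilon)$, so in normal coordinates each $V_x$ is close to a convex Euclidean polytope of diameter $\Theta(\epsilon)$. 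A Euclidean packing argument bounds the number of neighbors of each $V_x$ by a constant depending only on $d$, and a shape-regular subdivision into $d$-simplices produces a Whitehead triangulation. Consequently $\operatorname{vol}_k(\sigma) = \Theta(\epsilon^k)$ for every $k$-simplex $\sigma$; summing at $k = d$ yields $\operatorname{vol}_d(M_i) = \Theta(\epsilon^d) \cd \#_d(M_{i,\epsilon})$, settling the third identity once $\epsilon$ is fixed as a dimensional constant.

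\textbf{Cycle identities.} Fix a nontrivial class $\alpha \in H_p(M_i; K)$. The easy direction: any simplicial $p$-cycle $P'$ representing $\alpha$ is a Lipschitz singular cycle with $\operatorname{area}_p(P') = \Theta(\epsilon^p) \#_p(P')$, so $\inf \operatorname{area}_p \leq C\epsilon^p \inf \#_p$. For the reverse inequality, given any smooth $p$-cycle $P$ representing $\alpha$, I would apply the Federer--Fleming deformation theorem cell-by-cell through the Whitehead simplices, producing a homologous simplicial $p$-cycle $P'$ with $\#_p(P') \leq C(d) \operatorname{area}_p(P)/\epsilon^p$. Taking infima yields $\epsilon^p \inf \#_p \leq C(d) \inf \operatorname{area}_p$, so $\inf \operatorname{area}_p / \inf \#_p = \Theta(1)$. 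Since $p$ and $q$ play symmetric roles in the hypothesis, the middle identity follows identically.

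\textbf{Main obstacle.} The essential difficulty is not any individual step but the \emph{uniformity} of constants across the sequence. The implicit bounds in Federer--Fleming, together with the shape-regularity of the Delaunay subdivision, must not degrade as $i$ varies. The appropriate-scaling hypothesis is precisely calibrated so that the curvature and injectivity-radius bounds deliver a single biLipschitz Euclidean model for every cell of every $M_{i,\epsilon}$; this uniformity propagates through the deformation estimate to give constants depending only on $d$, as required. A secondary subtlety is that Federer--Fleming is usually stated for integral currents, so for $K = \Z_2$ I would invoke its mod-$2$ analogue (which holds by the same proof on the space of flat $\Z_2$ chains).
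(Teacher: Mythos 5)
Your proposal follows the same two--step strategy as the paper: build a uniformly shape--regular triangulation from the appropriately--scaled metric, then count top cells for the volume identity and deform cycles into the $p$--skeleton for the cycle identities, finishing with linearization. The difference is mostly in what is made explicit and what is cited.

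For the triangulation, you spell out a Delaunay construction from a maximal $\epsilon$--net, whereas the paper simply invokes ``standard techniques, such as Whitehead's'' and takes the bounded--geometry subdivision as given; your version adds detail the paper leaves to the reader, and the packing argument you sketch is precisely what bounds the local degree. For the deformation step, you invoke the Federer--Fleming deformation theorem (correctly noting that the mod-$2$ flat-chain version is the relevant one), while the paper instead proves its own self-contained pushing lemma (Lemma~\ref{push}), a probabilistic radial-projection estimate, and applies it repeatedly cell-by-cell down the skeleta before linearizing. These are the same mechanism: Lemma~\ref{push} is essentially the engine inside the proof of Federer--Fleming, and the paper re-derives it partly because the same lemma is reused later (e.g.\ in the proof of Theorem~\ref{thm:sectioncurvature} and in Appendix~\ref{ptbap}). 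Citing the standard theorem, as you do, is more economical here; the paper's route buys a reusable self-contained estimate with explicit control of the constants. Both versions need the final ``linearization reduces area'' step (the calibration observation), which you fold into the Federer--Fleming conclusion. Overall your argument is correct and captures the same content, with a tidier reference to standard GMT in place of the paper's bespoke lemma.
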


This theorem allows us to work in P.L.\ or smooth categories according to convenience. Its proof will be given at the end of this section.

To understand $\Z$-systolic freedom, let us recall a basic example and theorem from \cite{gromov96}; also see D of \cite{gromov2007metric} and \cite{babenko1998systolic}.
For $i$ a positive integer, let $M_i=S_i^3 \times \mathbb{R}/(\theta,t)=(\sqrt{i} \circ \theta,t+1)$, where
$S_i^3$ is the $3$-sphere of radius $i$, and the identification matches a point with its $\sqrt{i}$-rotation
along Hopf fiber displaced one unit in the real coordinate.  This family of metrics on $S^3 \times S^1$ is already appropriately scaled.  A simple {\it calibration} argument shows that the $3$-systole is represented by
a fiber $S_i^3 \times t$, $t\in [0,1]$, but the shortest essential $1$-cycle has length $=\Theta(i^{1/2})$.
Notice that with a $1/2$-power $\sqrt{i}$ defining the amount of rotation, minimal $1$-cycles representing
$k[{\rm generator}]$ have minimal length fluctuating between $\Theta(i^{1/2})$ and $\Theta(i)$ as $k$ grows.
Thus we see a kind of $\mathbb{Z}$-systolic freedom:
$$\mathbb{Z}-(3,1)-{\rm SR} = \frac{\Theta(i^3) \cdot \Theta(i^{1/2})}{\Theta(i^3)},
$$
diverging with $i$.

Recalling Definition \ref{defn112}, this example represents power law $\Z$-systolic freedom, with power $\alpha = \frac{1}{6}$. But with $\Z_2$-coefficients, there is \emph{no} systolic freedom in the family. The idea for making low 3-volume $\Z_2$-cycles is to cut out large antipodally located pairs of patches on $S^3$ and heal the resulting boundaries by adding a length one tube $S^2 \times I$ joining the boundary of a patch to the boundary of its antipodal partner. This construction can reduce the area of the essential $P \in H_3(M_i; \Z_2)$ to $O(i^2)$. The first examples of sequences of metrics (the underlying manifold varied) with $\Z_2$-systolic freedom were given in \cite{flm02} and \cite{freedman99}; these examples exhibited poly-log freedom with $\alpha = \frac{1}{2}$.

The example above contrasts a more general absence of systolic freedom in a rational setting, where
``economies of scale" destroy many examples such as $\{M_i\}$ above.

For $[\alpha]\neq 0\in H_k(M;\mathbb{Z})/{\rm torsion}$, define the stable area
\be
{\rm starea}_k(\alpha)=\inf_j \frac{1}{j} \Bigl(\inf \limits_{\beta} {\rm area}_k(\beta)\Bigr),
\ee
where $j=1,2,3,\ldots$ and the inner infimum is over (rectifiable) cycles $\beta$ representing $j[\alpha]$.
Then define the stable systole {\it stsys}:
\be
{\rm stsys}_k=\inf\limits_{[\alpha] \neq 0} {\rm starea}_k(\alpha).
\ee

\begin{thm}
{\bf Gromov}: There is a constant $c(n)$ such that for any Riemannian metric on $M$ diffeomorphic to $S^p \times S^q$,
\be
{\rm vol}(M)={\rm stsys}_n(M)\geq c(n) \cdot {\rm stsys}_p(M) \cdot {\rm stsys}_q(M).
\ee
\end{thm}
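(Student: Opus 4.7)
The plan is to reduce the inequality to Federer's duality between stable systoles and the comass norm on real cohomology, combined with the cup-product structure of $H^*(S^p \times S^q)$. First I would observe that ${\rm stsys}_n(M)={\rm vol}(M)$: the free part of $H_n(M;\Z)$ is generated by the fundamental class, and any rectifiable cycle representing $j[M]$ has $n$-volume at least $j\cdot{\rm vol}(M)$ by a degree argument.

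Next I would invoke Federer--Fleming duality. For any nonzero $[\alpha]\in H_k(M;\Z)/\mathrm{torsion}$,
\[
{\rm starea}_k(\alpha)=\sup_{[\omega]\in H^k(M;\R)}\frac{\langle[\omega],[\alpha]\rangle}{\|\omega\|_\infty},
\]
where $\|\omega\|_\infty:=\sup_{x\in M}\|\omega_x\|^*$ and $\|\cdot\|^*$ is the pointwise comass (the dual norm to mass on simple $k$-vectors). Consequently, for a primitive generator of $H^k(M;\R)$, the quantity ${\rm stsys}_k(M)$ equals the reciprocal of the infimum of $\|\omega\|_\infty$ over closed representatives.

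I would then use that $H^*(S^p\times S^q;\Z)\cong\Z[a,b]/(a^2,b^2)$ with $\deg a=p$, $\deg b=q$, and $a\cup b$ generating $H^n$. Pick closed smooth $\omega_a$ representing $a$ and $\omega_b$ representing $b$ with near-minimal comass; then $\omega_a\wedge\omega_b$ represents $a\cup b$, so
\[
1=\langle a\cup b,[M]\rangle=\int_M \omega_a\wedge\omega_b \leq {\rm vol}(M)\cdot\|\omega_a\wedge\omega_b\|_\infty.
\]
Applying the pointwise sub-multiplicativity of comass, $\|\omega_a\wedge\omega_b\|^*_x\leq C(n)\|\omega_a\|^*_x\,\|\omega_b\|^*_x$, and taking infima over representatives then yields ${\rm vol}(M)\geq c(n)\cdot{\rm stsys}_p(M)\cdot{\rm stsys}_q(M)$ with $c(n):=C(n)^{-1}$.

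The main obstacle is establishing this last pointwise sub-multiplicativity with a controlled dimensional constant $C(n)$. The comass is dual to mass, whose unit ball is the convex hull of unit simple $k$-vectors; because the wedge of two simple vectors of optimal norm need not be simple of the expected norm, one must perform careful linear algebra on $\Lambda^p T_x^*\otimes\Lambda^q T_x^*$. This is precisely where the dimensional dependence of $c(n)$ arises: for decomposable forms the bound holds with $C=1$, but general forms can be far from decomposable, and controlling that gap is the content of Federer's comass--mass theory.
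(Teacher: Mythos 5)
The paper quotes this as Gromov's theorem and does not supply a proof, so there is nothing internal to compare against; I can only evaluate your sketch on its own terms. Your argument is the standard Gromov--Federer one and is correct in substance: (i) ${\rm stsys}_n={\rm vol}(M)$ by the degree argument; (ii) Federer's duality identifying the stable norm on $H_k(M;\R)$ as dual to the comass norm on $H^k(M;\R)$; (iii) the ring structure $H^*(S^p\times S^q;\Z)\cong\Z[a,b]/(a^2,b^2)$ with $\langle a\cup b,[M]\rangle=1$; and (iv) pointwise submultiplicativity of comass. For (iv) you can be completely explicit: if $\xi=v_1\wedge\cdots\wedge v_{p+q}$ is a unit simple vector with the $v_i$ orthonormal, then $(\alpha\wedge\beta)(\xi)$ is a signed sum over $(p,q)$-shuffles of $\alpha(v_I)\beta(v_J)$, each term bounded by $\|\alpha\|^*\|\beta\|^*$, giving $\|\alpha\wedge\beta\|^*\le\binom{p+q}{p}\|\alpha\|^*\|\beta\|^*$; so one may take $c(n)=\binom{p+q}{p}^{-1}$ and the ``careful linear algebra'' you flag as an obstacle is in fact elementary. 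One genuine loose end: when $p=q$ the groups $H_p(M;\Z)$ have rank $2$, so ``reciprocal of the comass of the generator'' is no longer well-posed as written, and one needs a small lattice/dual-norm argument to produce two near-minimizing integral classes whose cup product is nonzero. Gromov's statement covers this case, so a complete proof would address it, but for the paper's application ($S^3\times S^1$, $p\neq q$) the rank-one version you gave applies verbatim.
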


With this general background in place, we embark on a study of $\mathbb{Z}_2$-systolic freedom, based on
$\mathbb{Z}_2$ quantum codes.  Integer (but not rational) coefficients play an important intermediary
role as we need their greater descriptive power to
make manifold constructions.
Here is our main theorem:
\begin{thm}
\label{powerZ2first}
    There is a sequence $\{M_i\}$ of closed, simply connected, stably framed Riemannian $11$-manifolds exhibiting power-law $\Z_2$-(4,7)-systolic freedom. The power is $1+\alpha$, $\alpha = 1$ up to polylogarithmic factors.
\end{thm}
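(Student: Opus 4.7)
The plan is to apply the paper's reverse-engineering procedure — which turns a sparse $\Z$ chain complex into a closed, simply connected Riemannian manifold of bounded local geometry — to the $\Z$-lift of a chain complex coming from a good quantum CSS code. The dimension $11$ will be the thickening dimension for the underlying $2$-complex; the power $1+\alpha$ with $\alpha=1$ will come from using a code of linear distance and constant rate.

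I would begin with a sequence of recently constructed good quantum CSS LDPC codes having constant stabilizer weight, constant rate, and distance $d_i=\Theta(n_i)$. Each code is naturally a length-three $\Z_2$ chain complex $C^{(2)}_{\bullet,i}$ in degrees $0,1,2$ with $|C_j|=\Theta(n_i)$ and both middle-degree $\Z_2$-systole and $\Z_2$-cosystole equal to $\Theta(n_i)$. I would then lift this to a sparse $\Z$ chain complex $C_{\bullet,i}$ by choosing signs on the nonzero entries of the two boundary matrices so that $\partial\circ\partial=0$ integrally; sparsity survives since the original checks have bounded weight, and reducing mod~$2$ recovers the code.

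Feeding $C_{\bullet,i}$ into the reverse-engineering theorem yields a closed, simply connected Riemannian $11$-manifold $M_i$ of bounded local geometry in which a middle-degree chain of the code thickens to a $4$-chain and its Poincar\'e dual to a $7$-chain. Simple-connectivity is baked in by performing $\pi_1$-surgery, during the construction, along every loop in a weakly fundamental cycle basis of the complex's $1$-skeleton in which each edge appears only $\operatorname{polylog}(n_i)$ times; this polylog overlap ensures that the combinatorial volume and both essential cycle areas grow by at most polylog factors at this step. Stable framing is arranged once at the thickening stage, using the freedom to trivialize the normal bundle of a $2$-complex in its ambient thickening, and is preserved by each surgery.

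Working combinatorially via Theorem~\ref{thm:SRconstant}, one then gets $\#_4(P_i)=\Theta(d_i)$, $\#_7(Q_i)=\Theta(d_i)$, and $\#_{11}(M_i)=\Theta(n_i)$, all up to polylog factors, so
\[
    \Z_2\mathdash(4,7)\mathdash\operatorname{SR}_c(M_i)=\frac{\Theta(d_i^2)}{\Theta(n_i)}=\Theta(n_i)=\Theta(\#_{11}(M_i)),
\]
which is power-law freedom with $\alpha=1$ up to polylog, matching the theorem. The principal obstacle is the reverse-engineering theorem itself: one has to produce an $11$-manifold whose \emph{geometric} systoles genuinely match the \emph{combinatorial} systoles of the code rather than being shortcut by the thickening. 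A secondary obstacle is the polylog cycle-basis step — a naive spanning-tree fundamental basis can place a single edge in linearly many loops and would collapse $\alpha=1$ outright.
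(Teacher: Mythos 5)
Your proposal follows the same overall route as the paper: pick a good quantum LDPC code, sparsely lift its $\Z_2$ chain complex to $\Z$, feed the lift into the manifold construction, kill $\pi_1$ by attaching $2$-handles along a weakly fundamental cycle basis with polylog edge-multiplicity, and then compare combinatorial and Riemannian systoles via Theorem~\ref{thm:SRconstant} and Lemma~\ref{lm:47systolic}. The arithmetic at the end, $\Theta(d_i^2)/\Theta(n_i)$, and your observation that a naive spanning-tree fundamental basis can place a single edge in linearly many loops, are both aligned with the paper.

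However, there is a genuine gap at the lifting step. You write that one can lift ``by choosing signs on the nonzero entries of the two boundary matrices so that $\partial\circ\partial=0$ integrally'' and that ``sparsity survives since the original checks have bounded weight.'' This is not something one can do for an arbitrary bounded-weight CSS code. The requirement is that the two chosen sign matrices satisfy $\tilde\de^\pr\tilde\de = 0$ over $\Z$, and there is no a priori reason a $\{0,\pm1\}$-valued solution exists. In fact the paper explicitly treats sparse liftability as a nontrivial constraint (Definition~\ref{liftabledef}) and \emph{conjectures} that many sparse $\Z_2$ complexes admit no sparse $\Z$ lift at all; and your proposed lift, which keeps entries in $\{0,\pm1\}$, is even more restrictive than a general sparse lift allowing bounded-magnitude integers. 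What makes the paper's argument go through is that the specific codes used — the fiber bundle codes of~\cite{HHO}, and equivalently the codes of~\cite{LD}/\cite{BE}, which are circle-fiber bundle codes — inherit a sparse lift from the standard sign rule for a (twisted) product chain complex, namely Eq.~(\ref{bdefZ}). Without pointing to a structural reason (fiber bundle, homological product, or similar) why your chosen codes are sparsely liftable, the construction of the integral complex that the rest of the argument consumes does not get off the ground, and the unstated assumption that every LDPC code is liftable by mere sign choice is false.

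A smaller point: you describe the manifold as a ``thickening'' of a $2$-complex in dimension $11$. The paper is careful that a naive thickening fails — the attached $2$- and $3$-cells need not be balls, producing spurious homology — and instead builds a handlebody in which $X$-stabilizers, qubits, and $Z$-stabilizers become $3$-, $4$-, and $5$-handles (doubled to supply the $6$-, $7$-, $8$-handles). You're deferring this to the ``reverse-engineering theorem'' as a black box, which is fine for a sketch, but it's worth knowing the direct thickening is exactly the approach the paper rejects.
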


We will now describe a general procedure, clarifying work in \cite{bh}, for turning a class of quantum codes, called CSS after the authors of [CSS], into manifolds. When this procedure is applied to certain recently developed CSS codes \cite{HHO}, the result will be, up to polylogarithmic factors, a sequence of Riemannian manifolds $\{M_i\}$ exhibiting power law $\Z_2$-systolic freedom with $\alpha = \frac{1}{4}$, given the distance of codes found in that work\footnote{The work constructs codes on $n$ qubits with distances (up to polylogarithmic factors) $n^{3/5}$, where the distance is the minimum weight of a nontrivial homology or cohomology representative.  This would gives only $\alpha=\frac{1}{5}$; however, these codes result from ``distance balancing" codes with distances $d_X=n^{1/2},d_Z=n^{3/4}$, up to polylogarithmic factors.  Thus, before balancing, one gets $\alpha=1/4$ up to logarithmic factors.}. This is the first example of power law $\Z_2$-systolic freedom. For clarity and simplicity we construct the sequence $\{M_i\}$ to be closed, simply connected, 11-manifolds. It is easy to push up the dimension. We will discuss whether the dimension can be lowered near the end of this section.

We will also apply this procedure to hypergraph product codes\cite{tillich2013quantum} later in the paper.

The quantum code should be a low-density parity check (LDPC) code, as defined below; this is needed to control the local geometry of the manifold we build.
The construction will require a particular further assumption on the class of codes, requiring that the code is what we term {\it sparsely liftable}, as defined below in definition \ref{liftabledef}.

Remark added: after the first version of the present paper appeared, the distance record for LDPC quantum codes was increased to give codes on $n$ qubits
with almost linear distance $d_X = d_Z =\Omega( n/\log(n))$.  See \cite{LD}; see also \cite{BE} for a derandomization of \cite{HHO} and a unified viewpoint relating the codes of \cite{LD,HHO}.  Applying our procedure to those codes gives power law $\Z_2$ systolic freedom for $\alpha=1$ up to logarithmic factors.

\subsection{CSS codes}
Recall a CSS code $\mathcal{C}$ [CSS] is given on $q$-qubits $Q_1, \dots, Q_q$, with $X$-stabilizers\footnote{We use the term ``stabilizer" as shorthand for ``stabilizer generator" throughout.} and $Z$-stabilizers $\{X_{i_1} \otimes \cdots \otimes X_{i_j}\}$, $1 \leq i_1 < \cdots < i_j \leq q$; and $\{Z_{k_1} \otimes \cdots \otimes Z_{k_l}\}$, $1 \leq k_1 < \cdots k_l \leq q$. It is required that all stabilizers commute (which is nontrivial only between the $X$- and $Z$-stabilizers).

As usual $X_i$ and $Z_i$ denote the operators

\begin{center}
\begin{tikzpicture}
    \node at (0,0) {$X_i =\quad \begin{vmatrix} 0 & 1 \\ 1 & 0 \end{vmatrix}$ on $Q_i$, and $Z_i = \quad \begin{vmatrix} 1 & 0 \\ 0 & -1 \end{vmatrix}$ on $Q_i$};
    \node at (1.3,-0.25) {\scriptsize{$|1\rangle$}};
    \node at (1.3,0.25) {\scriptsize{$|0\rangle$}};
    \node at (1.9,0.7) {\scriptsize{$|0\rangle$}};
    \node at (2.5,0.7) {\scriptsize{$|1\rangle$}};
    \node at (-2.9,-0.25) {\scriptsize{$|1\rangle$}};
    \node at (-2.9,0.25) {\scriptsize{$|0\rangle$}};
    \node at (-2.4,0.7) {\scriptsize{$|0\rangle$}};
    \node at (-1.9,0.7) {\scriptsize{$|1\rangle$}};
\end{tikzpicture}
\end{center}

It is well-known in quantum coding theory that the condition that the $X$- and $Z$- stabilizers commute, i.e.\ that given any $X$- and any $Z$-stabilizer the number of common indices is even, means that a \emph{chain complex} with three different degrees may be defined obeying $\de^\pr \circ \de = 0$:
\begin{equation}\label{eq:chaincomplex}
    \mathcal{C} = \Z_2^z \xrightarrow{\de} \Z_2^q \xrightarrow{\de^\pr} \Z_2^x
\end{equation}
where $z = \#(Z\text{-stabilizers})$, $q = \#(\text{raw qubits})$, and $x = \#(X\text{-stabilizers})$. 
The three different degrees are given preferred bases, correspondingly respectively to $Z$-stabilizers, qubits, and $X$-stabilizers.
The maps $\de^\pr$
and $\de$ are defined 
by their matrix elements $(\de^\pr)_{ij},(\de)_{ij}$ in this basis by:
\begin{equation}
\begin{split}
    &  (\de^\pr)_{ij} =
    \begin{cases}
        0 & \text{if } X_j \text{ does not occur in the }i\text{th } X\text{-stabilizer} \\
        1 & \text{if } X_j \text{ occurs in the }i\text{th } X\text{-stabilizer}
    \end{cases} \\
    & \text{for } 1 \leq i \leq x \text{ and } 1 \leq j \leq q \text{, and} \\
    &  (\de)_{ij} =
    \begin{cases}
        0 & \text{if } Z_i \text{ does not occur in the }j\text{th } Z\text{-stabilizer} \\
        1 & \text{if } Z_i \text{ occurs in the }j\text{th } Z\text{-stabilizer} \\
    \end{cases} \\
    & \text{for } 1 \leq i \leq q \text{ and } 1 \leq j \leq z
\end{split}
\end{equation}
See \cite{bh} for a review and for a ``dictionary" mapping between these languages: for example, so-called ``$Z$-logical operators" in quantum coding theory which act nontrivially on encoded qubits correspond to homology representatives while ``$X$-logical operators" correspond to cohomology representatives.

Let us define $f(i)$ to be the number of $X$-stabilizers containing $X_i$, i.e., the number of ones in the $i$-th column of $\de^\pr$.

The condition that $\mathcal{C}$ is a low-density parity check code (LDPC) says that there are only $O(1)$ nonzero entries in every row and every column of $\de$ and in every row and every column of $\de^\pr$.

Our construction will build a manifold starting with a chain complex over $\Z$, rather than $\Z_2$.  So, we define lifts as follows.
We define a ``lift" of a matrix $M$ of entries in $\Z_2$ to be a matrix $\tilde M$ over $\mZ$ with the same number of rows and columns such that each entry of $\tilde M$ is equal, mod $2$, to the corresponding entry of $M$.

Of course every matrix has some lift that preserves sparsity: imply replace each $0$ or $1$ in $\F_2$ with $0$ or $1$ correspondingly in $\mZ$.  We call this the ``naive lift".

\begin{definition}
Given a chain complex over $\Z_2$, defined by boundary operators $\de_j:{\mathcal A}_j \rightarrow {\mathcal A}_{j-1}$ for some sequence of ${\mathcal A}_j$ (in the above example, $3$ successive values of $j$ corresponding to $z,q,x$),
where the boundary operators are regarded as finite matrices with entries $0,1$,
a {\it lift} of that chain complex is a chain complex over $\Z$ defined by a sequence of boundary operators $\tilde \partial_j$,
such for all $j$, $\tilde \partial_j$ is a lift of $\partial_j$.
\end{definition}

\begin{definition}
\label{liftabledef}
A chain complex over $\Z_2$ is {\it liftable} if it admits a lift.  Such a chain complex is {\it sparsely liftable} if it admits a
lift such that for every row and every column of every boundary operator of the lifted complex, the sum of absolute values of entries of that row or column is $O(1)$.
\end{definition}

{\bf Notation:} let us use $\tilde f(i)$ to denote the sum of absolute values of the $i$-th column of $\de^\pr$.  Sparseness of the lift implies $\tilde f(i)=O(1)$.

As mentioned, every matrix (such as $\de$) has some lift, such as the naive lift above.  However, it is not obvious that a pair of boundary operators admit a lift so that $\tilde \de^\pr \tilde \de=0$ over integers.  In fact, we will show in section \ref{liftingsection} that such lifts do always exist (all complexes are liftable), but we conjecture that for many LDPC codes, no {\it sparse} lift exists.

In all that follows in the rest of this section, the chain complexes used to define the manifold are obtained from
a lift (or, better, a sparse lift).  We will omit the use of the tilde (i.e., $\tilde f(i)$, $\tilde \de$, or $\tilde \de^\pr$) and simply use $\de$ or $\de^\pr$ to implicitly refer to lifted boundary operators.

As better quantum codes are found, stronger systolic inequalities will follow.  So let us state a general
template of which Theorem \ref{powerZ2first} is a special case.

\begin{thm}
\label{generaltemplate}
Given a sparse $\mathbb{Z}_2$-chain complex $E_2 \rightarrow E_1 \rightarrow E_0$,
our input LDPC quantum code, and a sparse lift to an integral chain complex
$\tilde E_2 \rightarrow \tilde E_1 \rightarrow \tilde E_0$, there is a simply connected, closed,
orientable $11$-manifold which may be taken to be PL triangulated, smooth, or both (using a Whitehead
compatible triangulation) so that:
\begin{itemize}
\item[(1):] The triangulation has bounded local degree (or in Riemannian terms, the metric is appropriately scaled).

\item[(2):] The number of simplices (or Riemannian volume) is $\Theta(\sum_i {\rm dim}(E_i))$ up to polylogs.

\item[(3):] The mod $2$ $4$-systole of $M$ is $\Theta({\rm sys}_1(E))$, in either the PL or Riemannian sense.

\item[(4):] The mod $2$ $7$-systole of $M$ is $\Theta({\rm cosys}_1(E))$, in either the PL or Riemannian sense.

\item[(5):] In fact, $M$ can be given a handle decomposition so that the portion of the cellular chain
complex:
$${\rm span}_{\mathbb{Z}}(5-{\rm handles}) \xrightarrow{\rm attaching}
{\rm span}_{\mathbb{Z}}(4-{\rm handles}) \xrightarrow{\rm attaching}
{\rm span}_{\mathbb{Z}}(3-{\rm handles})$$
is isomorphic to
$\tilde E_2 \rightarrow \tilde E_1 \rightarrow \tilde E_0$.
\end{itemize}

Furthermore if it is desired to map the code to a manifold of higher dimension, $11+d$, then if we can write $d\geq 0$ as $d=p+q$, $p,q\geq 0$, then in all statements above we may replace:
$$11\rightarrow 11+d, \,
3\rightarrow 3+p, \,
4\rightarrow 4+p, \,
5 \rightarrow 5+p,\,
7\rightarrow 7+q.$$
\end{thm}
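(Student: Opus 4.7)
The plan is to realize $M$ via an explicit handle decomposition whose cellular chain complex, in degrees $3,4,5$, agrees with the given sparse integral lift $\tilde E_2 \to \tilde E_1 \to \tilde E_0$; then close the manifold with dual (high-index) handles, trivialize $\pi_1$ with $2$-handles, and verify the systolic equalities. The dimension-varying addendum will come at the end via product stabilization.

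For the core construction, I would begin with $S^{11}$ and attach one $3$-handle $D^3 \times D^8$ per generator of $\tilde E_0$ (equivalently, connect-sum with copies of $S^3 \times S^8$), producing a simply connected $N_0$ whose cocores $\{D^8\}$ correspond to the preferred basis of $\tilde E_0$. For each generator of $\tilde E_1$, attach a $4$-handle along an embedded $3$-sphere in $\partial N_0$ whose signed geometric intersection numbers against the $3$-handle cocores realize the prescribed column of $\tilde\partial_1$; sparsity of the lift means each such sphere links only $O(1)$ meridians, and since $3+8=11$ the intersections are zero-dimensional, so an explicit local model of bounded complexity suffices. Attach $5$-handles analogously along embedded $4$-spheres realizing $\tilde\partial_2$. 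Here the relation $\tilde\partial_1 \tilde\partial_2 = 0$ is exactly what allows such a $4$-sphere, after algebraic cancellation across the $4$-handles it is meant to bound over, to be pushed off the $3$-handle regions and realized by an honest embedding; the codimension $11-4-4=3$ makes general position robust, and sparsity again keeps local complexity bounded. Finally, close up by attaching dual $6,\dots,11$-handles in a standard way that leaves the $3,4,5$-portion of the chain complex untouched, establishing condition (5).

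The principal obstacle is trivializing $\pi_1$ without destroying condition (1). The partially assembled manifold has fundamental group generated by short loops indexed by edges of the bipartite graph whose vertices are $3$- and $4$-handles, modulo relations from the $5$-handle attaching $4$-spheres. A naive choice of killing disks could route a single edge through many $2$-handles, blowing up local degree, and this is precisely the issue addressed by the weakly fundamental cycle basis result announced in the abstract: it provides a presentation in which each edge appears in only polylogarithmically many relators, so the $2$-handles trivializing $\pi_1$ can be attached with polylog overhead in total volume while maintaining bounded local geometry, yielding (1) and (2) up to polylogs. Once the topology is pinned down, the systolic equalities (3) and (4) follow by pairing an upper bound---lift a minimum-weight $\Z_2$-cycle in $E$ at degree $1$ to a cellular $4$-chain in $M$ by capping off the corresponding $4$-handle cores in the $3$-handles, dually for cocycles and $7$-chains---with a lower bound obtained by using the Morse/deformation-retract structure of the handle decomposition and transversality to push any essential $\Z_2$-cycle in $M$ onto the cellular skeleton, losing only a $\Theta(1)$ factor in weight per handle by Theorem~\ref{thm:SRconstant}. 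Poincar\'e--Lefschetz duality interchanges the $4$- and $7$-systole arguments. The dimension-stabilized statement comes from carrying out the entire construction inside $S^{11+p+q}$ with $(3+p)$-, $(4+p)$-, $(5+p)$-handles and $q$ extra transverse dimensions, which shifts all indices as stated and preserves every estimate.
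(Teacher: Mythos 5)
Your proposal gets the broad goals right—realize the lifted chain complex in the middle handle degrees, close up, kill $\pi_1$ with $2$-handles governed by the decongestion lemma, and compare systoles by pushing cycles to the skeleton—but the opening move breaks condition (1) in a way the paper specifically designs around, and this in turn makes the $\pi_1$ discussion incoherent.

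\paragraph{The congestion problem.} You begin with $S^{11}$ (a single $0$-handle) and connect-sum with $x$ copies of $S^3 \times S^8$. But the whole point of asking for bounded local geometry is that no handle should have $\omega(1)$ other handles attached to it. A single $0$-handle that receives \emph{every} $3$-handle fails this immediately: its boundary sphere has $x = \Theta(n)$ disjoint $S^3 \times D^8$ attaching regions, so the handle structure (and any Whitehead triangulation refining it) has a vertex of unbounded degree. The paper avoids this by \emph{not} connecting the $3$-handles at the start: it begins with $x$ disjoint $0$-handles, one per $X$-stabilizer, each receiving exactly one $3$-handle, so the initial piece $X$ is a disjoint union $\coprod_{j=1}^x (S^3 \times D^8)_j$. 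The pieces are only connected later by the ``$4$-handles'' (qubits), and sparsity of $\partial'$ guarantees this connecting is $O(1)$-local. This disconnected start is not a stylistic choice—it is the mechanism that makes condition (1) achievable.

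\paragraph{The $\pi_1$ discussion does not match your starting point.} If you really start from $S^{11}$ and attach only handles of index $\geq 3$ (then close with high-index handles), the resulting manifold is automatically simply connected—there is nothing to kill, and the decongestion lemma plays no role. The nontrivial $\pi_1$ that the decongestion lemma is designed to trivialize arises precisely because the paper starts with disconnected pieces that the qubit handles stitch together (through the punctured-$S^4$ ``$4$-handles'' with their internal $1$-handles, and the punctured-$S^5$ ``$5$-handles'' with their internal $1$- and $2$-handles). So your invocation of the weakly fundamental cycle basis is correct in spirit but is floating free of the construction you actually wrote down: to need it, you must first adopt the disconnected start.

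\paragraph{A smaller but real concern about genuine $4$- and $5$-handles.} You propose attaching a single $4$-handle per qubit and a single $5$-handle per $Z$-stabilizer, along honest embedded spheres. The paper instead uses ``handles'' that are small handlebodies: punctured $S^4 \times D^7$ for qubits (with internal $1$-handles to join the $f(i)$ boundary components), and punctured $S^5 \times D^6$ for $Z$-stabilizers (with internal $1$- and $2$-handles determined by a graph $G$ and an Euler-cycle pairing). The reason is that the natural $4$-cycle representing a qubit should be a closed $S^4$, and the $4$-chain $\tilde\partial_2 Z^k$ is generally a disjoint union $\coprod (\#\, S^1 \times S^3)$, not a single $S^4$—even if homologically you can represent it by a sphere, doing so \emph{with bounded local complexity tied to the sparse matrix data} is exactly what the punctured-sphere models buy you for free. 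Your appeal to general position gives an embedding homologically, but it does not by itself control local degree, which is the whole nontrivial content of condition (1).

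Fixing this requires replacing your first step (disjoint $0$-handles, one $3$-handle each), switching to the punctured-sphere ``$4$-'' and ``$5$-handles'' with their internal handle structures, and only then does the decongestion lemma become both necessary and applicable. The systolic comparison sketch and the dimension-stabilization addendum you give at the end are fine once the construction is repaired.
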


\subsection{Bounded Local Geometry}
Our construction will give a handle decomposition of some manifold.
We wish our construction to have
the following desirable feature $(*)$: if the original code is LDPC, then the resulting decomposition has a {\it bounded local geometry}, meaning that only $O(1)$ handles are attached to any given handle.
If a manifold does not have bounded local geometry, we say that it is {\it congested}.

The requirement that the handle decomposition have bounded local geometry, so that the manifold admits a bounded Riemannian metric, explains some of the unusual features of our construction below.  In particular, the usual start to a handle decomposition of a connected manifold --- a single $0$-handle --- will lead to congestion as every other handle would make contact with it.

\subsection{Naive Attempt}
The reader might be surprised that our construction builds an eleven dimensional manifold, rather than some lower dimensional object.  Indeed, in \cite{bh}, a procedure to build a seven manifold was sketched.  Let us review this procedure, which we describe as a ``naive" version of the present construction.  In this naive procedure, we will give a cell decomposition, rather than a handle decomposition, and then thicken the cell decomposition to obtain a manifold.  We define bounded local geometry for a cell complex in the obvious way, meaning that every cell attaches to $O(1)$ cells, and define $(*)$ similarly for a cell decomposition.

We will explain some issues that this naive procedure encounters, and explain that these issues become more severe
when one attempts to ensure feature $(*)$. To resolve these issues, while also having this  feature $(*)$, we will need to increase the dimension of the cells used in the construction. 

The basic idea of both the naive construction, and the construction here, is to fix some $j$ and
identify $(j-1)$-cells  with $X$-stabilizers,
identify $j$-cells with qubits,  and identify $(j+1)$-cells with $Z$-stabilizers, and use the boundary operators to give rules for attaching the cells to each other.

The smallest $j$ that one might imagine using is $j=1$.  In this case, we immediately encounter a problem:
a $1$-cell can have at most two $0$-cells in its boundary, which limits the possible maps $\de^\pr$ to matrices where each column has at most two nonzero entries.

In this regard, let us remark on the term ``cell".  In this section of the paper, a ``cell" will refer to a particular topological space.  A cell should be a ball, however we will sometimes put quotes around the word, i.e. ``cell", to refer to some object that is not a ball.  However, in section \ref{liftingsection}, we will use the term cell more abstractly, to refer to a particular basis element of a vector space.

So, one might then turn to $j=2$, which is what was suggested in \cite{bh}.  
Then, we have one $1$-cell for each $X$-stabilizer.  There it was suggested to then use the boundary operators to give a rule
to attach $2$-cells for qubits and $3$-cells for $X$-stabilizers, giving a $3$-complex.  Then, this complex can be embedded in $7$-dimensions, thickened to give a manifold with boundary, and attached to a copy of itself to give a closed manifold.

Let us see what happens.
There are a few possible choices already, even before attaching $2$-cells.  We might have a single $0$-cell, and attach all the $1$-cells to that $0$-cell, giving a bouquet of circles with one $S^1$ for each $X$-stabilizer.  Unfortunately, this will not give feature $(*)$: the $0$-cell has {\it all} of the $S^1$ attached to it, rather than having only $O(1)$ cells attached to it.  Nevertheless, let us continue.  There is no difficulty attaching $2$-cells.  Each $2$-cell attaches to some combination of the $1$-cells, which can be itself chosen to be spherical (i.e., $S^1$) so a $2$-ball can attach to that boundary.  However, when attaching $3$-cells, some difficulty may arise.  Since $\de^\pr \de=0$, the ``$3$-cell" should attach to some closed $2$-chain.  However, we do not in general know that this $2$-chain is spherical.  Thus, the ``$3$-cell" might not be a ball.  This can lead to some spurious homology appearing: the manifold might have extra second homology compared to the homology of the qubits of the chain complex.

Rather than dwelling on this issue (which we will resolve by going to larger $j$), let us also consider the issue that this construction does not give $(*)$.  A different approach then is to 
start with ``free-floating" $S^1$, with one $S^1$ for each $X$-stabilizer.  These $S^1$ are not attached to a fixed $0$-cell, so that before attaching $2$-cells the $S^1$ are disconnected from each other.  Now  attach the ``$2$-cells".  Now the issue of spurious homology occurs already at this step.  To give an example, suppose we have two qubits, labelled $1,2$, and two $X$-stabilizers, both of the form $X_1 X_2$.  There are no $Z$-stabilizers.  In this case, the matrix $\de^\pr$ is
$$\begin{pmatrix} 1 & -1 \\ 1 & -1\end{pmatrix},$$
where we have have chosen some particular lift.
Then, we have two $S^1$, and each ``$2$-cell" attaches to both $S^1$.  Unfortunately, the ``$2$-cells" are not balls but rather annuli.  With the choice of signs, this gives a torus $T^2$, which has Betti number $b_1=2$.  This is some spurious homology: the first Betti number should correspond to the $X$-homology of the chain complex, i.e., the redundancy of the $X$-stabilizers, and so it should instead be $1$.
This problem is not so serious yet, but after further steps of the construction it can also lead to spurious second homology, which may destroy the systolic freedom\footnote{Yet another attempt would be to start with a bounded degree tree graph, with one leaf per $X$-stabilizer and attach $S^1$ to the leaves; this will typically not give feature $(*)$ when attaching $2$-cells since they will need to ``stretch across the tree" and there may be many cells attached to a given edge.}.

Our approach below will mimic the ``free floating" construction, starting with disconnected $S^{j-1}$ for $j \geq 4$, one such $S^{j-1}$ for each $X$-stabilizer.  By taking the larger $j$, we will be able to avoid all these issues and get $(*)$.  We will in some cases use ``cells" rather than cells, but we will be able to add extra lower dimensional cells to turn the higher dimensional ``cells" into genuine cells.

Also, rather than working with thickenings of a cell complex, we directly give a handle construction of a manifold which allows us to more easily identify this cell decomposition.

\subsection{Building a $2j+3$-dimensional $M$ from $\mathcal{C}$, $j \geq 4$}
For specificity of visualization (do we visualize 11D manifolds?) in this subsection we will just set $j=4$ and build a $2 \cd 4 + 3 = 11$-manifold. So in the construction $3 = j-1$, $4 = j$, $5 = j+1$, larger integers, 6 and higher, are interpreted dually so, for example, 7 should be thought of as $2j+3-4$, indeed 4 and 7 cells are in dual dimensions. It is hoped that this (unnecessary) specificity will save the reader some mental arithmetic.

The input to the construction is a sparse integral lifting $\tilde C$ of a CSS code, but following our convention we will suppress the tilde.  $\Z_2$ information alone is not adequate to specify attaching maps nor to ensure the orientability of the $4$-cycle we build.

An 11D $k$-handle is a pair $(D^k \times D^{11-k}, \de D^k \times D^{11-k})$

\begin{figure}[ht]
    \centering
    \begin{tikzpicture}
        \draw (-3,-1) rectangle (3,1);
        \draw[very thick] (-3,-1) -- (-3,1);
        \draw[very thick] (3,-1) -- (3,1);
        \draw (0,-1) -- (0,1);
        \draw (-3,0) -- (3,0);
        \node at (1,0.5) {$\leftarrow$co-core};
        \node at (1.4,-0.5) {core};
        \draw[->] (0.95,-0.5) to [out=155,in=-80] (0.8,-0.2);

        \draw [decorate,decoration={brace,amplitude=5pt}](3.4,1) -- (3.4,-1) node [black,midway,xshift=1.8em,yshift=2pt] {$D^{11-k}$};
        \node at (6.5,0.25) {$0 \times D^{11-k}$ is called};
        \node at (6.5,-0.25) {the \emph{co-core}};
        \node at (6.5,-0.75) {$D^k \times 0$ is the \emph{core}};

        \draw [decorate,decoration={brace,amplitude=5pt}](3,-1.2) -- (-3,-1.2) node [black,midway,yshift=-12pt] {$D^k$};

        \node at (-4.4,-1.9) {$\de D^k \times D^{11-k}$};
        \node at (-4.4,-2.4) {called \emph{attaching region}};
        \draw[->] (-4.2,-1.7) -- (-3.2,-0.8);
        \draw [->] (-3.7,-1.25) to [out=90,in=190] (-2.4,1.5) to [out=10,in=170] (2.4,1.5) to [out=-10,in=55] (3.2,0.8);
        \node at (-8,0) {$\vspace{0.5em}$};
    \end{tikzpicture}
    \caption{}
    \label{fig1}
\end{figure}

Handlebodies are unions of a succession of handles where each handle is attached (or ``glued") along its attaching region to the previous union of handles.
Handlebodies are built by starting with 0-handles (all handles for some time now will be 11-dimensional) which are just 11-cells, $(0 \times D^{11},\varnothing)$ with \emph{empty} attaching region. That is why one starts with 0-handles: you don't need anything to attach them to! Then the usual procedure is to step by step attach handles of increasing indices, always to the boundary of what has been previously constructed until one terminates by attaching 11-handles $(D^{11} \times 0, S^{10} \times 0)$, which have empty \emph{co-attaching region} $D^k \times \de D^{11-k}$ to close the 11-manifold.

Handlebodies are a good way to build and understand smooth manifolds. To keep things smooth the attaching is always by diffeomorphism from attaching region to its image. The technology also works well in the P.L.\ category.

In our exposition we will build a 5-handlebody $H$, meaning a union of 11D handles of indices 0, 1, 2, 3, 4, and 5. Then $M = DH$ is obtained as the double of $H$: two copies of $H$ (one with reversed orientation) glued together along $\de H$ by $\id_{\de H}$. By duality, in the upside down copy $k$-handles become $(11-k)$-handles, so all indices $0, 1, \dots, 11$ occur in our description of $M$. Two aspects will be a little unusual: We will not attach the handles strictly in index order, we will do indices 0 and 3 before later getting to 1 and 2. Also we will talk about attaching ``4-handles'' and ``5-handles.'' The scare quotes are a warning to the reader that these are not handles but themselves small handlebodies containing handles of lower indices, which later we will separate into their constituent handles. However, when first presented, we will not see their internal handle structures, we will see them as $(N^4 \times D^7, \de N^4 \times D^7)$ or $(N^5 \times D^6, \de N^5 \times D^6)$ for manifolds with boundary $N^4$ or $N^5$. Later choosing a handle structure on $N^4$ or $N^5$ (and crossing this with $D^7$ or $D^6$) will yield the ``small handlebodies'' mentioned above.

Finally, let us explain an important general principle that ``the
construction is local" that enables us to preserve bounded local geometry through all
steps of this construction.
Our construction goes through several steps, as we attach additional ``handles", decompose ``handles" into handles, and (after forming the double) re-order the attachment of handles by index. 
After every step, the local geometry will be bounded.
On steps where we attach a ``4-handle" or ``$5$-handle", which will correspond to a basis vector of $C$ (i.e., a qubit or $Z$-stabilizer), the
rule for the attaching map will depend only on nonzero matrix elements of $\partial'$ or $\partial$ in the given column, as well as on the  (previously constructed) local geometry of the ``handles" corresponding to basis vectors in the boundary of the given basis vector. 
If the lifted $\partial'$ and $\partial$ are sparse, then there are only $O(1)$ possible choices of local geometry then that one need consider, and so we may choose the local geometry for the attached ``handle" to be bounded.
On other steps, the local geometry after the step is determined by the local geometry before the step and hence again must remain bounded.

So let us begin with $x$ 0-handles, one for every $X$-stabilizer. Now attach to each of these a single 3-handle. The result so far is $(0,3)$-handlebody diffeomorphic to $\coprod_{j=1}^x (S^3 \times D^8)_j$. This space, call it $X$, represents in grade 3 the rightmost chain group on line \ref{eq:chaincomplex}.

\begin{figure}[ht]
    \centering
    \begin{tikzpicture}
        \draw (0,0) circle (0.8);
        \draw (0,0) circle (1.3);
        \draw (-3.5,0) circle (0.8);
        \draw (-3.5,0) circle (1.3);
        \draw (4.5,0) circle (0.8);
        \draw (4.5,0) circle (1.3);
        \node at (2.25,0) {$\dots$};
        \draw [decorate,decoration={brace,amplitude=10pt}](5.6,-1.3) -- (-4.7,-1.3) node [black,midway,yshift=-18pt] {$X = x$ copies of $S^3 \times D^8$};

        \draw (0,0.8) -- ++ (90:0.5);
        \draw (0,-0.8) -- ++ (270:0.5);
        \draw (0.8,0) -- ++ (0:0.5);
        \draw (-0.8,0) -- ++ (180:0.5);
        \draw (0.566,0.566) -- ++ (45:0.5);
        \draw (0.566,-0.566) -- ++ (-45:0.5);
        \draw (-0.566,0.566) -- ++ (135:0.5);
        \draw (-0.566,-0.566) -- ++ (225:0.5);

        \draw (-3.5,0.8) -- ++ (90:0.5);
        \draw (-3.5,-0.8) -- ++ (270:0.5);
        \draw (-2.7,0) -- ++ (0:0.5);
        \draw (-4.3,0) -- ++ (180:0.5);
        \draw (0.566-3.5,0.566) -- ++ (45:0.5);
        \draw (0.566-3.5,-0.566) -- ++ (-45:0.5);
        \draw (-0.566-3.5,0.566) -- ++ (135:0.5);
        \draw (-0.566-3.5,-0.566) -- ++ (225:0.5);

        \draw (4.5,0.8) -- ++ (90:0.5);
        \draw (4.5,-0.8) -- ++ (270:0.5);
        \draw (4.5+0.8,0) -- ++ (0:0.5);
        \draw (-0.8+4.5,0) -- ++ (180:0.5);
        \draw (0.566+4.5,0.566) -- ++ (45:0.5);
        \draw (0.566+4.5,-0.566) -- ++ (-45:0.5);
        \draw (-0.566+4.5,0.566) -- ++ (135:0.5);
        \draw (-0.566+4.5,-0.566) -- ++ (225:0.5);
    \end{tikzpicture}
    \caption{}
\end{figure}

Now for each qubit $Q_i$, $1 \leq i \leq q$, prepare a ``4-handle'' to attach to $X$ according to $\de^\pr$. Specifically, for the $i$th qubit $Q_i$, $1 \leq i \leq q$, let $N_i = S^4 \setminus \operatorname{int}(f(i)\text{ disjoint 4-balls})$. Form ``4-handles'' $\{(N_i \times D^7,\de N_i \times D^7)\}$, $1 \leq i \leq q$, and attach these via disjoint embedding of the attaching regions into $\de(X) \cong \coprod_{j=1}^x S^3 \times S^7$. Each embedding $(S^3 \times D^7)_{i,m} \hookrightarrow (S^3 \times S^7)$, $1 \leq i \leq q$, $1 \leq m \leq f(i)$, should have homological degree $= \pm 1$ in $H_3(S^3 \times S^7,\Z)\cong \Z$, so that
taken together these degrees specify the lifted boundary map $\de$.
For example, if the lifted boundary map sends $Q_i$ to $2X_1 +3X_2$, $N_i$ would be $S^4$ with five balls removed, and two of the resulting boundary spheres would glue to $X_1$ and three would glue to $X_2$.

In case the reader is wondering, in these dimensions there is no knotting or linking of 3-spheres so the smoothing isotopy class of the core 3-spheres is determined by these $\pm 1$ choices. There is an additional $\pi_3(\operatorname{SO}(7)) \cong \Z$ normal framing choice for each embedded $S^3$. This is specified by choosing specific tangential framing for all the 11-manifold pieces and requiring that all ``handle'' attachments are compatible with these framings. The reason for specifying framings is twofold.  First, it makes
the manifold an explicit function of the lifted quantum code.  Second, our framing choices yield an $M^{11}$
with stably framed tangent bundle.  This facilitates the analysis of diffeomorphism type carried out in section \ref{difftypesec}.

Specifically, write $S^3 \times D^8 \cong S^3 \times I \times D^7$, frame the tangent bundle $\tau(S^3 \times I)$ by regarding it as a collar on $\de D^4$ and restricting the standard coordinate frame on $D^4$ to $S^3 \times I$. Then extend this to an 11-frame on $\tau(S^3 \times I \times D^7)$ by adding the standard coordinate frame on $D^7$ as the last seven of the 11 frame vectors. Similarly, write $N_i \times D^7 \cong N_i \times I \times D^6$.  
Regard each component $\partial N_i \times I$ as a collar on the boundary of one of the $f(i)$ deleted $4$-balls
$D^4$
by starting with the embedding $\de N_i \subset S^4$, and extending the interval coordinate radially. Again restrict the coordinate frame on $D^6$ to $N \times I$ and extend with the coordinate frame on $D^6$. We require that when ``handle'' attachments are \emph{sunken into the bulk} the framing must match (up to homotopy). Smooth manifolds are defined through overlapping charts, a similarly convenient way to define smooth structures---and in the present case---also tangential framings is to extend handle attachment to a gluing of overlapping bulks as in Figure \ref{fig:sunkenbulk}.

\begin{figure}[ht]
    \centering
    \begin{tikzpicture}
        \draw (2.5,0) ellipse (1.5 and 1);
        \draw (-2.5,0) ellipse (1.5 and 1);
        \draw (-3.3,0.83) to[out=100,in=180] (-2.5,2.3) to[out=0,in=80] (-1.7,0.83);
        \draw (-2.92,0.95) to[out=100,in=180] (-2.5,1.9) to[out=0,in=80] (-2.08,0.95);
        
        \draw (5-3.3,0.53) to[out=100,in=180] (2.5,2) to[out=0,in=80] (5-1.7,0.53);
        \draw (5-2.92,0.65) to[out=100,in=180] (2.5,1.6) to[out=0,in=80] (5-2.08,0.65);
        \draw (3.3,0.53) -- (2.92,0.65);
        \draw (1.7,0.53) -- (2.08,0.65);
        
        \node at (-0.4,2.2) {conventional};
        \node at (-0.4,1.85) {attachment};
        \draw[->] (-0.4,1.6) -- (-1.55,1);
        \node at (5,2.2) {attachment extended};
        \node at (5.2,1.85) {to bulk};
        \draw[->] (5.2,1.6) -- (3.5,0.9);
        
        \node at (-6.5,0) {$\hspace{0.5em}$};
    \end{tikzpicture}
    \caption{}\label{fig:sunkenbulk}
\end{figure}

Let us name the disjoint union $\coprod_{i=1}^x S^3 \times D^8$, $X$ and use the name $QX$ for the 11-manifold made by attaching the $q$ ``4-handles'' as just specified.

Next we define the ``5-handles'' $Z$ and their attachment to $QX$ to form $ZQX$.

Let $Z^k$ be one of the $Z$-stabilizers, $1 \leq k \leq z$. The indices participating in $Z^k$, or equivalently the image of $\de$ applied to $Z^k$, defines a 4-cycle in the middle chain group ($\ref{eq:chaincomplex}$). On the manifold level the $\{Q_i\}$ where $i$ appears in $Z^k$ define an element of $H_4(QX;\Z)$. There are several ways to represent this class by disjoint embedding of manifolds of the form $Y_i \coloneqq (\#_{\text{copies}} S^1 \times S^3) \times D^6 \subset \de(QX)$. A theorem of Euler notes that even valence graphs may be covered by disjoint cycles. Representing $Z^k$ by $\coprod_{i \in \text{Index}} Y_i$ is an application of that proof. The graph is the bipartite graph $H$ whose red vertices are the qubits $\{Q_i\}$, $i$ participating in $Z^k$, and whose black vertices are the $X$ stabilizers with indices also appearing in $\{Q_i\}$, edges dictated by $\de^\pr$. The exactness of (\ref{eq:chaincomplex}), now over the integers, implies that all the black indices are not only even valent, but see algebraically zero signed $\frac{1}{2}$-edges. Any pairing of $\frac{1}{2}$-edges of opposite signs at the black vertices provides a combinatorial model for resolving the homology class into an embedded disjoint union $Y = \coprod Y_i$.  That is, starting with a collection of punctured $S^4$'s (red), their boundary components are hooked up by pairs of $\frac{1}{2}$-edges at the black vertices.  The result is $Y$.

We can now define a ``5-handle'' $T^k$ modeled on $Z^k$ as follows: For each $Y_i$, build a graph $G_i$, $\operatorname{Betti}_1(G_i) = \operatorname{Betti}_1(Y_i) = \#(\text{copies}_i)$. Now set $G = \coprod_{i \in I} G_i$ and embed $G$ into $S^5$. The ``5-handle'' $T^k$ will be
\begin{equation}
    T^k = \Big[(S^5 \setminus \operatorname{int}(\mathcal{N}(G))) \times D^6, \de(S^5 \setminus \operatorname{int}(\mathcal{N}(G))) \times D^6 \Big]
\end{equation}
where $\mathcal{N}(G)$ is a closer regular neighborhood.

Observe that the attaching region of $T^k$ is bijective with $Y$ and that is how $T^k$ is to be attached.  Form $ZQX$ by attaching $\{T^k\}$ for all $1 \leq k \leq z$.

Now let us give $ZQX$ the structure of an actual handlebody. A typical ``4-handle'' $Q_i$ has a core $C_i$ consisting of $C_i \coloneqq S^4 \setminus f(i)$ punctures, that is, $S^4$ with the interior of several disjoint 4-balls removed. Relative to its boundary, $C_i$ has a 4D handle structure consisting of $f(i)-1$ 1-handles, to connect up the boundary components.  No more than $O(1)$ 1-handles should attach to any one boundary component in order to avoid congestion.
 Finally add a single 4-handle (the top cell). Crossing this handlebody structure with $D^7$ gives a true 11D handlebody structure on $Q_i$.

Next consider a typical ``5-handle'' $Z^k$, and its ``core'' $C^k$, $C^k \coloneqq (S^5 \setminus \operatorname{int}(\mathcal{N}(G)))$. Build a 5D handlebody structure on $C^k$ relative to its boundary by locating just enough 1-handles attached to the boundary to connect all the components (again with only $O(1)$ number attaching to any given component to avoid congestion). ``Locating'' these 1-handles in $C^k$ simply consists of choosing a disjoint embedded arcs in $C^k$ joining the boundary components and then thickening them to copies of $(D^1 \times D^4, \de D^1 \times D^4)$. At this point the upper boundary of the handlebody is a $\#_{n\text{-copies}} S^1 \times S^3$.

Just as we located 1-handles in $C^k$, we can locate precisely a sufficient quantity of 2-handles to kill a free basis for $\pi_1$(upper boundary), while retaining bounded geometry. 
Since each 
``5-handle" has some bounded local geometry, we only need $O(1)$ such 2-handles.
The ambient space being 5D, general position still makes maps of 2-disks generically embedded and disjoint. These disjoint disks are now thickened into 2-handles $(D^2 \times D^3, \de D^2 \times D^3)$. 
The framing data for the attachment of the ``5-handles" consists of choices of stable normal framings for the circles, the cores of the attaching regions of the 2-handles within the ``5-handles", and then a stable normal framing of $S^4$, the core of the attaching region of the actual 5-handle within the ``5-handle". The first choices, from $\pi_1$ of the stable orthogonal group, are made so as not to produce any second Stiefel-Whitney class in $\tau(ZQX)$, i.e. to ensure $ZQX$ is framed. The second choice actually does not exist as $\pi_4$ of the stable orthogonal group vanishes.
More directly, the original embedding $G \hookrightarrow S^5$ we used could easily have been extended to a $2$-complex $G^+$ to include the 1-handle cores and 2-handles cores in the first place:
\begin{center}
    \begin{tikzpicture}
        \node at (0,0) {$G \hookrightarrow S^5$};
        \node[rotate around={90:(-0.4,-0.5)}] at (0.4,-0.45) {$\hookleftarrow$};
        \node at (-0.4,-1) {$G^+$};
        \node[rotate around={45:(0.4,-0.2)}] at (0.2,-0.25) {$\hookrightarrow$};
    \end{tikzpicture}
\end{center}

After locating appropriate 1-handles and 2-handles in $C^k$, the handlebody structure of $C^k$ is completed by adding a single 5-handle, the top cell. Now, as before, cross with $D^6$ to obtain a true 11D handle structure on each ``5-handle.'' This completes the construction of $ZQX$.

{\bf Killing $\pi_1$:}
$\pi_1(ZQX)$ is freely generated by the 1-handles just described. We could ignore their fundamental group and just double $ZQX$ to produce our examples of power law $Z_2$-(4,7) systolic freedom but it is little extra trouble to kill this fundamental group and arrive at a simply connected family of examples.
We do this by attaching 2-handles whose boundary kills a basis for the fundamental group, forming $ZQX^+$.
We need to bound the volume of handles that are attached.  Previously, when considering a single ``5-handle" corresponding to a
$Z$-stabilizer, we appealed to the assumption of bounded local geometry so that only $O(1)$ disks needed to be attached.
Now, however, the cycles forming a basis of the fundamental group may be of length $\sim n$, where $n$ is the number of cells.
We appeal to the ``decongestion lemma" \ref{dcl}, which shows that on a sparse graph of $V$ vertices, one can find a weakly fundamental cycle basis so that each edge appears at most $O(\log(V)^2)$ times in the basis.
Each cycle can bound a disk of $2$-area proportional to the length of the cycle (and hence, after thickening, $11$-volume similarly bounded), so we can kill all cycles with at most
a logarithmic increase in volume.
As explained in appendix \ref{cyclebasis}, such a weakly fundamental cycle basis has the property that this disk attachment trivializes $\pi_1$; a more general cycle basis might not have this property and might only kill first homology.

For our purposes, it is important that this cycle basis has the stronger property that each edge appears only polylogarithmically many times in the basis, rather than just the property that the {\it total} number of edges used, counted with multiplicity, is bounded by a log times the number of edges.  Since each edge appears only polylogarithmically many times, we can keep polylogarithmically bounded geometry in each cell.  Indeed, we can even (at a further polylogarithmic increase in volume) change it so that every cell attaches to only $O(1)$ other cells.  Simply subdivide each cell into polylogarithmically different subcells in some direction
in the $11$-manifold (this leads to the polylogarithmic increase in volume) that we call the ``height" direction.  Attach each $2$-disk at some given ``height", so that only $O(1)$ disks attach to any subcell.  Indeed only a polylogarithmic number of subcells is required to do this, since for any cycle in the basis, there are only polylogarithmically many cycles later in the basis with which it shares an edge or vertex.  See for example lemma 3 of \cite{hastings2017weight}.  
A simpler argument (that gives a weaker bound, but still polylogarithmic on the needed number of subcells) is to regard this as a problem of hypergraph coloring.  Regard each $2$-disk that we attach as a vertex; regard the ``height" of that $2$-disk as a color.  Regard the cells (before subdividing into subcells) as hyperedges of a hypergraph; then the problem is to color that hypergraph so that each color appears at most $O(1)$ times in each, and standard coloring arguments show then that this can be done with polylogarithmically many colors.
We need to further refine the cellulation of the attached $2$-disks also so that each $2$-disk attaches to only $O(1)$ cells.

As remarked earlier, the actual handle body structure we have described on $ZQX^+$ is in an unconventional order; the handles are not attached lowest index first: our “4-handles" actually contain both 1- and 4-handles, our “5-handles" handles of index 1,2,and 5, and finally, at the last step we attached 2-handles to kill the fundamental group. This is unusual but not forbidden; they can, of course, be pushed down to achieve the standard self-indexing attachment history\cite{milnor2015lectures}. Our final 11-manifold $M$, derived from the code $\mathcal{C}$ is $D(ZQX^+)$, the double. 
Since contact between handles is a symmetric relation, turning $ZQX^+$ upside down and forming the double preserves bounded local geometry.
 This completes the description of $M$ as a smooth, or PL manifold, but in the next subsection we will clarify the Riemannian structure (based on the handle structure) on $M$, explain the significance of avoiding congestion in the construction of $M$, and finally prove that the size of the Riemannian (4,7)-systoles in $M$ governed, up to constants, by the minimal size of $X$, $Z$ logical operators, respectively.

As advertised in theorem \ref{generaltemplate}, the $k$-handle (and $k$-cell) structure of the
following three spaces are identical with the lifted quantum code when $k=3,4,5$: $ZQX, ZQX^+$, and $M^{11}$.
In particular,
$$H_k(ZQX;\mathbb{Z}) \cong
H_k(ZQX^+;\mathbb{Z}) \cong
H_k(M^{11};\mathbb{Z}) \cong H_{k-3}({\rm lifted \, code};\mathbb{Z}),\; {\rm for}\; k=3,4,5,$$
the ``extra" handles in our construction having indexes $1$ and $2$.  Seeking this separation between code
handles and ``extra" handles is what drove the construction to a dimension as high as $11$.

\subsection{Riemannian considerations}
The code $\mathcal{C}$ protects the logical qubits, the middle cohomology of line \ref{eq:chaincomplex} from $Z$ and $X$ errors. A good measure of this protection are the numbers $(d_Z, d_X)$ which denote the minimum size of an $Z$-logical and $X$-logical operator, respectively. In the manifold $M$ an $Z$-logical operator corresponds to a 4-cycle consisting of a union of some ``4-handle'' cores $\{Q_i\}$.  Let $d_Z$ denote the smallest number of $Q_i$ in such a union, representing an essential $4$-cycle in $H_4(ZQX;\Z)$ determined by the lifted $C$. If we speak of \emph{extended cores}, continuing the core after attachment through the lower index handles until it reaches the handlebody spine, then we may drop the quotes and say a (lifted) $Z$ logical operator is a cycle formed by no fewer than $d_Z$ extended 4-handle cores. A logical $X$-operator, in $M$, becomes what can be called a 4-cocycle or a 7-cycle. In the latter language it is a union of extended co-cores to the 4-handles, or equivalently due to the special structure of a double, the extended cores of a collection of 7-handles (see Figure \ref{fig:7handlecore}), dual to the ``4-handles'' on which it operates.

\begin{figure}[ht]
    \centering
    \begin{tikzpicture}[scale=1.2]
        \draw (-2,0) -- (2,0);
        \draw (0,0) ellipse (2 and 3);
        
        \draw (-0.2,0.6) to[out=-70,in=180] (0,0.4) to[out=0,in=250] (0.2,0.6);
        \draw (-0.2,0.2) to[out=70,in=180] (0,0.4) to[out=0,in=110] (0.2,0.2);
        \draw[fill=black] (0,0.4) circle (0.2ex);
        
        \draw (-0.2,-0.2) to[out=-70,in=180] (0,-0.4) to[out=0,in=250] (0.2,-0.2);
        \draw (-0.2,-0.6) to[out=70,in=180] (0,-0.4) to[out=0,in=110] (0.2,-0.6);
        \draw[fill=black] (0,-0.4) circle (0.2ex);
        
        \draw (-0.2,-0.8) to[out=-70,in=180] (0,-1) to[out=0,in=250] (0.2,-0.8);
        \draw (-0.2,-1.2) to[out=70,in=180] (0,-1) to[out=0,in=110] (0.2,-1.2);
        \draw[fill=black] (0,-1) circle (0.2ex);
        
        \draw (-0.2,-1.3) to[out=-70,in=180] (0,-1.5) to[out=0,in=250] (0.2,-1.3);
        \draw (-0.2,-1.7) to[out=70,in=180] (0,-1.5) to[out=0,in=110] (0.2,-1.7);
        \draw[fill=black] (0,-1.5) circle (0.2ex);
        
        \draw (-0.2,-1.8) to[out=-70,in=180] (0,-2) to[out=0,in=250] (0.2,-1.8);
        \draw (-0.2,-2.2) to[out=70,in=180] (0,-2) to[out=0,in=110] (0.2,-2.2);
        \draw[fill=black] (0,-2) circle (0.2ex);
        
        \draw (-0.2,-2.3) to[out=-70,in=180] (0,-2.5) to[out=0,in=250] (0.2,-2.3);
        \draw (-0.2,-2.7) to[out=70,in=180] (0,-2.5) to[out=0,in=110] (0.2,-2.7);
        \draw[fill=black] (0,-2.5) circle (0.2ex);
        
        \draw (-0.2,1.2) to[out=-70,in=180] (0,1) to[out=0,in=250] (0.2,1.2);
        \draw (-0.2,0.8) to[out=70,in=180] (0,1) to[out=0,in=110] (0.2,0.8);
        \draw[fill=black] (0,1) circle (0.2ex);
        
        \node at (0,2) {$\vdots$};
        \draw[fill=black] (0,3) circle (0.2ex);
        \draw[fill=black] (0,-3) circle (0.2ex);
        
        \node at (-0.5,3.1) {\scriptsize{11}};
        \node at (-0.5,-3.1) {\scriptsize{11}};
        \node at (0.8,3.1) {\scriptsize{index 11}};
        
        \node at (1,1) {\scriptsize{index 7}};
        \node at (1,0.4) {\scriptsize{index 6}};
        \node at (1,-0.4) {\scriptsize{index 5}};
        \node at (1,-1) {\scriptsize{index 4}};
        \node at (1,-1.5) {\scriptsize{index 3}};
        \node at (1,-2) {\scriptsize{index 2}};
        \node at (1,-2.5) {\scriptsize{index 1}};
        \node at (1,-3) {\scriptsize{index 0}};
        
        \node at (0.3,1.3) {$\scriptscriptstyle{4}$};
        \node at (-0.27,0.88) {$\scriptscriptstyle{7}$};
        \node at (-0.27,0.63) {$\scriptscriptstyle{5}$};
        \node at (0.25,0.15) {$\scriptscriptstyle{6}$};
        
        \node at (-0.27,-0.2) {$\scriptscriptstyle{6}$};
        \node at (-0.27,-0.55) {$\scriptscriptstyle{5}$};
        \node at (-0.27,-0.75) {$\scriptscriptstyle{7}$};
        \node at (-0.27,-1.05) {$\scriptscriptstyle{4}$};
        \node at (-0.27,-1.3) {$\scriptscriptstyle{8}$};
        \node at (-0.27,-1.6) {$\scriptscriptstyle{3}$};
        \node at (-0.27,-1.85) {$\scriptscriptstyle{9}$};
        \node at (-0.27,-2.1) {$\scriptscriptstyle{2}$};
        \node at (-0.33,-2.3) {$\scriptscriptstyle{10}$};
        \node at (-0.27,-2.6) {$\scriptscriptstyle{1}$};
        
        \node at (2.4,-1.7) {$ZQX^+$};
        
        \draw [decorate,decoration={brace,amplitude=10pt}](3,3) -- (3,-3) node [black,midway,xshift=3.5em] {$D(ZQX^+)$};
        \draw [decorate,decoration={brace,amplitude=10pt}](-3,1.8) -- (-3,-0.2);
        \node at (-4,1.3) {7-cycle of};
        \node at (-4,0.8) {$X$-logical};
        \node at (-4,0.3) {operator};
        \draw[->] (-2.5,0.8) -- (-0.6,0.85);
        \draw [decorate,decoration={brace,amplitude=10pt}](-3,-0.5) -- (-3,-2.5);
        \node at (-4,-1) {4-cycle of};
        \node at (-4,-1.5) {$Z$-logical};
        \node at (-4,-2) {operator};
        \draw[->] (-2.5,-1.5) -- (-0.6,-1.15);
    \end{tikzpicture}
    \caption{}\label{fig:7handlecore}
\end{figure}

The $k$-spine of a handlebody is built as the union of \emph{extended} cores of its handles of index
$\leq k$ (see Figs. \ref{fig1} and \ref{fig:mappingcylinder}).  Extended means that the cores, for $k>0$, are continued using
the defining product structure of the handles to which they attach until they meet cores of a lower index.
The \emph{$k$-spine} of a handlebody is a $k$-dimensional Whitney stratified space\footnote{A cell complex where cell attachments are, by maps which piecewise are smooth and of maximal rank, between a region on the sphere boundary and the cells to which it is attached.}.
 Similarly the \emph{dual $j$-spine} is the union of all extended co-cores of dimension $\leq j$ (corresponding to handles of index $\geq \dim - j$, $11-j$ in the present case). Without the index, ``spine'' and ``dual spine'' refer to the complete cell structure, up to, but not including, the top dimension. While the spines are dual spines of the handlebody $ZQX^+$ will have an increasing number of cells as we run through the family $\{\mathcal{C}\}$ of CSS codes, the LDPC property, together with the care we took with 1 and 2-handles, guarantees that the \emph{local geometric complexity} is bounded. Again, bounded means that all spines $S$ and dual spines $\hat{S}$ have Whitehead compatible triangulations within the ambient manifold $ZQX^+$. Then bounded local geometry is the condition that there is an $O(1)$ bound on the number of simplexes any given simplex meets, and on the biLipschitz comparison constants to $B^k$.

Let $H$ be a handlebody with $\de H \neq \varnothing$, spine $S$ and dual spine $\hat{S}$. Inductively, it is standard to place a mapping cylinder structure (MC) on $H$ by building up a map $f$ handle by handle, $f: \de H \ra S$ so that $H$ is (PL) homeomorphic $H \cong \operatorname{Map}(f) \coloneqq \de H \times [0,1] \slash_{h \times 1 \sim h^\pr \times 1 \iff f(h) = f(h^\pr)}$. If a top dimensional ball $B$ is removed from the interior of $M$ then there is the \emph{dual} mapping cylinder structure $H \cong \operatorname{Map}(\hat{f}) \coloneqq \de B \times [0,1] \slash_{b \times 1 \sim b^\pr \times 1 \iff \hat{f}(b) = \hat{f}(b^\pr)}$, where $\hat{f}$ is a map $\hat{f}: \de B \ra \hat{S}^+ \coloneqq \hat{S} \cup \de H$. Figure \ref{fig:mappingcylinder} demonstrates both cases with the punctured torus.

\begin{figure}[ht]
    \centering
    \begin{tikzpicture}[scale=1.6]
        \draw (-2.5,0) circle (1);
        \draw (-3.5,0) .. controls (-4.9,1.2) and (-2.8,2.4) .. (-2.3,0.98);
        \draw (-3.45,0.35) .. controls (-4.2,1) and (-3,1.8) .. (-2.65,0.98);
        \draw[color=white,line width=1.5mm] (-2.5,0) .. controls (-3.2,0) and (-3.5,0.2) .. (-3.7,0.4) .. controls (-4,0.8) and (-3.8, 1.2) .. (-3.5,1.4) .. controls (-3.2,1.6) and (-2.9,1.5) .. (-2.7,1.3) .. controls (-2.2,0.8) and (-2.2,0.3) .. (-2.5,0);
        \draw (-2.5,0) .. controls (-3.2,0) and (-3.5,0.2) .. (-3.7,0.4) .. controls (-4,0.8) and (-3.8, 1.2) .. (-3.5,1.4) .. controls (-3.2,1.6) and (-2.9,1.5) .. (-2.7,1.3) .. controls (-2.2,0.8) and (-2.2,0.3) .. (-2.5,0);
        \draw (-3,0.87) to [out=85,in=240] (-2.9,1.27);
        \draw (-2.73,0.97) to[out=85,in=250] (-2.7,1.05);
        \draw (-2.77,1.55) to[out=50,in=140] (-1.7,1.6) to[out=-40,in=55] (-1.6,0.45);
        \draw (-2.5,1.35) to[out=55,in=140] (-1.9,1.4) to[out=-40,in=45] (-1.8,0.7);
        \draw[color=white,line width=1mm] (-2.6,0.1) to[out=110,in=245] (-2.825,1.1);
        \draw[color=white,line width=1mm] (-1.6,1.25) to[out=290,in=15] (-2.3,0.125);
        \draw (-2.5,0) to[out=130,in=245] (-2.8,1.2);
        \draw (-2.6,1.45) to[out=45,in=110] (-1.6,1.25) to[out=290,in=15] (-2.5,0);
        \draw[fill=black] (-2.5,0) circle (0.2ex);
        \draw[->] (-3,-0.7) -- (-2.7,-0.1);
        \draw[->] (-2,-0.7) -- (-2.4,-0.1);
        \draw[->] (-1.7,-0.3) -- (-2,0.1);
        \draw[->] (-2.6,0.8) -- (-2.52,0.2);
        \draw[->] (-3.2,0.6) -- (-2.7,0.1);
        \draw[->] (-2.05,0.7) -- (-2.3,0.2);
        \draw[->] (-3.7,0.7) -- (-3.8,0.7);
        \draw[->] (-3.5,1.25) -- ++(120:0.1);
        \draw[->] (-2.85,1.27) -- ++(60:0.1);
        \draw[->] (-3.85,1.21) -- ++(-45:0.1);
        \draw[->] (-3.1,1.61) -- ++(-100:0.1);
        \draw[->] (-1.85,1.4) -- ++(45:0.1);
        \draw[->] (-1.68,1) -- ++(5:0.1);
        \draw[->] (-2.4,1.7) -- ++(-80:0.1);
        \draw[->] (-1.55,1.4) -- ++(220:0.1);
        \node at (-2.5,-1.3) {direct MC-structure};
        \node at (-1,0) {$S$};
        \draw[->] (-1.15,0.05) -- (-1.8,0.3);
        \draw[decorate,decoration={brace,amplitude=10pt}](-4,-1) -- (-4,2) node [black,midway,xshift=-1.5em] {$H$};
        
        \draw (2,0) circle (1);
        \draw (1,0) .. controls (4.5-4.9,1.2) and (4.5-2.8,2.4) .. (4.5-2.3,0.98);
        \draw (4.5-3.45,0.35) .. controls (4.5-4.2,1) and (4.5-3,1.8) .. (4.5-2.65,0.98);
        \draw (1.5,0.87) to [out=85,in=240] (4.5-2.9,1.27);
        \draw (4.5-2.73,0.97) to[out=85,in=250] (4.5-2.7,1.05);
        \draw (4.5-2.77,1.55) to[out=50,in=140] (4.5-1.7,1.6) to[out=-40,in=55] (4.5-1.6,0.45);
        \draw (2,1.35) to[out=55,in=140] (4.5-1.9,1.4) to[out=-40,in=45] (4.5-1.8,0.7);
        \draw (0.97,1.51) -- (1.1,1.25);
        \draw[white,line width=1mm] (1.6,0) to[out=180,in=-75] (0.7,0.7) to[out=105,in=220] (0.95,1.35);
        \draw[->] (1.6,0) to[out=180,in=-75] (0.7,0.7) to[out=105,in=220] (0.95,1.35);
        \draw[white,line width=1mm] (2.1,0.3) to[out=65,in=-55] (2.05,1) to[out=125,in=30] (1.1,1.4);
        \draw[->] (2.1,0.3) to[out=65,in=-55] (2.05,1) to[out=125,in=30] (1.1,1.4);
        \draw (2.7,1.67) -- (2.55,1.44);
        \draw[white,line width=1mm] (1.8,0.25) to[out=120,in=240] (1.68,1.1);
        \draw (1.8,0.25) to[out=120,in=240] (1.7,1.2);
        \draw[->] (1.9,1.45) to[out=60,in=145] (2.5,1.6);
        \draw[white,line width=1mm] (2.45,0.2) to[out=25,in=-35] (2.7,1.5);
        \draw[->] (2.35,0.15) to[out=20,in=-35] (2.7,1.5);
        \draw[->] (1.7,0.2) -- (1.3,0.7);
        \draw[->] (2.2,0.25) -- (2.5,0.8);
        \draw[->] (1.8,-0.25) -- (1.5,-0.8);
        \draw[->] (2.3,-0.2) -- (2.7,-0.7);
        \draw[->] (2,0.3) -- (1.9,0.95);
        \draw (2,0) ellipse (0.4 and 0.3);
        \node at (2,0) {$B$};
        \node at (2,-1.3) {dual MC-structure};
        \node at (0.2,1.7) {$\hat{S}$};
        \draw[->] (0.4,1.65) to[out=-50,in=190] (0.95,1.4);
        \draw[->] (0.4,1.65) to[out=45,in=135] (2.6,1.6);
        \draw[decorate,decoration={brace,amplitude=10pt}](3.4,2) -- (3.4,-1) node [black,midway,xshift=1.5em] {$H$};
    \end{tikzpicture}
    \caption{}\label{fig:mappingcylinder}
\end{figure}

\begin{lemma}
\label{admitsmetric}
    Let $H$ be a handlebody (or family of handlebodies) with $\de H \neq \varnothing$, with both spine and dual spine of bounded local complexity. Then $H$ admits a Riemannian metric $H_R$ with totally geodesic boundary so that the projections associated to both the direct and dual mapping cylinder structure are $O(1)$-Lipschitz. Furthermore, all sectional curvatures of the Riemannian metric are also $O(1)$ in absolute value. If the spine and dual spine have, instead, $O(\log(q))$ local complexity then the conclusions similarly become $O(\operatorname{polylog}(q))$.
\end{lemma}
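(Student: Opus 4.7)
The plan is to build a Riemannian metric handle-by-handle from standard product models on $D^k\times D^{n-k}$, to arrange collars so that the attaching regions and $\partial H$ can be glued compatibly, and then to read off both mapping-cylinder projections from the product structure of each handle.

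\textbf{Step 1 (local models and gluing).} For each handle $D^k\times D^{n-k}$ I equip it with the standard product metric obtained from fixed flat (or round hemispherical) disks of bounded radius. Because the handle decomposition of $H$ has bounded local complexity, the combinatorial type of a neighborhood of any handle (which handles meet it, in what configuration) ranges over only $O(1)$ possibilities, so finitely many local models suffice. Near each attaching region $\partial D^k\times D^{n-k}$ I insert a small smooth collar in which the product metric on the attached handle is interpolated to the metric already present on the piece to which it is being glued. Since only $O(1)$ handles meet in any region, the collars and their interpolations can be chosen with a priori bounded derivative data, so the resulting metric $H_R$ has all sectional curvatures uniformly $O(1)$.

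\textbf{Step 2 (totally geodesic boundary).} A second collar is installed along $\partial H$ itself: in a small neighborhood the metric is required to be of product type $g_{\partial H}+dt^2$ for some metric $g_{\partial H}$ on $\partial H$. This is compatible with Step 1 because $\partial H$ is itself a union of product faces $D^k\times\partial D^{n-k}$ (and analogous dual faces) of the handles whose top dimensional cell lies in $\partial H$, and on each such face the restriction of the product metric is already of product-collar form. This forces the second fundamental form of $\partial H\hookrightarrow H_R$ to vanish, so $\partial H$ is totally geodesic.

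\textbf{Step 3 (the two Lipschitz bounds).} On a single $k$-handle with the product metric, the projection $(x,y)\mapsto(x,0)$ onto the core and the projection $(x,y)\mapsto(0,y)$ onto the co-core are both $1$-Lipschitz. The direct mapping cylinder projection $f:\partial H\to S$ and the dual projection $\hat f:\partial B\to \hat S^+$ are, by construction, built by iterating such factor projections through the handle attaching data, extending cores through the product structure of the handles to which they attach. Because the interpolating collars in Step 1 distort distances by a uniformly bounded multiplicative constant, each step in the iteration preserves being $O(1)$-Lipschitz, and because the number of handle levels that any given point passes through before reaching the spine is bounded by the local complexity (hence $O(1)$), the composed maps $f$ and $\hat f$ are $O(1)$-Lipschitz. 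The crucial point is that the product structure of a handle makes the two projections \emph{simultaneously} $1$-Lipschitz, so there is no competition between them.

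\textbf{Main obstacle and polylog case.} The only real obstacle is to verify that the collar interpolations of Step 1 can indeed be chosen so that \emph{both} MC projections remain Lipschitz across the transition regions, rather than having one projection dilate while the other contracts. This is handled by insisting that the collar interpolations themselves be product-form in the direction separating core from co-core, which is possible precisely because the handles are products. In the $O(\log q)$ local complexity regime the same construction goes through verbatim, but now the number of handles meeting a given region, and hence the multiplicative constant accumulated by the collar interpolations and by the iterated projections, is bounded by $O(\operatorname{polylog}(q))$; consequently the Lipschitz constants of $f$ and $\hat f$ and the sectional curvature bounds degrade to $O(\operatorname{polylog}(q))$, as claimed.
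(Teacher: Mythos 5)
Your Steps 1 and 2 reproduce the part of the argument the paper calls ``routine'': building the metric handle by handle from product models, interpolating across collars, and arranging a product collar on $\partial H$ so the boundary is totally geodesic. That part is fine. The gap is in your treatment of the \emph{dual} mapping cylinder. You treat the dual projection $\hat f:\partial B\to\hat S^+$ on the same footing as the direct one, arguing that the product structure of each handle makes both factor projections simultaneously $1$-Lipschitz. But that local argument does not address a global obstruction: $\hat f$ is a surjection from $\partial B$ onto the dual spine union the whole boundary, so if $B$ is chosen to be a small top-dimensional ball, $\partial B$ has bounded diameter while the target $\hat S^+\supset\partial H$ is huge, and no parametrization of $H$ as $\partial B\times[0,1]/\!\sim$ can have an $O(1)$-Lipschitz projection, no matter how nicely the metric is built near each handle. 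The paper flags exactly this as the ``additional consideration'': one must choose the deleted ball $B$ to be \emph{extensive}. Concretely, in the context of the construction $B$ is taken to be the union of the $0$-handles together with a tree's worth of $1$-handles (from the ``$4$-handles'') joining them into one large ball, so that $\partial B$ is commensurate in size with $\hat S^+$. Without this choice your claim that $\hat f$ is $O(1)$-Lipschitz simply fails, and the iteration count bounded by local complexity is not the relevant quantity. Add the specification of $B$ and the argument is complete.
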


\begin{proof}
    For the direct MC structure this is routine. The MC structures are built locally along with the Riemannian metric. As new handles are attached the metric must be adjusted near the attaching region to preserve the property of totally geodesic boundary. Locality and log-bounded geometry lead to the $O(\operatorname{polylog}(q))$ estimates.

    For building the dual MC-structure and controlling its Lipschitz constant there is an additional consideration. The deleted ball $B$ must be extensive. Obviously a small ball and large $\de H$ would force a divergent Lipschitz constant. In the context of our construction, $B$ may be chosen to be the 0-handle as specified (in building $X$) together with a subset of the 1-handles (within the ``4-handles'' associated to $Q$) sufficient to connect these 0-handles according to the pattern of a tree into one more extensive ball.
\end{proof}

Lemma \ref{admitsmetric} gives the handlebody $ZQX^+$ a Riemannian metric. Since the boundary is totally geodesic there is now a well-defined Riemmanian metric\footnote{Technically we should add that the metric has a totally geodesic product collar on its boundary so that the metric on the double is $\C^\infty$.} on the double $M_R \coloneqq D(ZQX^+)$. The following key lemma allows us to compare $\Z_2$-(4,7)-systoles in $M_R$ with $(d_Z,d_X)$ in $\mathcal{C}$. Let $S$ be the spine (i.e.\ cell structure) of $M_R$ constructed from the handle cores in $ZQX^+$ together with the co-cores of the upside-down copy.

\begin{lemma}\label{lm:47systolic}
    There are constants $c_k > 1$ depending on dimension $d$ and the local geometry of $S$, $1 \leq k \leq \dim(M) - 1$, so that if $\alpha$ is a $k$-cycle carried by smooth chains in $M_R$, $[\alpha] \in H_k(M;\Z_2)$, then there is a homologous cellular cycle $\lbar{\alpha}$ carried by $k$-cells of $S$ so that the following $k$-area bound holds:
    \[
        k\parea(\lbar{\alpha}) < c_k k\parea(\alpha)
    \]
\end{lemma}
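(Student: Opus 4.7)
My plan is to prove the lemma in two stages. First, push $\alpha$ onto the spine $S$ using the direct mapping cylinder retraction supplied by Lemma \ref{admitsmetric}. Second, push the resulting chain down through the skeleta of $S$ onto the $k$-skeleton via a Federer-type cellular deformation, then take the mod $2$ reduction to obtain the cellular cycle $\lbar{\alpha}$. The area bound at each stage is controlled by Lipschitz constants coming from bounded local geometry.

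For the first stage, Lemma \ref{admitsmetric} produces the canonical retraction $\pi_S : M_R \to S$, which is $O(1)$-Lipschitz with a constant depending only on $d$ and the local complexity of $S$, and which is homotopic to the identity along the mapping cylinder fibers. Pushing $\alpha$ forward gives a $k$-chain $\pi_{S*}\alpha$ on $S$ homologous to $\alpha$, with
\[
k\parea(\pi_{S*}\alpha) \;\leq\; L^k \cdot k\parea(\alpha),
\]
where $L$ is the Lipschitz constant of $\pi_S$. Note that $\pi_{S*}\alpha$ is supported on $S$, a stratified space of dimension at most $d-1$, but need not yet be carried by $k$-cells.

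For the second stage, I push $\pi_{S*}\alpha$ down to the $k$-skeleton of $S$ one dimension at a time, for $j = \dim(S), \dim(S)-1, \ldots, k+1$. In each open $j$-cell $c$ meeting the current chain, I pick, by an averaging argument in the spirit of Federer's deformation theorem, a point $p_c$ in the interior of $c$ not covered by the chain, and apply radial projection $r_c : c \setminus \{p_c\} \to \partial c$. Bounded local biLipschitz geometry of $c$ yields a uniform $O(1)$ bound on the $k$-area distortion of $r_c$, and iterating downward produces a $k$-chain $\alpha'$ carried by the $k$-skeleton of $S$, homologous to $\pi_{S*}\alpha$, with $k\parea(\alpha')$ controlled by $k\parea(\pi_{S*}\alpha)$ up to a constant depending only on $d$ and $k$. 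Since we work mod $2$, restricting $\alpha'$ to each $k$-cell $c$ gives either the fundamental class of $c$ or zero; collapsing multiplicities yields the cellular cycle $\lbar{\alpha} = \sum_c n_c\, c$ with $n_c \in \{0,1\}$, whose $k$-area equals the number of cells used, itself $O(k\parea(\alpha'))$ because each cell has $\Theta(1)$ area.

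The main obstacle is controlling the constants in the Federer deformation: the radial projection distortion could in principle degrade as we iterate through $\dim(S) - k$ dimensions. The key observation, which is standard, is that the averaging argument produces, in a cell with bounded local geometry, a basepoint $p_c$ whose radial projection has distortion depending only on the local biLipschitz constants and the ambient dimension. Since the iteration involves only finitely many dimensions (at most $d-1-k$), the accumulated constant remains $O(1)$, giving the desired $c_k$. The bounded local geometry of $S$ ensured by the careful handle construction of $ZQX^+$ is precisely what makes every step local, so that the whole argument produces a constant independent of the size of the code.
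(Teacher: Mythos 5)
Your proposal is correct and follows essentially the same route as the paper's proof: both push the smooth chain down through the cells of dimension $j > k$ by radial projection from a well-chosen basepoint, with the basepoint selected by an averaging argument (this is precisely Lemma \ref{push}), and with bounded local geometry keeping the per-cell distortion $O(1)$ so that iterating over the finitely many dimensions $\dim M - k$ gives a uniform constant $c_k$. Your explicit first stage (collapsing onto the spine $S$ via the $O(1)$-Lipschitz mapping cylinder retraction from Lemma \ref{admitsmetric}) is merely a different bookkeeping of the top-dimensional step; the paper instead applies Lemma \ref{push} uniformly to cells of every dimension greater than $k$.
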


\begin{proof}
    Denote the upside down, orientation reversed copy of $ZQX^+$ by $\lbar{ZQX^+}$.

We repeatedly apply Lemma \ref{push} below to move $\alpha$ off the $j$-cells, for $j>k$, while increasing the $k$-area
only by a bounded amount at each step, until
$\alpha$ is deformed to the $k$-spine.  At this point it is homologous to a cycle of $k$-cells from $k\operatorname{-spine}(M)$, and the lemma is proven, $c_k$ being the product of the $O(1)$ constants encountered along the way. 
Lemma \ref{push} assumes the geometry of a ball for each cell; we encounter additional $O(1)$ constants depending on the particular local geometry.
\end{proof}

The ``pushing" argument above is supplied by the following lemma.  See also Appendix \ref{ptbap} for an alternative approach.
\begin{lemma}
\label{push}
Let $c$ be a smooth $k$-chain in $B^n$, for $k<n$, such that $\partial c$ is contained in $\partial B^n=S^{n-1}$.  Here we regard $B^n$ as a standard ball in Euclidean space of radius $1$.  Then, there is some other smooth $k$-chain $d$ contained in $\partial B^n$ such that $\partial d=\partial c$ and such that the unsigned area of $d$ is bounded by a constant times the unsigned area of $c$.
\begin{proof}
The proof is based on ``pushing $c$ to the boundary".  
Given a point $p$ in $B^n$, define a map\footnote{Famously,  $f_p$ fails to be a function at $p$ where we interpret the image
$f_p(p)$ not to be a point but the entire $S^{n-1}$.  So, technically, $f_p$ is a closed relation, not a function.}
$f_p(\cdot)$ from $B^n$ to $S^{n-1}$ so that $f_p(x)$ is the point on $S^{n-1}$ contained in a ray starting at $x$ and passing through $p$.
We will choose $p$ uniformly from the ball of radius $1/2$ centered at the origin and estimate the expected area of $f_p(c)$.
We will bound this expected area by a constant times the area of $c$, implying that for some choice of $p$, choosing $d=f_p(c)$ has area bounded by constant times the area of $c$.

To estimate the expected area of $f_p(c)$, consider a $k$-simplex $\sigma$ with infinitesimal area $\dA$ located somewhere in $B^n$.  We bound the expected area of $f_p(\sigma)$ by a constant times $\dA$.  Bounding this for all $\sigma$ will bound the expected area of $f_p(d)$ by a constant times the area of $c$ by linearity of expectation, since $c$ can be approximated by a sum of infinitesimal simplices.

If $x$ is at distance $r$ from $\sigma$, then the solid angle of $\sigma$, as seen from $x$, is bounded by constant times $\dA/r^k$.
These rays cast some shadow on $\partial B^n$.  Since $x$ has distance at least $1/2$ from $\partial B^n$, all rays intersect the boundary at some angle strictly bounded away from the tangent to the boundary. 
 Hence, the shadow has area bounded by a constant times the solid angle, and hence bounded by constant times $\dA/r^k$.
Remark: a little geometry computes this angle exactly though it is not needed; consider a ray intersecting a point $a$ in $\partial B^n$, with the ray at angle $\theta$ away from the normal.  Suppose the ray just intersects the ball of radius $1/2$ at some point $b$.  Then a right triangle is formed containing vertices $a,b$ and the origin.  The hypotenuse has length $1$ and the side opposite to $\theta$ has length $1/2$ so $\sin(\theta)=1/2$ and $\theta=\pi/6$.  For any $\theta$ larger than this, the ray does not intersect the ball of radius $1/2$.

The probability that $x$ is within distance $r$ of $\sigma$ is bounded by constant times $r^d$.
Hence, integrating over $x$, for $d>k$ the expected area is bounded by constant times $\dA$.
\end{proof}
\end{lemma}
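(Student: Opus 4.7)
The plan is to define $d$ by radially projecting $c$ from a carefully chosen center $p\in B^n$ onto $\partial B^n$. For any $p$ not lying on $c$, let $f_p\colon B^n\setminus\{p\}\to S^{n-1}$ send a point $x$ to the intersection of $S^{n-1}$ with the ray from $p$ through $x$. Since $f_p$ is the identity on $\partial B^n$, the image $d:=f_p(c)$ automatically satisfies $\partial d=\partial c$. The entire game is to choose $p$ so that the area of $d$ is bounded by a constant times the area of $c$.

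The obvious obstruction is that if $p$ happens to sit close to a substantial portion of $c$, the projection stretches area arbitrarily. I would handle this by averaging: sample $p$ uniformly at random from the concentric ball of radius $\tfrac{1}{2}$ (keeping $p$ bounded away from $S^{n-1}$ so the projection does not degenerate at the target) and show that the \emph{expected} area of $f_p(c)$ is $O(\operatorname{area}(c))$. By linearity of expectation, this reduces to the following local estimate: for a single infinitesimal $k$-simplex $\sigma$ at position $x\in B^n$ with area $\dA$, one has $\mathbb{E}_p[\operatorname{area}(f_p(\sigma))]=O(\dA)$.

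For a fixed $p$ at distance $r=|x-p|$ from $\sigma$, the solid angle subtended by $\sigma$ at $p$ is of order $\dA/r^k$. Because $p$ lies in the half-radius ball, every ray from $p$ meets $S^{n-1}$ at an angle bounded away from tangency (an elementary right-triangle argument gives an angle of at least $\pi/6$ with the outward normal), so the area of the shadow $f_p(\sigma)\subset S^{n-1}$ is a uniformly bounded multiple of the solid angle. Integrating over $p$, the Lebesgue measure of $\{p:|x-p|\leq r\}$ scales like $r^n$, so $\mathbb{E}_p[r^{-k}]$ is of order $\int_0^1 r^{n-1-k}\,\mathrm{d}r$, which is finite precisely because $k<n$. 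Averaging then produces a specific $p$ for which $d:=f_p(c)$ has area $O(\operatorname{area}(c))$, as required.

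The main technical obstacle I anticipate is a genericity issue: $f_p$ is undefined at $p$ and is only a well-behaved smooth map away from loci where $c$ fails to be transverse to rays from $p$, so one must verify that for a full-measure set of $p$ the image $f_p(c)$ is an honest smooth $k$-chain whose boundary is $\partial c$. Sard's theorem applied to the evaluation $(p,x)\mapsto f_p(x)$ handles this, giving a generic $p\notin c$ with $f_p$ smooth on $c$. The dimensional hypothesis $k<n$ enters in exactly two places --- to supply such a generic $p$ (a $k$-chain has measure zero in $B^n$) and to make the radial integral converge at $r=0$ --- so the assumption is genuinely tight.
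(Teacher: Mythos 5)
Your proof is correct and takes essentially the same approach as the paper's: radial projection from a random center $p$ in the half-radius ball, the local solid-angle estimate $\dA/r^k$, the $\pi/6$ angle-of-incidence bound, and convergence of $\int_0^1 r^{n-1-k}\,\mathrm{d}r$ precisely when $k<n$. Your closing genericity remark is a small useful addition, and you correctly take $f_p$ to be projection along the ray \emph{emanating from $p$ through $x$} (so that $f_p$ fixes $\partial B^n$ and hence $\partial d=\partial c$), which silently tightens the paper's slightly ambiguous phrasing of the ray's direction.
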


We have arrived at one of the goals of this section: Riemannian 11-manifolds $\{M\}$ modeling the CSS codes $\{\mathcal{C}\}$.

\begin{thm}
\label{powerZ2}
    There is a sequence $\{M_i\}$ of closed, simply connected, stably framed Riemannian $11$-manifolds exhibiting power-law $\Z_2$-(4,7)-systolic freedom. The power is $1+\alpha$, $\alpha = 1$ up to polylogarithmic factors.
\end{thm}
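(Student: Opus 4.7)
The plan is to produce the sequence $\{M_i\}$ by applying Theorem \ref{generaltemplate} to a suitable family of LDPC CSS codes, so that the code-theoretic parameters of the input translate directly into the claimed systolic bound via items (2)--(4) of that theorem.

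First, I would specify the input. Take the family of quantum LDPC codes of \cite{HHO} (or, to obtain the stated exponent $\alpha = 1$, the improved codes \cite{LD, BE} noted in the ``Remark added''), giving for each $i$ a sparse $\Z_2$-chain complex $E_{2,i} \to E_{1,i} \to E_{0,i}$ on $q_i$ qubits with $\Z_2$-distances $d_{X,i}, d_{Z,i} = \Omega(q_i / \operatorname{polylog}(q_i))$. I then lift each complex to a sparse integral chain complex, using the sparse liftability of these particular code families; if a sparse lift is not directly available, one invokes the general existence theorem from Section \ref{liftingsection} and absorbs any resulting polylog cost into the final exponent.

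Feeding this sparse integral lift into Theorem \ref{generaltemplate} produces a closed, simply connected, stably framed, PL/smooth $11$-manifold $M_i$ carrying an appropriately scaled Riemannian metric of bounded local geometry. From items (2)--(4) of that theorem, the total number of $11$-simplices (equivalently the Riemannian volume) satisfies $\operatorname{vol}_{11}(M_i) = \Theta(q_i)$ up to polylog, while the mod-$2$ $4$- and $7$-systoles satisfy $\operatorname{sys}_4(M_i) = \Theta(d_{Z,i})$ and $\operatorname{sys}_7(M_i) = \Theta(d_{X,i})$ up to polylog. Stable framing is built in by the tangential framings prescribed at each handle attachment; simple-connectedness comes from the $\pi_1$-killing step together with Lemma \ref{dcl}; and the Riemannian version of the systolic bound (as opposed to the purely combinatorial one) follows from Theorem \ref{thm:SRconstant} together with Lemma \ref{admitsmetric}. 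The systolic-ratio computation is then immediate:
\[
    \Z_2\text{-}(4,7)\text{-}\operatorname{SR}(M_i) \;=\; \frac{\operatorname{sys}_4(M_i)\cdot \operatorname{sys}_7(M_i)}{\operatorname{vol}_{11}(M_i)} \;=\; \Omega\!\left(\frac{d_{Z,i} \cdot d_{X,i}}{q_i \cdot \operatorname{polylog}(q_i)}\right) \;=\; \Omega\!\left(\frac{q_i}{\operatorname{polylog}(q_i)}\right).
\]
Writing $\operatorname{SR}=n_i/d_i$ with $d_i = \operatorname{vol}_{11}(M_i) = \Theta(q_i)$ and numerator $n_i = \operatorname{sys}_4\cdot\operatorname{sys}_7 = \Theta(q_i^2)$, both up to polylog, yields $n_i = \Omega(d_i^{1+\alpha})$ with $\alpha = 1$ up to polylogarithmic factors, which is the claimed power law.

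The step I expect to be the main obstacle is the sparse-lift hypothesis: not every $\F_2$-chain complex admits an integer lift satisfying $\tilde\partial'\tilde\partial = 0$ with $O(1)$ row/column $\ell^1$-norms, and verifying this sparseness for a specific code family may require direct code-theoretic argument rather than black-box application of the existence result of Section \ref{liftingsection}. If the best available lift inflates boundary weights by more than a polylog factor, the exponent $\alpha$ degrades correspondingly, and one must either improve the lift or substitute another LDPC family while preserving $d_X, d_Z = \Omega(q / \operatorname{polylog}(q))$.
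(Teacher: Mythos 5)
Your proposal follows essentially the same route as the paper: take the LDPC codes of \cite{HHO} (or \cite{LD,BE} for $\alpha=1$) together with their explicit sparse lifts, run the handle construction of the section to get closed, simply connected, stably framed $11$-manifolds, and translate the code distances $(d_Z,d_X)$ into the $(4,7)$-systoles via Lemma \ref{lm:47systolic} (equivalently items (3)--(4) of Theorem \ref{generaltemplate}), giving the same final arithmetic. The one inaccuracy is your fallback remark that one could ``invoke the general existence theorem from Section \ref{liftingsection} and absorb any resulting polylog cost'': Lemma \ref{liftexists} gives no sparsity control whatsoever (the paper in fact conjectures that some sparse complexes admit no sparse lift), so this fallback would fail --- what actually makes the argument go through, as you correctly observe in your closing paragraph, is the direct sparse lift available for the fiber-bundle codes of \cite{HHO} and of \cite{LD,BE}.
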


\begin{proof}
In the first verson of this paper, we proved the theorem for $\alpha=\frac{1}{4}$.  To show this,
    the power law CSS codes $\{\mathcal{C}\}$ from \cite{HHO} are translated first into chain complexes as in line \ref{eq:chaincomplex}, and sparsely lifted as described in section \ref{liftingsection}, and finally, as we have described in this section, into closed 11-manifolds $\{M\}$. The (4,7)-$\Z_2$-systolic properties of $M$ are shown in Lemma \ref{lm:47systolic} to mirror, up to constants, the $(d_Z, d_X)$ of the code $\mathcal{C}$. The intermediary between smooth singular chains in the manifold and the sizes of $X$-, $Z$-logical operators, are the cellular chains of $S$. Specifically, the singular 4 or 7 chains are related (with bounded distortion) to combinatorial $\Z_2$-cycles built from 4 or 7 cells of the spine $S(M)$.
    
    Using the codes of \cite{LD}, which can also be sparsely lifted in the same way, the power can be increased to $\alpha=1$.
\end{proof}

\subsection{Proof of Theorem \ref{thm:SRconstant}}
The key property of Whitehead's [Wh1940] triangulation of smooth manifolds is the bounded geometry of the simplexes. Achieving arbitrarily fine triangulations of charts while retaining a bound preventing simplexes from becoming too ``pointy'' was key in matching triangulations on overlapping charts. For us bounded geometry of the triangulation allows us to bound the multiplicative stretching of $k$-area as a smooth $k$-cycle is deformed into the $k$-skeleton.

It is not necessary to work with triangulations, piecewise smooth cellulations are fine. Also it is only necessary that the geometry of $j$-cells, $1 \leq j \leq d$, be uniformly bounded up to \emph{scale}; dilation is permitted.

\begin{definition}
    We say a piecewise smooth cell structure (or triangulation) on a smoothly Riemannian manifold $M^d$ has bounded geometry (up to scale), if
    \begin{enumerate}
        \item There is a uniform bound on how many cells meet any given cell, and
        \item There is a constant $C$, $C \geq 1$, so that there is a $C$-biLipschitz homeomorphism $h: b^\pr \twoheadrightarrow B^j$, onto the standard unit ball in Euclidean $j$-space, where $b$ is any cell (or simplex) of the cellulation (triangulation) with its induced Riemannian metric and $b^\pr$ denotes $b$ with metric rescaled by a positive constant.
    \end{enumerate}

    Similarly, for \emph{log bounded geometry}, the constants are replaced by $O(\operatorname{polylog}(q))$, $q$ the number of qubits.
\end{definition}

\begin{proof}[Proof of Theorem \ref{thm:SRconstant}]
    The top dimensional case of the fundamental $d$-cycle is routine. Standard techniques, such as Whitehead's, allow the construction of a subdivision where all cells (simplexes) have approximately uniform size as well as shape: each cell $b$ is $C$-biLipschitz homeomorphic to some fixed $B_\epsilon^j$ of radius $\epsilon>0$. Now, up to constants volume is counted by the number of $d$-cells.

    The case of a piecewise smooth singular $p$-cycle $\alpha$ (in our case, a cycle with $\Z_2$-coefficients though the argument is  the same as for integral coefficients) for $1 \leq p < d$ is more interesting. The outline is this: We perturb $\alpha$ to be transverse to the cell structure (triangulation) and observe that it passes through many $d$ cells. 
We then repeatedly apply lemma \ref{push}, increasing the area by $O(1)$ factors until we obtain some    
$\alpha_{\text{final}}$ is on the $p$-skeleton.  Replace it with its linearization $\lbar{\alpha}$, where $\lbar{\alpha}$ is the cellular (simplical) $p$-chain consisting of $k_i$ copies of the $i$th $p$-cell $b_i$, where $k_i$ is the homological degree of $\alpha_{\text{final}}$ over $b_i$. In our case $k_i = 0$ or $1 \in \Z_2$. This last step clearly reduces area (e.g.\ by a calibration argument): no $p$-cycle can cover a $p$-cell (simplex) more efficiently than the identity map. The result is
    \begin{equation}
        p\parea(\lbar{\alpha}) \leq p\parea(\alpha_{\text{final}}) \leq C^\pr p\parea(\alpha_{\text{initial}}) \tag{$\ast\ast$},
    \end{equation}
  completing the proof of Theorem \ref{thm:SRconstant}.
\end{proof}

\subsection{Some high-dimensional smooth topology}
\label{difftypesec}
\begin{thm}\label{thm:tangentbundle}
    Let $M$ be a closed\footnote{Closed means compact and without boundary.}, smooth, 11-dimensional manifold with stably framed tangent bundle and $\pi_1(M) \cong 0$, $H_i(M;\Z) \cong \Z$, $i = 0,4,7,$ and 11, and $H_i(M;\Z) \cong 0$, $i = 1,2,3,5,6,8,9,$ and 10. Then $M$ is diffeomorphic to $S^4 \times S^7 \# \Theta$, where $\Theta \in \boldsymbol\Theta_{11}$, the group of all 11-dimensional homotopy spheres. (According to \cite{mk63} $\boldsymbol\Theta_{11}$ has order 992.)
\end{thm}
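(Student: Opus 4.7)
The plan is to realize $M$ as a handlebody built from exactly four handles, of indices $0$, $4$, $7$, and $11$, and to match this decomposition with that of $S^4 \times S^7$ so that the only remaining freedom is in the gluing of the top $11$-handle---which is precisely what produces the $\boldsymbol\Theta_{11}$ summand.

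First I would produce the minimal smooth handle decomposition. Starting from any smooth handle decomposition of $M$, since $M$ is simply connected of dimension $11 \geq 6$ and each integral homology group is either $\Z$ or $0$ with no torsion, standard Smale--Milnor handle trading applies: $1$-handles cancel against $2$-handles using $\pi_1(M) = 0$; in the middle dimensions the Whitney trick is available and algebraic intersection numbers equal $\pm 1$ on cancelling generators; and the same argument applied to the dual (upside-down) decomposition eliminates excess handles of index $\geq 6$. The outcome is a decomposition with exactly one handle in each of the indices $0, 4, 7, 11$.

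Next I would identify the intermediate handlebodies. After the $4$-handle is attached to $D^{11}$, the result is the unit disk bundle of some rank-$7$ vector bundle $\nu$ over $S^4$. Stable framing of $M$, together with stable triviality of $TS^4$, forces $\nu$ to be stably trivial; and since the stabilization map $\pi_3(SO(7)) \to \pi_3(SO)$ is an isomorphism, $\nu$ is genuinely trivial, so the $4$-handlebody is diffeomorphic to $S^4 \times D^7$. The $7$-handle then attaches along a framed embedding $S^6 \hookrightarrow S^4 \times S^6$; up to isotopy its homotopy class lies in $\pi_6(S^4) \oplus \pi_6(S^6) \cong \Z_2 \oplus \Z$ together with a normal framing. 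The $\Z$-component must be $\pm 1$ in order to kill $H_6$ of the boundary, while the $\Z_2$-component and the framing are constrained by the requirement that the final closed manifold remain stably framed. After checking these constraints, the $7$-handlebody is $(S^4 \times S^7) \setminus \operatorname{int}(D^{11})$. Finally, attaching the $11$-handle amounts to gluing $D^{11}$ along an orientation-preserving self-diffeomorphism of $S^{10}$; by the standard description of $\boldsymbol\Theta_{11}$ as isotopy classes of such gluings modulo those extending over $D^{11}$ (see \cite{milnor2015lectures}), different choices produce closed manifolds differing by connected sum with a homotopy $11$-sphere. Hence $M \cong S^4 \times S^7 \# \Theta$ for some $\Theta \in \boldsymbol\Theta_{11}$.

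The main obstacle will be the middle step---specifically, ruling out a genuinely exotic $7$-handlebody arising from the $\Z_2 = \pi_6(S^4)$ summand. A clean route is to combine the stable framing of $M$ with the fact that $\eta^2 \colon S^6 \to S^4$ is stably nullhomotopic, so that any such attaching map is isotopic, after absorbing a framing twist in $\pi_6(SO)$, to the standard product embedding $\{\mathrm{pt}\} \times S^6$. Alternatively one can invoke the surgery exact sequence for the simply connected $11$-manifold $S^4 \times S^7$, in which the normal-invariant term and $L_{11}$ collapse in a way that leaves only the $\boldsymbol\Theta_{11}$ ambiguity in the smooth structure set.
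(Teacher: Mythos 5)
Your route---produce a minimal handle decomposition with a single handle in each of indices $0,4,7,11$, identify the $4$-handlebody as $S^4\times D^7$, then control the $7$-handle attaching map and cap with an $11$-handle---is genuinely different from the paper's. The paper instead does framed surgery on an embedded $S^4$ representing the generator of $H_4(M;\Z)$ (its normal bundle is trivial because it is stably trivial and $\pi_3(\operatorname{SO}(7))\to\pi_3(\operatorname{SO})$ is an isomorphism), obtains a homotopy $11$-sphere $\theta$ containing a framed $S^6\times D^5$, proves a knot-theoretic lemma that this framed $S^6$ is unknotted in $\theta$, and then reverses the surgery to recover $M\cong S^4\times S^7\#\theta$. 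The dividend of that detour is dimensional: the unknotting question for the $S^6$ is asked inside an $11$-manifold (codimension $5$), where $2\cdot 11 > 3(6+1)$ puts one in Haefliger's unknotting range, and the paper backs this up with an explicit concordance argument and Hudson's concordance-implies-isotopy theorem. Your version asks the analogous question one dimension lower, for an embedding $S^6\hookrightarrow S^4\times S^6$ (codimension $4$), where $2\cdot 10 < 3(6+1)$ and homotopy class does not a priori determine isotopy class; you would first have to rule out Haefliger-type knotting of $S^6$ in the $10$-dimensional boundary, which your sketch silently assumes away.

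More seriously, your proposed resolution of the $\pi_6(S^4)\cong\Z_2$ ambiguity is incorrect: $\eta^2\colon S^6\to S^4$ is \emph{not} stably nullhomotopic---it generates $\pi_2^s\cong\Z_2$. So the suggestion of absorbing that $\Z_2$ into a framing twist in $\pi_6(\operatorname{SO})$ (which is in any case the trivial group, so there is nothing to absorb into) does not go through. It may still be true that the stable framing of $M$ forces the $\pi_6(S^4)$-component of the attaching class to vanish, but that requires an actual obstruction-theoretic argument and cannot be waved away by stable nullity. Your fallback via the surgery exact sequence for $S^4\times S^7$ (using $L_{11}(\Z)=0$ and the stable-framing hypothesis to pin down the normal invariant) is a legitimate third approach and is probably workable, but as written it is only a sketch, and it too diverges from the paper's proof.
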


\begin{proof}
    The stably framed hypothesis means that the direct sum of the tangent bundle with some trivial bundle $\epsilon^k$ is itself trivial:
    \begin{equation}
        \tau(M) \oplus \epsilon^k \cong M \times \R^{k+11}
    \end{equation}

    By Whitehead's Theorem, the first nontrivial homology group of a simply connected space is spherical, i.e.\ $\pi_4(M) \ra H_4(M;\Z) \ra 0$ is onto. Let $\alpha: S^4 \ra M$ represent a generator of $H_4(M;\Z)$. By general position we may assume that $\alpha$ is a smooth embedding. Consider the sum of tangent and normal bundles to $\alpha(S^4)$
    \begin{equation}
        \tau(\alpha(S^4)) + \nu(\alpha(S^4)) = \tau_M \vert_{\alpha(S^4)}
    \end{equation}

    Since the first and third bundles are stably trivial, so is the second. However, $\nu(\alpha(S^4))$ has fiber dimension 7, greater than the dimension (4) of the base, so the normal bundle is in the trivial range:
    \begin{equation}
        \nu(\alpha(S^4)) \cong S^4 \times \R^7
    \end{equation}

    The triviality of $\nu(\alpha(S^4))$ allows us to do framed surgery on $\alpha(S^4)$. That is, delete from $M$ a smooth $S^4 \times D^7$-neighborhood of $\alpha(S^4)$, and glue back a copy of $D^5 \times S^6$, since both pieces have boundary $S^4 \times S^6$. Since $\pi_4(\operatorname{SO}(7))$ is in the stable range, $7 \geq 4+2$, $\pi_4(\operatorname{SO}(7)) \cong \pi_4(\operatorname{SO}) \cong 0$, and the choice of gluing is unique. Denote the (closed) surgered manifold by $M^\pr$.

    It is easy to see that $M^\pr$ is some 11-dimensional homotopy sphere. Because the surgery was below codimension 2, the fundamental group is still trivial: $\pi_1(M^\pr) \cong 0$. Furthermore, the standard pair of Mayer-Vietoris sequences introduced in \cite{mk63} to compute the homological effect of surgery shows that the class $\alpha$ has been killed, capped off by $D^5 \times \mathrm{pt}$, and likewise its Poincare dual class in dimension 7 is punctured by the deletion of $\alpha(S^4)$ and also dies. The result is a simply connected homology sphere which by another celebrated theorem of Whitehead is a homotopy sphere, $\te$.

    $\te$ contains the framed 6-sphere $S^6 \times D^5$, just glued in. Now we need a lemma from high-dimensional knot theory.

    \begin{lemma}\label{lm:frameds6}
        A framed $S^6$, call it $\beta$, embedded in $S^{11}$, or any 11-dimensional homotopy sphere $\te$, is isotopically trivial in the sense that it is the boundary of a $D^7$ smoothly embedded in $\te$.
    \end{lemma}

    From the lemma, we can now understand $M$ by \emph{reversing} the surgery just performed. The reverse surgery on $S^6 \times D^5 \subset M^\pr$ recovers $M$, but by the lemma, we know that the $S^6$ is unknotted, i.e.\ isotopic to the standard inclusion $S^6 \subset D^{11} \subset \te$. Surgery on the standard $S^6 \subset S^{11}$ yields $S^4 \times S^7$, and similarly surgery on a standard $S^6 \subset D^{11} \subset \te$ yields $S^4 \times S^7 \# \te$. This is because the reverse surgery and the connected sum with the exotic sphere are \emph{disjoint} and therefore commute. This completed the proof of the theorem module the lemma.
\end{proof}

\begin{proof}[Proof of Lemma \ref{lm:frameds6}]
    The proof strategy is to first construct a concordance from $\beta$ to the standard embedding $\gamma: S^6 \subset D^{11} \subset \te$ and then apply Hudson's \cite{hudson69} Theorem: ``concordance implies isotopy'' which states that in codimension $\geq 3$ concordances are themselves isotopic to isotopies.

    This is the terminology: Given embeddings $f$, $g: P^p \hookrightarrow Q^q$ a concordance between them is an embedding $F: P \times I \ra Q \times I$, $I = [0,1]$ so that $F(P,0) = f(p) \times 0$ and $F(q,1) = f(q) \times 1$. An isotopy is a concordance $F$ with the additional property of being \emph{level preserving}, $F(p,t) \in Q \times t$, for all $t \in [0,1]$. We will work in a bounded context (also covered by \cite{hudson69}) where $\de P \neq \varnothing$ and the concordance we construct is already an isotopy (in fact constant) over $\de P$. Then \cite{hudson69} gives us an isotopy relative to $\de P$. The theorem \cite{hudson69} holds when the codomension $q - p \geq 3$, $q \geq 5$, and in all categories, Top, PL, and Diff. We work in Diff. The goal is to produce a concordance between $\beta$ and $\gamma$ (the standard $S^6$ unknot in $D^{11}$).

    The first step is to build a 7D normally framed manifold $N \subset D^{11} \times I$ with $\de N = \de_0 N \cup \de_1 N$ with $\de_0 N = \gamma(S^6) \times 0$ and $\de_1 N = \beta(S^6) \times 1$. Once this is done powerful tools from surgery theory are available to modify $N$ to a concordance $S^6 \times I \subset D^{11} \times I$.

    Given $p: S^6 \times D^5 \ra D^{11}$, let $(C,\de)$ denote its closed complement with $\de_-(C) = S^6 \times S^4$ and $\de_+(C) = \de D^{11} = S^{10}$. Consider the map $\delta$ classifying the 4th cohomology:
    \begin{center}
        \begin{tikzpicture}[scale=1.4]
            \node at (0,0) {$\de_- C$};
            \draw[->] (0.4,0) -- (1.6,0);
            \node at (1,0.15) {\scriptsize{$\pi$}};
            \node at (1.9,0.05) {$S^4$};
            \node at (0,0.5) {$\hookuparrow$};
            \node at (1.9,0.5) {$\hookuparrow$};
            \node at (0,1) {$C$};
            \draw[->] (0.4,1) -- (1.3,1);
            \node at (2,1) {$K(\Z,4)$};
            \node at (0.85,1.15) {\scriptsize{$\delta$}};
            \draw[dashed,->] (0.2,0.8) -- (1.6,0.2);
            \node at (0.9,0.65) {\scriptsize{$\epsilon$}};
            \node at (1.85,-0.7) {$F$};
            \draw[->] (1.85,-0.5) -- (1.85,-0.2);
        \end{tikzpicture}
    \end{center}

    $\pi$ is the projection: $\de_-(C) = S^6 \times S^4 \ra S^4$ and the right inclusion represents the 0- and 4-cell in the cell structure of $K(\Z,4)$. The homotopy fiber $F$ of this map is also included in the diagram as well as the dotted arrow $\epsilon$, representing the lifting problem. The only non-trivial obstruction to constructing $\epsilon$ lies in: $H_7(C, \de_- C; \pi_6(F)) \cong \Z_2$, as computed from the exact sequence of the fibration we have
    \begin{center}
        \begin{tikzpicture}
            \node at (0,0) {$\pi_7(K(\Z,4)) \ra \pi_6(F) \ra \pi_6(S^4) \ra \pi_6(K(\Z,4))$};
            \node[rotate=-90] at (-2.8,-0.5) {$\cong$};
            \node at (-2.8,-0.9) {0};
            \node[rotate=-90] at (1,-0.5) {$\cong$};
            \node at (1.1,-0.95) {$\Z_2$};
            \node[rotate=-90] at (3.5,-0.5) {$\cong$};
            \node at (3.5,-0.9) {0};
        \end{tikzpicture}
    \end{center}

    In fact, this potential $\Z_2$ obstruction vanishes as the map $\pi$ already provides a lifting over the 6-sphere, $S^6 \times \ast$.

    According to the Pontryagin Theorem construction, making $\epsilon$ transverse to the ``north pole'' of $S^4$ produces a normally framed 7D Seifert surfaces in $D^{11}$ with boundary $\beta(S^6)$. Thinking of $D^{11}$ now as $D^{11} \times 0$ we may push up a small round patch $P$ in $S$ up to the 1-level of $D^{11} \times I$ so that $S \setminus P$ becomes $N$, the desired normally framed cobordism.

    \begin{figure}[ht]
        \centering
        \begin{tikzpicture}[scale=1.1]
            \draw (-1,0) arc(0:-180:2 and 0.5);
            \draw[dashed] (-1,0) arc(0:180:2 and 0.5);
            \draw (-1,0) -- (-1,3);
            \draw (-5,0) -- (-5,3);
            \draw (-3,3) ellipse (2 and 0.5);
            \draw (-3,3) ellipse (0.2 and 0.15);
            \draw (-4,0) to[out=90,in=160] (-3.3,0.3) to[out=-20,in=200] (-2.5,0.3) to[out=20,in=90] (-2,0);
            \draw (-4,0) to [out=-90,in=170] (-3.7,-0.3) to[out=-10,in=200] (-3.4,-0.2) to[out=20,in=180] (-2.3,-0.3) to[out=0,in=-90] (-2,0);
            \draw (-4,0) to[out=90,in=-90] (-3.2,3);
            \draw (-2,0) to [out=90,in=-90] (-2.8,3);
            \draw (-2.5,1.3) arc (90:-90:0.1 and 0.2);
            \draw (-2.45,1.26) arc (90:270:0.07 and 0.16);
            \draw (-3.55,1.3) arc (90:270:0.1 and 0.2);
            \draw (-3.6,1.26) arc(90:-90:0.07 and 0.16);
            \node at (-1.5,0) {$\beta(S^6)$};
            \node at (-3,1.5) {$N$};
            \node at (-2,3) {$\gamma(S^6)$};
            \draw[decorate,decoration={brace,amplitude=10pt}] (-5.2,0) -- (-5.2,3) node [midway, xshift=-2.75em] {$D^{11} \times I$};
            \node at (-6,3) {$D^{11} \times 1$};
            \node at (-6,0) {$D^{11} \times 0$};
            
            \draw (1,1.5) -- (2.8,3) -- (5.8,1.2) -- (4,-0.3) -- cycle;
            \draw[fill=black] (1.9,2.25) circle (0.2ex);
            \node at (1.4,2.1) {$\gamma$};
            \node at (1.8,2.8) {$D^{11} \times 1$};
            \node at (4.7,2.4) {$D^{11} \times I$};
            \draw[fill=black] (4.9,0.45) circle (0.2ex);
            \node at (5.1,0.3) {$\beta$};
            \node at (6.3,1) {$D^{11} \times 0$};
            \draw (1.9,2.25) -- (3.9,1.05) to[out=-20,in=160] (4.35,1.1) to[out=-30,in=150] (4.45,0.45) to[out=-30,in=150] (4.9,0.45);
            \node at (4.7,1.2) {$N$};
            \node at (3.7,-0.7) {same picture dimensionally reduced};
        \end{tikzpicture}
        \caption{}
    \end{figure}

    Since $N$ is stably framed and of odd dimension $\geq 5$ there is no obstruction to doing simply-connected surgery $(L_7(\{e\}) = 0)$ to normally cobord $N$, rel boundary, to a product $N^\pr \cong S^6 \times I$, call the normal cobordism $\lbar{N}$. (For experts the target of this surgery problem is $(S^6 \times I, \de)$ and we construct a normal cobordism $W$ to the identity as illustrated in Figure \ref{fig:cobordismw}.)

    \begin{figure}[ht]
        \centering
        \begin{tikzpicture}[scale=1.1]
            \draw (1,1.5) -- (2.8,3) -- (5.8,1.2) -- (4,-0.3) -- cycle;
            \draw (1,1.5) -- (1,1.1) -- (4,-0.7) -- (4,-0.3);
            \draw (5.8,1.2) -- (5.8,0.8) -- (4,-0.7);
            \draw (1.9,2.25) -- (3.9,1.05) to[out=-20,in=160] (4.35,1.1) to[out=-30,in=150] (4.45,0.45) to[out=-30,in=150] (4.9,0.45);
            \draw (1.65,2.05) -- (3.7,0.8) to[out=-20,in=160] (4.15,0.825) to[out=-30,in=150] (4.25,0.2) to[out=-30,in=150] (4.65,0.25);
            \draw (1.9,2.25) -- (1.9,4.25) -- (4.9,2.45) -- (4.9,0.45);
            \draw (1.65,2.05) -- (1.65,4.05) -- (4.65,2.25) -- (4.65,0.25);
            \draw (1.65,4.05) -- (2.15,4.45) -- (5.15,2.65) -- (4.65,2.25);
            \draw[dashed] (2.15,4.45) -- (2.15,2.45);
            \draw (5.15,2.65) -- (5.15,0.65);
            \path[gray, pattern = north west lines] (1.65,2.05) -- (3.7,0.8) to[out=-20,in=160] (4.15,0.825) to[out=-30,in=150] (4.25,0.2) to[out=-30,in=150] (4.65,0.25) -- (4,-0.3) -- (1,1.5) -- cycle;
            \path[gray, pattern = north east lines] (1.65,2.05) -- (3.7,0.8) to[out=-20,in=160] (4.15,0.825) to[out=-30,in=150] (4.25,0.2) to[out=-30,in=150] (4.65,0.25) -- (4.65,2.25) -- (1.65,4.05) -- cycle;
            \node at (3.5,-1) {normal cobordism on $N$};
            
            \draw (7.5,0) -- (11.5,0) -- (11.5,0.5) -- (7.5,0.5) -- cycle;
            \draw (9,0.5) -- (9,2.5) -- (10,2.5) -- (10,0.5);
            \draw (9.5,2.5) -- (9.5,0);
            \draw[fill=black] (9.5,0) circle (0.2ex);
            \path[gray, pattern = north east lines] (7.5,0.4) -- (9.1,0.4) -- (9.1,2.5) -- (8.9,2.5) -- (8.9,0.6) -- (7.5,0.6) -- cycle;
            \path[gray, pattern = north east lines] (11.5,0.4) -- (9.9,0.4) -- (9.9,2.5) -- (10.1,2.5) -- (10.1,0.6) -- (11.5,0.6) -- cycle;
            \node at (9.5,-0.3) {$N$};
            \node at (9.2,3) {$A =$ ``available boundary''};
            \draw[->] (7.8,2.8) -- (8.7,2.3);
            \draw[decorate,decoration={brace,amplitude=10pt}] (7.2,0) -- (7.2,2.5) node [midway, xshift=-1.6em] {$Z$};
            \draw[decorate,decoration={brace,amplitude=10pt}] (6,2.5) -- (6,0);
            \node at (11,1.5) {$W$};
            \draw[->] (10.7,1.5) -- (9.6,1.5);
            \node at (12.5,0) {$D^{11} \times I$};
            \draw[->] (11.95,0) -- (11.6,0);
            \node at (11.5,2.2) {$S^6 \times D^5 \times I$};
            \draw[->] (11.5,2.4) to[out=150,in=30] (9.6,2.6);
            \node at (9.5,-0.8) {pictured even more};
            \node at (9.5,-1.2) {dimensionally reduced};
            \node at (0,0) {$\hspace{1em}$};
        \end{tikzpicture}
        \caption{}\label{fig:cobordismw}
    \end{figure}

    The right figure of Figure \ref{fig:cobordismw} in similar arguments has been called ``Browder's top hat'' after William Browder.

    Letting $Z$ denote the entire 13D normal bordism and $A$ (hatched) is ``available boundary,'' the boundary not touching our carefully built product $S^6 \times I$, or $D^{11} \times I \times 0$, we use that the submanifolds we are working with e.g.\ $N \subset D^{11} \times I$ have codimension $5 \geq 3$. This means that
    \begin{equation}
        \pi_1(A) \xrightarrow{\cong} \pi_1(Z)
    \end{equation}
    is an isomorphism (in fact both groups are trivial).

    This allows us to invoke the $\pi_1$-$\pi_1$ theorem (\cite{wall70} Ch.\ 9) to build a final 14-dimensional normal cobordism on $(Z, \de Z \setminus A)$ solving a surgery problem to the trivial model
    \[
        (D^{11} \times I \times I, D^{11} \times I \times 0\ \coprod\ S^6 \times I \times D^5 \times 1)
    \]
    The result is that $A$ is normally coborded to $A^\pr$, a product, $A^\pr \cong (D^{11} \setminus (\operatorname{int}(S^6 \times D^5))) \times I$. Thus, gluing back the very top piece of the top hat, $A^\pr \cup S^6 \times D^5 \times I$, becomes the sought concordance between $\beta$ and $\gamma$. This completes the proof of Lemma \ref{lm:frameds6} and therefore Theorem \ref{thm:tangentbundle}.
\end{proof}

In theorem \ref{powerZ2} we produced a sequence of appropriately scaled Riemannian manifolds $\{M_i\}$, $i = 1,2,3,\dots$ exhibiting algebraic-$\Z_2$-(4,7) systolic freedom, and with trivial fundamental group.
We do not know, however, that the manifolds obey the conditions on homology groups needed for theorem
 Theorem \ref{thm:tangentbundle}. 
 There are two issues.  First, while we know the homology groups over $\Z_2$ of the codes of \cite{HHO}, lifting to integers may give rise to torsion in the homology group.  Second, the codes of \cite{HHO} have a number of ``logical qubits" that grows with  the number of ``physical qubits" $n$ (the dimension of the middle dimensional space in
 \ref{eq:chaincomplex}).  Thus the $\mathbb{Z}_2$ Betti numbers $b_4(M;\mathbb{Z}_2),b_7(M;\mathbb{Z}_2)$ grow with $n$.
The rational Betti numbers
$b_i(M)$, $i\neq 0,2,4,7,9,11$, vanish as we have killed $\pi_1$ so $b_1(M)=b_{10}(M)$ vanishes.
Note that the Betti numbers $b_3$ and $b_5$ vanish since the input chain complex $E_2 \rightarrow E_1 \rightarrow E_0$ for these codes has
vanishing second and zeroth homology\footnote{This is a property of those specific codes, but in general nontrivial zeroth or second homology for that chain complex implies redundant stabilizers which can be dropped without changing the distance of the code.}.  However, in general it is possible that the lifted chain complex $\tilde E_2 \rightarrow \tilde E_1 \rightarrow \tilde E_0$ may have torsion in its first or zeroth homology so $M$ may have torsion in its third or fourth homology.
 
We conjecture that
there is some sparse lift of the codes of \cite{HHO,BE,LD} that has no torsion in the homology group.
Further, we conjecture that one can ``kill" unwanted homology by modifying the lifted code so that $H_4(M;\Z),H_7(M;\Z)\cong \Z$ without reducing the distance by more than polylogarithmic factors.  Note that ``killing" homology is something only a
topologist would wish for as it reduces the code's rate.
In section \ref{liftingsection}, we discuss this possibility further. 

It is not clear whether or not $b_2(M)$ vanishes for the codes of \cite{HHO,LD,BE}, and it may depend on the particular pairing of
$\frac{1}{2}$-edges chosen in our construction of $5$-cells.
However, even if $b_2(M)$ does not vanish\footnote{We thank the referee for pointing out that $b_2(M)$ might not vanish.}, it is possible to modify our construction so that $b_2$ vanishes, at the cost of worsening the power in algebraic systolic freedom.  Let $L$ be a handlebody which is a copy of all 0-, 1-, 2-handles in $ZQX^+$.
Recall the 1-handles in come from the multiplicity of balls removed from the $S^4$’s in building $X$ (in the construction of $QX$) and from the multiple connected components of the graphs $G$ removed from the $S^5$’s in building $Z$ (in the construction of $ZQX$); the 2-handles in $ZQX$ are dual to $b_1(G)$, and ``close the holes" left by deleting these graphs from the $S^5$’s\footnote{In detail, $H_1(G)$ is Alexander dual to $H^3(B^5\setminus G)$ which is Lefschetz dual to $H_2(B^5\setminus G, \partial(B^5\setminus G))$, a basis of which is spanned by the cores of the 2-handles.}, and $ZQX^+$ has additional $2$-handles to kill $\pi_1$.  Since $ZQX^+$ was built to be stably framed, $L$ has vanishing Stiefel-Whitney numbers and is in fact diffeomorphic to a standard model: a boundary connected sum of $S^1 \times D^{10}$’s and $S^2 \times D^9$’s. The only 3-handles in the description of $ZQX^+$ are from $X$ and they don't attach (at all) to the 2-handles, thus there are no relations: $H_2(ZQX^+)$ is freely generated by these 2-handles. Consequently the inclusion maps $H_2(L) \rightarrow H_2(ZQX^+) \rightarrow H_2(M)$ are isomorphisms. By theorem 2 of \cite{GG} we can embed the $2$-spine of $L$ in a $5$-ball $B^{11}$ with volume O$(N^{11/9})$.  Thicken the embedding of ${\rm spine}(L) \hookrightarrow B^{11}$, to obtain a copy of $L$ smoothly embedded in $B^{11}$. Now let $ZQX^{++}=(ZQX^+)\cup(B^{11})$, the union over $L$.  Gluing on this $11$-ball has the effect of killing $H_2$ (and no other modification to the homology). This allows us to create a variant, also denoted by $M$,  $M:=D(ZQX^{++})$ which now has nonvanishing Betti numbers only in dimensions 0, 4, 7, and 11.

Cellulate the $5$-ball $B^{11}$ with bounded local geometry.  The number of cells has increased from $O(N\,{\rm polylog}(N) )$ to $O(N^{11/9} \,{\rm polylog}(N))$, but using the codes of \cite{HHO} this still gives algebraic systolic freedom; the systolic freedom is even stronger using the codes of \cite{LD}.
Item 5 of Theorem \ref{generaltemplate} no
longer holds due to added cells in the $11$-ball.  Instead, the portion of the cellular chain
complex:
$${\rm span}_{\mathbb{Z}}(5-{\rm handles}) \xrightarrow{\rm attaching}
{\rm span}_{\mathbb{Z}}(4-{\rm handles}) \xrightarrow{\rm attaching}
{\rm span}_{\mathbb{Z}}(3-{\rm handles})$$
is isomorphic to a direct sum of
$\tilde E_2 \rightarrow \tilde E_1 \rightarrow \tilde E_0$ with a chain complex from the $11$-ball and so the lower bounds on systole and cosystole still hold, as any $4$-chain with vanishing boundary must have its boundary vanish in both terms in the direct sum separately.

\begin{prop}
\label{prop181}
If additional checks can be added to the code family of \cite{HHO} to make $b_1({\rm code})=1$
and if this can be lifted to a sparse torsion-free chain complex without reducing distance by more than polylogarithmic
factors, then there is a family of triangulations (appropriately scaled metrics) 
of
$S^4 \times S^7 \# \Theta$
exhibiting algebraic $\mathbb{Z}_2$-$(4,7)$ systolic freedom, where $\Theta \in \boldsymbol\Theta_{11}$.

In the case of triangulations the proposition does not have to mention the possibility of a homotopy sphere. In the PL category all homotopy spheres are standard\cite{stallings1960polyhedral}. 
\end{prop}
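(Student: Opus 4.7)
The plan is to assemble the machinery developed in the preceding sections and verify that, under the stated hypotheses, the resulting 11-manifold satisfies the homological hypotheses of Theorem \ref{thm:tangentbundle}. Starting from the enhanced code family of \cite{HHO} whose $\mathbb{F}_2$ chain complex $E_2 \to E_1 \to E_0$ now has $b_1(\text{code})=1$, I would take the hypothesized sparse, torsion-free integral lift $\tilde E_2 \to \tilde E_1 \to \tilde E_0$, and feed it into the manifold construction of Theorem \ref{generaltemplate}, passing through $X$, $QX$, $ZQX$, the $\pi_1$-killing step $ZQX^+$, the 11-ball attachment $ZQX^{++}$, and finally $M \coloneqq D(ZQX^{++})$.

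The heart of the argument is verifying the homology list required by Theorem \ref{thm:tangentbundle}. By (the modified) item 5 of Theorem \ref{generaltemplate}, the span of $3$-, $4$-, and $5$-handles gives a chain complex isomorphic to $\tilde E_2 \to \tilde E_1 \to \tilde E_0$ direct-summed with a chain complex coming from the glued-in $B^{11}$. Using torsion-freeness and $b_1(\text{code})=1$, this yields $H_4(M;\mathbb{Z})\cong H_1(\tilde E)\cong \mathbb{Z}$, while $H_3(M;\mathbb{Z})\cong H_0(\tilde E)=0$ and $H_5(M;\mathbb{Z})\cong H_2(\tilde E)=0$ (the latter two vanish already at the level of the codes of \cite{HHO} and are preserved by the torsion-free lift, and the $B^{11}$ contribution is contractible in these degrees). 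The remaining conditions come for free: $\pi_1(M)=0$ from the $\pi_1$-killing step; $H_2(M;\mathbb{Z})=0$ is precisely what the $ZQX^{++}$ construction was designed to achieve (gluing in the $11$-ball over $L$ kills the free rank-$b_2$ group); and $H_i(M;\mathbb{Z})$ for $i=6,8,9,10$ vanish and $H_7\cong \mathbb{Z}$ by Poincar\'e duality on the closed orientable, in fact stably framed, 11-manifold $M$. Theorem \ref{thm:tangentbundle} then delivers a diffeomorphism $M\cong S^4\times S^7\,\#\,\Theta$ for some $\Theta\in\boldsymbol\Theta_{11}$; in the PL category, Stallings's theorem \cite{stallings1960polyhedral} trivializes $\Theta$, explaining the final remark.

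Systolic freedom is inherited from Theorem \ref{powerZ2}: the bounded local geometry of the metric/triangulation is preserved by the $B^{11}$-attachment (cellulating $B^{11}$ with bounded local geometry as discussed just before the proposition), the volume is inflated only to $O(N^{11/9}\,\text{polylog}(N))$, and the $(4,7)$-systoles are still controlled by $d_Z$ and $d_X$ because the direct-sum decomposition forces any essential $4$- or $7$-cycle to project nontrivially into the original $\tilde E$-summand, where Lemma \ref{lm:47systolic} applies. Using either \cite{HHO} or the stronger codes of \cite{LD}, the ratio $\Theta(d_Z\cdot d_X/\text{vol})$ still diverges polynomially, giving the claimed algebraic $\mathbb{Z}_2$-$(4,7)$-systolic freedom.

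The main obstacle, and the reason this is stated conditionally rather than as a theorem, is not the topology or systolic estimate but the coding-theoretic input: one must actually produce a sparse torsion-free integral lift of a modified \cite{HHO}-style family with $b_1=1$ and with distance only polylogarithmically reduced. Modulo that input, the topological step is controlled: the key verification is that the cellular chain complex of $M$ in degrees $3,4,5$ is exactly what item 5 of Theorem \ref{generaltemplate} predicts even after the $B^{11}$-gluing, so that torsion-freeness and the rank computation transfer cleanly and the hypotheses of Theorem \ref{thm:tangentbundle} are verified term by term.
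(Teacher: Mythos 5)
Your proposal is correct and matches the paper's (implicit) argument: the paper does not supply a separate proof paragraph for Proposition~\ref{prop181}, but the discussion immediately preceding it --- the $ZQX^{++}$ modification to kill $H_2$, the observation that item~5 of Theorem~\ref{generaltemplate} becomes a direct sum with the acyclic $B^{11}$ contribution, the count showing volume rises only to $O(N^{11/9}\,\mathrm{polylog}(N))$, and the appeal to Theorem~\ref{thm:tangentbundle} plus Stallings in the PL case --- is exactly what you have reconstructed. Your term-by-term verification of the homology hypotheses via the torsion-free lift with $b_1=1$, the $\pi_1$-killing step, the $B^{11}$-gluing, and Poincar\'e duality is precisely the intended route, and your remark on why the systolic estimate survives the direct-sum decomposition is also in agreement with the paper.
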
.

From Theorem \ref{thm:tangentbundle} we know that up to diffeomorphism there are 992 possibilities for these manifolds.\footnote{If we wished here to work in the PL category Theorem \ref{thm:tangentbundle} would tell us that there is only one PL homeomoprhisms type, $S^4 \times S^7$, among the $\{M_i\}$. This is because the ambiguity in the smooth structure disappears PL: all homotopy spheres in dimension $\geq 5$ are PL homeomorphic to an $S^n$ [S][Z].} Already this tells us that \emph{some} one of these smooth manifolds would have a family of appropriately scaled Riemannian metrics exhibiting this variety of systolic freedom. The next theorem brings us closer to showing algebraic-$\Z_2$-(4,7)-systolic freedom exists on $S^4 \times S^7$ itself.

\begin{thm}\label{thm:sectioncurvature}
    Let $M^d$, $d \neq 4$, be an appropriately scaled Riemannian manifold, ($\vert \mathrm{section\ curvature} \vert$ $= O(1)$ and injectivity radius $\geq \Omega(1)$) and let $\operatorname{area}_p(M)$ denote the infimal area of an essential $\Z_2$-cycle in $M$, $1 \leq p \leq d$. There is a constant $c>1$ depending only on the dimension $d$ of $M$ so that if $\te$ is a smooth homotopy sphere also with an appropriately scaled Riemannian metric of volume one, then for all $1 \leq p \leq d$ there exists an appropriately scaled Riemannian metric on $M \# \te$ so that:
    \[
        \frac{1}{c}\operatorname{area}_p(M \# \te) \leq \operatorname{area}_p(M) \leq c \operatorname{area}_p(M \# \te)
    \]
\end{thm}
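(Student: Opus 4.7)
The plan is to build an appropriately scaled metric on $M\#\theta$ via a bounded-geometry connect sum at a small fixed ball, and then to compare $p$-areas using the pinch map $M\#\theta\to M$ together with lemma \ref{push}.

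\textbf{Step 1: Constructing the metric.} Fix $r_0=r_0(d)>0$ smaller than the injectivity-radius lower bounds of $M$ and $\theta$. Pick points $p_M \in M$ and $p_\theta \in \theta$, remove the geodesic balls $B:=B_{r_0}(p_M)$ and $B':=B_{r_0}(p_\theta)$, conformally deform each metric in a thin collar so the boundary sphere becomes a standard round $S^{d-1}$ of radius $r_0$, and glue by an orientation-reversing isometry. After smoothing in the collar, the resulting metric on $M\#\theta$ has sectional curvatures and injectivity radius bounded in terms only of $d$ and the original bounds. The hypothesis $d\neq 4$ is invoked here via the disk lemma to guarantee that the smooth type of $M\#\theta$ is independent of all choices.

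\textbf{Step 2: $\operatorname{area}_p(M\#\theta)\leq c\cdot\operatorname{area}_p(M)$.} Let $\alpha$ be an essential smooth $\Z_2$-$p$-cycle in $M$ of near-minimal area. When $p<d$, apply lemma \ref{push} inside the slightly enlarged ball $\tilde B:=B_{2r_0}(p_M)$, which is bi-Lipschitz equivalent to a Euclidean ball with constants depending only on $d$: replace $\alpha\cap\tilde B$ by a chain on $\partial\tilde B$ with the same boundary, at the cost of a bounded multiplicative factor in area. The modified cycle $\alpha'$ is supported in $M\setminus B$, is homologous to $\alpha$ in $M$ (since $\tilde B$ has trivial $\Z_2$-homology in degree $p$), and includes into $M\#\theta$. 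It remains essential: the pinch map $\pi:M\#\theta\to M$ collapsing $\theta\setminus B'$ to a point is a homotopy equivalence (since $\theta$ is a homotopy sphere implies $\theta\setminus\{\mathrm{pt}\}$ is contractible) and restricts to the identity on $M\setminus B$, so $\pi_*[\alpha']=[\alpha]\neq 0$ in $M$. The case $p=d$ is immediate from $\operatorname{vol}(M\#\theta)=\operatorname{vol}(M)+O(1)$ and the uniform lower bound $\operatorname{vol}(M)\geq c_d>0$ supplied by the injectivity-radius hypothesis.

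\textbf{Step 3: $\operatorname{area}_p(M)\leq c\cdot\operatorname{area}_p(M\#\theta)$.} Let $\alpha$ be an essential near-minimal $p$-cycle in $M\#\theta$, perturbed to be transverse to the gluing sphere $S=\partial B=\partial B'$, so $\alpha=\alpha_M+\alpha_\theta$ with $\alpha_\theta$ supported in $\overline{\theta\setminus B'}$. Since $\theta$ is appropriately scaled with volume one, Cheeger--Gromov compactness shows the family of closures $\overline{\theta\setminus B'}$ is precompact, hence a Riemannian analogue of lemma \ref{push} applies on each with a constant depending only on $d$. Push $\alpha_\theta$ to a chain $\alpha_\theta'$ on $S$ with $\partial\alpha_\theta'=\partial\alpha_\theta$ and area $O(\operatorname{area}(\alpha_\theta))$. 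The resulting cycle $\alpha'=\alpha_M+\alpha_\theta'$ lies in $(M\setminus\operatorname{int}(B))\cup S\subset M$, and is homologous to $\alpha$ in $M\#\theta$ (the difference is a $p$-cycle supported in the topological ball $\overline{\theta\setminus B'}$, which has trivial reduced $\Z_2$-homology in positive degrees $<d$; here one uses $d\geq 5$ or $d\leq 3$ so that a smooth homotopy ball is topologically a ball). Applying $\pi$, which is the identity on $\alpha'$, shows $[\alpha']=\pi_*[\alpha]\neq 0$ in $M$, so $\alpha'$ is essential with area $O(\operatorname{area}(\alpha))$.

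\textbf{Main obstacle.} The hard part is the uniform Riemannian version of lemma \ref{push} in Step 3: the constants must depend only on $d$, not on the specific homotopy sphere $\theta$. This is supplied by Cheeger--Gromov compactness applied to the class of appropriately scaled unit-volume smooth $d$-manifolds, which reduces the family of possible $\theta$'s to a precompact one in the $C^{1,\alpha}$ topology; a uniform bi-Lipschitz comparison with Euclidean balls of bounded diameter follows, and the proof of lemma \ref{push} then applies verbatim with a $d$-dependent constant. Step 2's analogous pushing inside $M$ is easier since the ball $\tilde B\subset M$ has bounded geometry directly from the appropriately-scaled hypothesis.
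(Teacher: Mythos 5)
Your overall strategy---build a bounded-geometry connected sum near the tube, then compare cycles in $M$ and $M\#\theta$ by surgering near the gluing region---is the same as the paper's, but the technical route and, more importantly, the role you assign to the hypothesis $d\neq 4$ both differ, and the latter is a genuine mistake.

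The paper uses $d\neq 4$ to invoke the Kervaire--Milnor finiteness of $\boldsymbol\Theta_d$: for $d\neq 4$ there are only finitely many smooth homotopy $d$-spheres, so one can fix, once and for all, an appropriately scaled metric on each $\theta$, and the constant $c$ is then automatically uniform over the finitely many possibilities. Your explanation of the hypothesis---``via the disk lemma to guarantee that the smooth type of $M\#\theta$ is independent of all choices''---is incorrect: Palais/Cerf disk theorems hold in every dimension, and connected sum is always well defined; the issue in $d=4$ is the unknown cardinality of $\boldsymbol\Theta_4$. You instead try to obtain uniformity from Cheeger--Gromov $C^{1,\alpha}$-precompactness of the class of appropriately scaled, unit-volume metrics. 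That route is not unreasonable (precompactness also forces only finitely many diffeomorphism types to occur), but it is a heavier hammer, and the ``Riemannian analogue of Lemma \ref{push}'' you need on all of $\overline{\theta\setminus B'}$ is not established anywhere in the paper and would require an argument---Lemma \ref{push} as stated is about a standard Euclidean ball, and what you have is a bounded-geometry homotopy ball of diameter $O(1)$, not bi-Lipschitz to a round ball by any lemma you cite. Your own ``Main obstacle'' paragraph correctly identifies this as the hard point but does not resolve it.

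The paper avoids that obstacle entirely: instead of pushing the $\theta$-side chain across the whole homotopy ball, it cuts the cycle along a coordinate-slice sphere $S_r$, $\tfrac12<r<1$, \emph{inside the Euclidean annulus} $B_1\setminus B_{1/2}\subset M$ where the geometry is explicitly standard. The slice $r$ is chosen by the co-area formula so that the $(p-1)$-area of $\delta\cap S_r$ is controlled by the $p$-area of $\delta$, and then the trace is capped off by a cone at a probabilistically chosen vertex (exactly the content of Lemma \ref{push} or Lemma \ref{pushalt}), which costs only a factor of $\pi$. This keeps every geometric estimate at $O(1)$ scale near the tube, with no need for a global Riemannian pushing lemma. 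Your pinch-map bookkeeping for essentiality is fine and is equivalent to the paper's appeal to the acyclicity of $\theta$; but I would adopt the paper's cut-and-cone-in-the-annulus mechanism in place of your Step 3, both because it is self-contained given Lemma \ref{push}/\ref{pushalt}, and because it removes the need for Cheeger--Gromov.
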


\begin{proof}
    For each dimension $d \neq 4$ there are only finitely many smooth structures (up to diffeomorphism) on $S^d$, for each one $\te$ pick an appropriately scaled Riemannian metric. The idea is to make a geometrically controlled connected sum:
    \[
        M \ra M \# \te
    \]
    and estimate, forwards and backwards, the $p$-area of one side from the other.

    Since both $M$ and $\te$ are already appropriately scaled we may assume they each contain a Euclidean $d$-ball of radius $=1$, and that the connected sum is formed by taking a tube of radius $\frac{1}{4}$, $S^{d-1}_{1/4} \times I$ and blending its ends into the annulus in $M$ and $\te$, parameterized in their Euclidean balls as: $B_1^d \setminus B^d_{1/2}$. Curvatures and injectivity radii remain controlled under the connected sum.

    \begin{figure}[ht]
        \centering
        \begin{tikzpicture}[scale=1.2]
            \draw (0.2,1.6) arc (0:-180:0.2 and 0.075);
            \draw[dashed] (0.2,1.6) arc (0:180:0.2 and 0.075);
            \draw (0.2,0) -- (0.2,1.5);
            \draw (0.2,0) arc (0:-180:0.2 and 0.1) -- (-0.2,1.5);
            \draw[dashed] (0.2,0) arc (0:180:0.2 and 0.1);
            \draw (0.2,0) to[out=-90,in=160] (0.4,-0.2);
            \draw (-0.2,0) to[out=-90,in=20] (-0.4,-0.2);
            \draw (0.2,0.2) arc(65:-242:0.45 and 0.3);
            \draw (0.2,0.45) arc(75:-255:0.75 and 0.55);
            \draw (0,-0.37) -- (0,-0.62);
            \draw (0.3,-0.3) -- (0.5,-0.5);
            \draw (-0.3,-0.3) -- (-0.5,-0.5);
            \draw (0.45,-0.075) -- (0.75,-0.075);
            \draw (-0.45,-0.075) -- (-0.75,-0.075);
            \draw (0.35,0.11) -- (0.56,0.3);
            \draw (-0.35,0.11) -- (-0.56,0.3);
            \draw (-0.3,0.42) to[out=200,in=90] (-2,-1) to[out=-90,in=180] (0,-2) to[out=0,in=-90] (2,-1) to[out=90,in=-20] (0.3,0.42);
            \node at (0,-1.2) {$M$};
            
            \draw (0.2,1.5) to[out=90,in=205] (0.37,1.81);
            \draw (-0.2,1.5) to[out=90,in=-25] (-0.37,1.81);
            \draw (0.27,1.7) arc (-50:235:0.42 and 0.2);
            \draw (0.31,1.5) arc (-63:254:0.7 and 0.4);
            \draw (0.31,1.5) arc (-63:-72:0.7 and 0.4);
            \draw (0.42,1.85) -- (0.68,1.85);
            \draw (-0.42,1.85) -- (-0.69,1.85);
            \draw (0.3,2) -- (0.4,2.18);
            \draw (-0.3,2) -- (-0.4,2.18);
            \draw (0,2.07) -- (0,2.25);
            \draw (0.35,1.75) -- (0.5,1.6);
            \draw (-0.35,1.75) -- (-0.5,1.6);
            \draw (0.3,1.5) to[out=20,in=-90] (2,3) to[out=90,in=0] (0,4) to[out=180,in=90] (-2,3) to[out=-90,in=160] (-0.3,1.5);
            \node at (0.,2.8) {$\theta$};
            
            \node at (2.3,1) {annuli};
            \draw[->] (1.8,0.95) -- (0.6,1.5);
            \draw[->] (1.8,0.95) -- (0.6,0.4);
            \node at (-2.3,1) {blending regions};
            \draw[->] (-1,1) to[out=0,in=245] (-0.3,1.65);
            \draw[->] (-1,1) to[out=0,in=105] (-0.3,-0.1);
            \node at (0.9,1) {tube};
            \draw[->] (0.5,1) -- (0.3,1);

            \node at (3.5,0) {$\hspace{0.5em}$};
        \end{tikzpicture}
        \caption{}
    \end{figure}

    Because $\te$ is acyclic any essential $p$-cycle $\delta$ in $M \# \te$ can be cut off along a coordinate-slice sphere $S_r$, $\frac{1}{2} < r < 1$, in the annulus $(B_1 \setminus B_{1/2}) \subset M$ and cut off by a well-chosen cone in $S_4$ (see Lemma \ref{push}) without making it trivial. Similarly, a $p$-cycle $\delta$ in $M$ can be modified to lie in $M \# \te$ again by cutting it off along a well-chosen $S_r$, $\frac{1}{2} < r < 1$, and closing it by a cone also in $S_r$.

    These operations, cutting and patching in a cone, must respect $p$-area up to a multiplicative factor. This is accomplished by 1) using the co-area formula (page 24, \cite{cm11}) to choose $S_r$, $\frac{1}{2} < r < 1$, so that $(p-1)\parea(\delta \cap S_4)$ is no more than comparable to $p\parea(\delta)$, and 2) using the probabilistic argument of Lemma \ref{push} or Lemma \ref{pushalt} to select the cone point $q \in S_r$ so that:
    \begin{equation}
        p\parea(\operatorname{cone}_q(\delta \cap S_r)) \leq \pi (p-1)\parea(\delta \cap S_r)
    \end{equation}
\end{proof}

The conclusion of Theorem \ref{thm:tangentbundle}, Proposition \ref{prop181}, and Theorem \ref{thm:sectioncurvature} is

\begin{thm}
If additional checks can be added to the code family of \cite{HHO} to make $b_1({\rm code})=1$ 
and if this can be lifted to a sparse torsion-free chain complex without reducing distance by more than polylogarithmic
factors,
    $S^4 \times S^7$ admits a sequence of appropriately scaled Riemannian metrics exhibiting algebraic-$\Z_2$-(4,7)-systolic freedom.
\end{thm}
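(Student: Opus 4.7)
The plan is to chain the three preceding results (Theorem \ref{thm:tangentbundle}, Proposition \ref{prop181}, and Theorem \ref{thm:sectioncurvature}), using finiteness of $\boldsymbol\Theta_{11}$ to pin down the exotic factor and then cancel it via its group-theoretic inverse. First, under the stated hypotheses, Proposition \ref{prop181} produces a sequence $\{M_i\}$ of appropriately scaled Riemannian $11$-manifolds exhibiting algebraic $\Z_2$-$(4,7)$-systolic freedom. Since $M_i$ has the homology and stable framing required by Theorem \ref{thm:tangentbundle}, we have $M_i \cong S^4 \times S^7 \# \Theta_i$ for some $\Theta_i$ in the finite group $\boldsymbol\Theta_{11}$ (of order $992$).

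By the pigeonhole principle, I pass to an infinite subsequence on which $\Theta_i$ is a single fixed element $\Theta \in \boldsymbol\Theta_{11}$. Let $\Theta^{-1}$ be its inverse under connected sum, and equip $\Theta^{-1}$ with some once-and-for-all chosen appropriately scaled Riemannian metric of volume $1$ (such a metric exists since $\Theta^{-1}$ is a closed smooth $11$-manifold). Now apply Theorem \ref{thm:sectioncurvature} with $M = M_i$ and $\te = \Theta^{-1}$: this yields appropriately scaled metrics on $M_i \# \Theta^{-1}$ whose $p$-areas for $p = 4,7$ differ from those of $M_i$ by at most the universal multiplicative constant $c = c(11)$.

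By associativity of connected sum, $M_i \# \Theta^{-1} \cong (S^4 \times S^7 \# \Theta) \# \Theta^{-1} \cong S^4 \times S^7$, so we have a sequence of appropriately scaled Riemannian metrics on $S^4 \times S^7$ itself. It remains to verify that the $(4,7)$-systolic ratio is not degraded: the numerator (product of minimal $4$- and $7$-areas) changes by at most a factor $c^2$ by Theorem \ref{thm:sectioncurvature}, while the $11$-volume grows by only an additive $O(1)$ (the volume of $\Theta^{-1}$ is fixed at $1$), which is negligible compared to the diverging volume of $M_i$. Hence the systolic ratio on $S^4 \times S^7$ is within a constant factor of that on $M_i$, and the same algebraic power-law divergence persists.

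There is no serious obstacle here: the argument is essentially bookkeeping on top of the three earlier results. The only subtlety worth flagging is the reliance on the pigeonhole step, which is what lets us treat the unknown homotopy sphere factor as a single fixed manifold that can be canceled; this in turn rests crucially on Kervaire--Milnor's finiteness of $\boldsymbol\Theta_{11}$ and on its group structure under connected sum.
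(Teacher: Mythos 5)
Your proposal is correct and is essentially the proof the paper intends: the paper itself simply states that the theorem is ``the conclusion of'' Theorem \ref{thm:tangentbundle}, Proposition \ref{prop181}, and Theorem \ref{thm:sectioncurvature}, and the paragraph between Proposition \ref{prop181} and Theorem \ref{thm:sectioncurvature} already foreshadows the pigeonhole step (``there are 992 possibilities \dots some one of these smooth manifolds''). Your chain --- pigeonhole on the finite group $\boldsymbol\Theta_{11}$ to fix $\Theta$, connect-sum with $\Theta^{-1}$ (the inverse in $\boldsymbol\Theta_{11}$) equipped with a fixed appropriately scaled metric, apply Theorem \ref{thm:sectioncurvature} to control the $4$- and $7$-areas, and note the additive $O(1)$ volume change is negligible --- is precisely the intended bookkeeping.
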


\subsection{Reducing the Dimension}
We close this section with some speculation on whether it is possible to reduce the dimension to smaller than $11$.  We have obtained such a large dimension partly because we enforced a certain ``separation" in dimension: $X$-stabilizers, qubits, and $Z$-stabilizers correspond to $3-$, $4-$, and $5-$``cells" and also dually to $6-$, $7-$, $8-$``cells", with $6>5$.  It may be possible to reduce the dimension if this separation is not enforced, though we have no specific construction.

Another possible way to reduce the dimension is to consider building manifolds from classical codes, i.e., codes with only one type of stabilizers, say $X$-stabilizers.  In this case, it may be possible to map $X$-stabilizers to $2$-cells and map qubits (though now perhaps we should say ``bits" instead) to $3$-cells and build a $7$-manifold.  This may give interesting constructions of manifolds using the rich literature on classical codes.  Further, since the codes of \cite{HHO} are, in a sense, constructed as circle bundles over a base that is a classical code, it may be possible to construct them as circle bundles over $7$-manifolds; we leave this for future work.

\section{Lifting}
\label{liftingsection}
In this section, we further consider the question of lifting.
Our main results are that every complex over $\Z_2$ admits some lift, and that the codes of \cite{HHO} admit a sparse lift.

Throughout this section, consider a chain complex $\cA$ with vector spaces $\cA_j$ and boundary operators $\partial_j$ from $\cA_j$ to $\cA_{j-1}$.  We regard these as vector spaces over $\F_2$.
The spaces $\cA_j$ have preferred bases, and basis vectors are termed cells; in the case of a quantum code, we have a three term complex (meaning, three different vector space), with cells corresponding to $Z$-stabilizers, qubits, and $X$-stabilizers respectively.  

In this section, the word ``cell" simply refers to an algebraic object: a basis vector for some vector space.  No assumption is made, for example, that a $1$-cell has only two $0$-cells in its boundary.

We say that a lift of the boundary operators is {\it admissible} if the square of the lifted operators vanishes.
Thus, a lift of a chain complex as defined below is an admissible lift of the boundary operators.

Since we will talk about complexes with different numbers of types of cells, we will use the term $k$-complex to refer to a complex with $j$-cells for $0\leq j \leq k$.  We emphasize that all complexes are purely algebraic objects: for example, a $1$-cell may have arbitrarily many $0$-cells in its boundary, and it is possible for the zeroth Betti number of the complex to vanish.

\subsection{Every Complex is Liftable}
Our first result is, however, that such a lift does always exist, if we do not worry about preserving sparsity. 
To give a precise definition of sparsity, say that a chain complex over $\F_2$ is $s$-sparse if every row and every column of every boundary operators has at most $s$ nonzero entries.  Say that a chain complex over $\mZ$ is $s$-sparse if every row and every column of every boundary operators has the sum of absolute values of its entries bounded by $s$.  One useful notation for this is to let the $\ell_1$ norm $|\cdot|_1$ denote the sum of absolute values of entries of a vector, and then the $\ell_1$ norm of every row and column vector must be bounded by $s$.

When we refer to preserving sparsity, we have in mind a sequence of chain complexes parameterized by some integer $n$ with the dimension of the vector spaces in the $n$-th complex bounded by $O(n)$.  When we say that $\cA$ is sparse, we mean that there is a sequence of complexes, all of which are $O(1)$-sparse.  When we say that a lift is sparse, we mean also that the lifted complexes are $O(1)$-sparse.  A weaker, but still desirable, condition might be that the lift is log-sparse, i.e., that the lifted complexes are $O(\log(n))$ sparse.

We will make several conjectures in this section.  All of these conjectures are of the form: for some sparse object $A$, there does not exist some other sparse object $B$.  This should always be interpreted as a conjecture about families, where $A$ is a family of $O(1)$-sparse objects.  The conjecture that the sparse $B$ does not exist in its weakest form is simply that no $O(1)$-sparse $B$ exists, but a stronger conjecture would be that no log-sparse or even $o(n)$-sparse $B$.

\begin{lemma}
\label{liftexists}
Every chain complex $\cA$ over $\F_2$ has a lift to some $\tilde \cA$ over $\mZ$.
Further, the lift can be done so that $\tilde \cA$ has no torsion in its homology or cohomology and has the same Betti numbers as does $\cA$, i.e., $H_j(\tilde \cA)$ and $H^j(\tilde \cA)$ are both isomorphic to $\mZ^{b_j(\cA)}$, where $b_j(\cA)$ are the $\F_2$ Betti numbers of $\cA$.
\begin{proof}
Very briefly, the proof is that by a sequence of row and column operations over $\F_2$, we can bring the boundary operators $\partial_j$ to a form in which the naive lift gives a chain complex without torsion.  Then, by undoing the lifted row operations, we obtain a chain complex $\tilde \cA$ that lifts $\cA$.

In detail,
consider over $\F_2$ the elementary operation of replacing some basis element of $\cA_j$ by the sum of that basis element and some other basis element.  This acts as an elementary column operation on $\partial_j$ and as an elementary row operation on $\partial_{j+1}$.
By a sequence of these operations, we can bring the operators $\partial_j$ to a form in which each operator has at most one nonzero element in each row and column.  Up to permutation of basis elements, which can also be implemented by row and column operations, this means that the $j$-th boundary operator is in the form of the following block matrix
$$D_j \equiv \begin{pmatrix} 0 & 0 & 0 \\ I & 0 & 0 \\ 0 & 0 & 0\end{pmatrix},$$
so that the first block of each $\cA_j$ is the range of the second block of $\cA_{j+1}$ under $\partial_{j+1}$, while the third block represents homology.

This means that $\partial_j = R_j D_j R_{j+1}^{-1}$, where $R_j$ is a product of elementary row operations.  
Lifting each elementary row operation by the naive lift and taking the product of those lifts gives a lift of $R_j$.  The lift $\tilde R_j$ is invertible over the integers since it is a product of matrices with determinant $1$, and its inverse is a lift of $R_j^{-1}$.  Of course, one may construct the inverse of $\tilde R_j$ by the product of the inverses of the lifted elementary row operations.

Lift $D_j$ by the naive lift to give $\tilde D_j$.  Note that $\tilde D_j \tilde D_{j+1}=0$.  Then $\tilde \partial_j = \tilde R_j \tilde D_j \tilde R_j^{-1}$ defines the chain complex $\tilde \cA$.
\end{proof}
\end{lemma}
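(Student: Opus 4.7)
The plan is to bring the chain complex into a very simple ``standard form'' by a change of basis over $\F_2$, observe that the naive lift of the standard form is automatically a chain complex over $\mZ$ with free homology and cohomology, and then transport this back via an integer-invertible lift of the change-of-basis matrices.

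First I would perform simultaneous Gaussian elimination on all the boundary operators. Because we work over the field $\F_2$, a standard fact says that any bounded chain complex of finite-dimensional vector spaces decomposes as a direct sum of elementary pieces $\F_2 \xrightarrow{1} \F_2$ and trivial pieces $\F_2 \to 0$. Equivalently, there exist invertible matrices $R_j$ over $\F_2$ such that $R_j^{-1}\, \partial_j \, R_{j+1}$ has the block form
\[
  D_j \;=\; \begin{pmatrix} 0 & 0 & 0 \\ I & 0 & 0 \\ 0 & 0 & 0 \end{pmatrix},
\]
where the three column blocks of $\cA_j$ correspond, respectively, to $\img(\partial_{j+1})$, a chosen complement mapped isomorphically onto $\img(\partial_j)$, and a chosen complement representing $H_j(\cA)$. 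I would build the $R_j$ incrementally, at each degree using the previously fixed basis of $\cA_{j-1}$ to pick a basis of $\cA_j$ aligned with the $\ker(\partial_j)/\img(\partial_{j+1})$ decomposition; the key bookkeeping point is that a column operation on $\partial_j$ and the corresponding row operation on $\partial_{j+1}$ represent the \emph{same} change of basis in $\cA_j$ and must be performed together.

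Second I would lift naively. Each elementary operation used to build $R_j$ --- swapping two basis vectors, or adding one basis vector to another --- has a determinant-$\pm 1$ naive lift to $\mZ$. Taking products of these lifts yields invertible integer matrices $\tilde R_j$ with integer inverses, each a lift of $R_j$. Define
\[
  \tilde\partial_j \;=\; \tilde R_j\, \tilde D_j\, \tilde R_{j+1}^{-1},
\]
where $\tilde D_j$ has the same block form as $D_j$ but viewed over $\mZ$. The identity $\tilde D_j \tilde D_{j+1} = 0$ holds on the nose, since the two identity blocks of the two matrices live in disjoint rows/columns; hence $\tilde\partial_j \tilde\partial_{j+1} = 0$, and reducing mod $2$ gives back $\partial_j$, confirming this is a lift.

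For the homology and cohomology claim, the complex $(\tilde \cA, \tilde D)$ is by construction a direct sum of elementary integer complexes $\mZ \xrightarrow{1} \mZ$ and trivial complexes $\mZ \to 0$; in each degree its homology and cohomology are free abelian with rank equal to the number of trivial summands, which is exactly $b_j(\cA)$. Conjugation by the invertible integer matrices $\tilde R_j$ is a chain isomorphism $(\tilde \cA, \tilde D) \cong (\tilde\cA, \tilde\partial)$ and so preserves torsion-freeness and ranks. I do not expect a serious conceptual obstacle; the main thing to handle carefully is the simultaneous reduction across all degrees (ensuring that the basis chosen for $\cA_j$ is consistent with both the elimination on $\partial_j$ and the elimination on $\partial_{j+1}$), but this is a straightforward induction once one adopts the direct-sum-of-elementary-complexes viewpoint.
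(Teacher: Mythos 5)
Your proposal is essentially the same as the paper's: bring the boundary operators to the block-diagonal standard form $D_j$ by a simultaneous change of basis $R_j$ over $\F_2$, lift each elementary operation to a determinant-$\pm 1$ integer matrix so that $\tilde R_j$ is invertible over $\mZ$, and define $\tilde\partial_j = \tilde R_j \tilde D_j \tilde R_{j+1}^{-1}$. Your ``direct sum of elementary complexes'' framing and your explicit verification that the resulting homology and cohomology are free of the right rank are slight expansions of what the paper says, but the underlying argument is identical (you also correct a small index typo in the paper's final formula, which should read $\tilde R_{j+1}^{-1}$ rather than $\tilde R_j^{-1}$).
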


Lemma \ref{liftexists} gives a lift of $\cA$ without torsion.  However, the resulting complex may not be sparse.  We next consider this question.

\subsection{Sparse Lifts of the Fiber Bundle Code (and Codes of \cite{LD,BE})}
First, it is worth noting that the fiber bundle code of \cite{HHO} has a lift to $\mZ$ which is sparse (indeed, it has the same sparsity as the complex over $\F_2$ does).  The code in that paper is constructed as a fiber bundle whose base is a classical LDPC code.  Adding a sign in a fairly obvious way (simply a sign needed in a boundary operator for a product complex over an arbitrary coefficient group) gives a lift of that complex, as we now explain.
The technique we give here also gives a sparse lift of the codes of \cite{LD,BE}; see section IVC of \cite{BE} which shows that the codes of \cite{LD} are fiber bundle codes with circle as fiber.

Recalling the definition in that paper briefly, the construction is a $2$-complex $\cE$.
The complex $\cE$ is a twisted product of two $1$-complexes $\cB,\cF$ called the base and the fiber.
The twisted boundary operator is defined a follows.
Acting on cell in $\cE$ which is a product of a $0$-cell $b^0$ in $\cB$ times an arbitrary cell $f$ in $\cF$ the boundary is given by: 
\be                                                   
\partial^\cE_{(0,q)} (b^0 \otimes f) = b^0 \otimes \partial^\cF f,
\ee
where the subscript $(0,q)$ indicates that the cell is a product of a $0$-cell in $\cB$ with a $q$-cell in $\cF$ for some $q$.
On the other hand, on a cell which is a product of a $1$-cell $b^1$ in $\cB$ with some $f$, the boundary is
\be
 \partial^\cE_{(1,q)} (b^1 \otimes f) =                                                                              
  b^1 \otimes \partial^\cF f \,\,                                                                                             
  +          \sum_{a^0 \in \partial b^1} a^0 \otimes \varphi(b^1,a^0) f,                                                             \label{bdef}
 \ee
 where $\varphi(b^1,a^0)$ is some automorphism of the fiber which acts as a permutation of the cells of the fiber and the sum
 is over $0$-cells $a^0$ in the boundary of $b^1$.
The specific code considered in that paper depends on specific choices of fiber, twists, and base, but this definition is general.
Over $\mZ$, we can replace Eq.~(\ref{bdef}) with
\be
 \tilde \partial^\cE_{(1,q)} (b^1 \otimes f) =     
b^1 \otimes \tilde \partial^\cF f \,\,-
\sum_{a^0 \in \partial b^1} a^0 \otimes \varphi(b^1,a^0) f,                                                             \label{bdefZ}
 \ee
 to get a sparse lift of the chain complex, where $\tilde \partial^\cF$ denotes a lift of the boundary operator $\partial^\cF$.  Any lift $\tilde \partial^\cF$ can be used as input to Eq.~(\ref{bdefZ}); in particular, the naive lift of $\partial^\cF$ suffices.  However, later when considering homology of the lifted complex, we explain a more natural choice of lift $\tilde \partial^\cF$.

 \subsection{Sufficient Conditions for a Sparse Lift}
 Let us now give a sufficient condition to have a sparse lift of a sparse $2$-complex, i.e., a three term complex.
 We fix some sparse lift of $\partial_1$ to $\tilde \partial_1$; for example, we may take the naive lift.
 Then, we will consider whether there is a sparse lift of $\partial_2$ so that the lifted boundary operators are admissible.

Label $2$-cells by an index $z$.  Label $1$-cells by index $q$ and label $0$-cells by index $x$.
For each $2$-cell $z$, regard $\partial\{z\}$ as a set: the set of $1$-cells in the boundary of $z$.  The brackets around $z$ are notation intended to indicate that this is a set.  Let $\partial \partial \{z\}$ be the set of $0$-cells which are in the boundary of some $1$-cell in the boundary of $z$.  Let $A_z$ be a matrix over $\mZ$, whose rows correspond to $0$-cells in 
 $\partial \partial \{z\}$ and whose columns correspond to $1$-cells in $\partial \{z\}$, and whose entries are the matrix elements of $\tilde \partial_1$ between the corresponding cells.
 Since $\cA$ is sparse, $A_z$ is an $O(1)$-by-$O(1)$ matrix.
 
 Let $\vec 1_z$ denote the all $1$s vector on the domain of $A_z$.  By assumption that $\cA$ is a chain complex, $A_z \vec 1_z$ equals $0$ mod $2$.
 If, for all $z$, we had $A_z \vec 1_z=0$ identically, then the naive lift of $\partial_2$ would give an admissible lift of the boundary operators.
 
 Suppose instead for some $z$ that $A_z \vec 1_z \neq 0$.  We now consider this case.
 
Say that a matrix $A$ over $\mZ$ ``does not have $2$-torsion" if, taking the domain to the vectors of integers, for any vector $v$ in the range of $A$ such that $v$ is divisible by $2$, the vector $v/2$ is also in the range of $A$.
 
 We claim that:
 \begin{lemma}
 If $\cA$ is sparse, and all $A_z$ do not have $2$-torsion for the given $\tilde \partial_1$, then there is an admissible sparse $\tilde \partial_2$.
 \begin{proof}
 For each $z$, we will construct some vector $v_z$ in the domain of $A_z$ which is equal to $\vec 1_z$ mod $2$, and such that $A_z v_z=0$, and such that $|v_z|_1$ is $O(1)$.
 Then, given these vectors, we define the lift $\tilde \partial_2$ as follows: for each $2$-cell $z$, the image of that $2$-cell under $\tilde \partial_2$ is the sum of $1$-cells in $\partial\{z\}$ with coefficients given by $v_z$.
 
Ignoring the requirement that $|v_z|_1$ be $O(1)$, the existence of such a vector follows by assumption: the vector $A_z \vec 1_z$ is divisible by $2$, so there is some vector $w$ such that $A_z w = (A_z \vec 1_z)/2$.  Then set $v_z=\vec 1_z-2w$.

To bound $|v_z|_1$, we give a simple non-explicit argument.  The matrix $A_z$ is an $O(1)$-by-$O(1)$ matrix with entries that are $O(1)$.  So, there are only $O(1)$ such matrices, i.e., we have some fixed, finite set of matrices, and for all $n$, all matrices $A_z$ are chosen from this set.  For each matrix in the set, there is some $w$ with $A_z w = (A_z \vec 1_z)/2$ that minimizes $|w|_1$.  Since the set is finite, the maximum of $|w|_1$ over $A_z$ in this set is some finite number, independent of $n$.

A more explicit way to bound the minimal $|w|_1$ is to bring the matrix $A_z$ to a Hermite normal form by Gaussian elimination, using the absence of $2$-torsion to show that none of the pivots are divisible by $2$, and bounding the matrix elements that appear in this process.  We do not consider this further.
 \end{proof}
 \end{lemma}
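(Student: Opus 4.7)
The plan is to construct $\tilde\partial_2$ column-by-column. For each $2$-cell $z$, the $z$-th column should be a vector $v_z$, supported on the $1$-cells in $\partial\{z\}$ and regarded as an element of the domain of $A_z$, satisfying (i) $v_z \equiv \vec 1_z \pmod 2$, so that reducing $\tilde\partial_2$ mod $2$ recovers $\partial_2$; (ii) $A_z v_z = 0$ over $\Z$, so that $\tilde\partial_1\tilde\partial_2=0$; and (iii) $|v_z|_1 = O(1)$ with a constant independent of $z$ and of the size of the complex. Condition (iii) gives column-sparsity of $\tilde\partial_2$ immediately, and row-sparsity follows because each $1$-cell $q$ lies in only $O(1)$ of the sets $\partial\{z\}$ (by sparsity of $\partial_2$ over $\F_2$), each contributing an entry of size $O(1)$.

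To produce $v_z$ I would write $v_z = \vec 1_z - 2w_z$, which collapses (i) and (ii) into the single equation $A_z w_z = \tfrac{1}{2}(A_z \vec 1_z)$. The right-hand side is an integer vector because $\partial_1\partial_2 = 0$ over $\F_2$ forces every coordinate of $A_z \vec 1_z$ to be even. Since $A_z \vec 1_z$ lies in the image of $A_z$ and is divisible by $2$, the no-$2$-torsion hypothesis hands us an integer preimage $w_z$, settling existence.

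For the uniform bound (iii) the key observation is that sparsity of $\cA$ together with the fixed sparse $\tilde\partial_1$ forces each $A_z$ to be an $O(1)\times O(1)$ integer matrix with entries of size $O(1)$, so the matrices $A_z$ range over a finite set $\mathcal{M}$ that does not depend on $n$. For each $A\in\mathcal{M}$ I pick once and for all a minimum-$\ell_1$ solution $w(A)$ of $Aw(A)=\tfrac{1}{2}(A\vec 1)$; the supremum $\sup_{A\in\mathcal{M}}|w(A)|_1$ is then a universal constant bounding every $|v_z|_1$. A more constructive bound can be extracted by reducing $A_z$ to Hermite normal form and noting that the no-$2$-torsion hypothesis prevents any pivot from being even, so back-substitution yields an integral $w$ of controlled size. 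The delicate step is the appeal to no-$2$-torsion in the previous paragraph: this is precisely what lets us promote the $\F_2$-relation among the columns of $A_z$ to an integral relation supported on $\partial\{z\}$, and without it one would be forced to enlarge the support of $v_z$ beyond $\partial\{z\}$ and would risk destroying sparsity. Everything past this step is bookkeeping.
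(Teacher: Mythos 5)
Your proof is correct and follows the paper's argument essentially verbatim: the same substitution $v_z = \vec 1_z - 2w_z$, the same use of no-$2$-torsion to solve $A_z w_z = \tfrac12 A_z\vec 1_z$, and the same finiteness/pigeonhole argument to obtain the uniform $\ell_1$ bound (with the Hermite-normal-form alternative also noted in passing). The only addition is a brief explicit remark on why row-sparsity of $\tilde\partial_2$ follows, which the paper leaves implicit.
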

 
 This condition of no $2$-torsion may not always hold.  While there may be other ways to do the lift, for example by choosing different lifts of $\partial_1$, we make the following conjecture:
 \begin{conjecture}
There exist sparse chain complexes with $\F_2$ coefficients that do not have any sparse lift to integer coefficients.
Further, there exist sparse $2$-complexes with $\F_2$ coefficients that do not have any sparse lift to integer coefficients.
\end{conjecture}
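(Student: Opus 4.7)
The plan is to exhibit an explicit or random family of sparse $\F_2$ chain complexes $\cA^{(n)}$ such that every integer lift fails to be sparse. The natural starting point is to pin down the local obstruction identified earlier: for any choice of sparse lift $\tilde\partial_1$, an admissible sparse lift of $\partial_2$ exists if and only if, for every $2$-cell $z$, there is an integer vector $v_z$ supported on $\partial\{z\}$ with $v_z \equiv \vec 1_z \pmod 2$, $A_z v_z = 0$, and $|v_z|_1 = O(1)$, where $A_z$ is the $O(1)$-by-$O(1)$ submatrix of $\tilde\partial_1$ determined by $z$. Thus the task reduces to finding a family of $\F_2$ complexes such that, no matter how $\tilde\partial_1$ is chosen sparsely, at least some $z$ admits no such $v_z$.

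For the construction itself I would try two approaches in parallel. The first is a random sparse CSS-type complex with parameters analogous to an LDPC code. For a random sparse $\tilde\partial_1$, the minimum $|v_z|_1$ over admissible $v_z$ is a local functional of a small submatrix, and one expects by direct calculation that with constant positive probability the relevant $A_z$ has the kind of $2$-torsion that forces $|v_z|_1$ to exceed any prescribed bound. A first-moment argument over $z$, followed by a union bound over all $\exp(O(n))$ sparse lifts $\tilde\partial_1$, could then yield a positive-probability existence proof, provided the number of problematic $z$ is large enough with high probability. The second approach is algebraic: construct $\cA$ from a non-liftable extension of groups or from the cohomology of a specific finite group, so that the obstruction to a sparse integer lift becomes a fixed non-vanishing class in some $\mathrm{Ext}$ or Bockstein-type group and a brute counting argument over sparse lifts is unnecessary.

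The main obstacle, as the authors flag, is the enormous freedom in the choice of $\tilde\partial_1$. Even restricting $\tilde\partial_1$ to have entries in $\{-1,0,1\}$ with the same support as $\partial_1$, there are $2^{\Theta(n)}$ sign patterns, and any proof must rule out all of them simultaneously. The local $2$-torsion condition depends on those signs, so a naive fixed-$\tilde\partial_1$ argument will not suffice; the first-moment calculation would need a probability-of-failure bound stronger than the number of sign patterns, which for generic random sparse complexes seems just out of reach. A successful attack will likely require either an invariant of the $\F_2$ complex that is preserved under every sparse integer lift yet is itself incompatible with sparsity over $\mZ$, or a strong pseudo-randomness/expansion property of $\cA$ that defeats the adversarial sign choice in a single union bound. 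My expectation is that this step is precisely why the statement remains a conjecture, and that genuine progress will require identifying a new invariant rather than a purely combinatorial count.
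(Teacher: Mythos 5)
This statement is posed in the paper as a \emph{conjecture}, with no proof given; the paper itself leaves it open. Your proposal is therefore not in competition with any argument of the authors, and indeed it is not a proof at all---it is a research program, which you acknowledge honestly at the end (``My expectation is that this step is precisely why the statement remains a conjecture\ldots''). So the right assessment is that you have correctly identified the problem's status and structure, but have not advanced beyond where the paper leaves it.

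On the substance: you correctly extract the local obstruction from the paper's preceding lemma---the existence of bounded-$\ell_1$ vectors $v_z$ with $v_z\equiv \vec 1_z\pmod 2$ and $A_z v_z=0$---and you correctly flag the central difficulty, namely that the conjecture quantifies over \emph{all} sparse lifts $\tilde\partial_1$, of which there are exponentially many, so a fixed-$\tilde\partial_1$ non-existence argument cannot suffice. Your two proposed avenues (a first-moment/union-bound over random sparse complexes, and a lift-invariant of Bockstein/$\mathrm{Ext}$ type) are plausible directions, but as stated neither closes the gap: the union bound you sketch needs a per-lift failure probability that beats $2^{\Theta(n)}$, which you concede seems out of reach, and the algebraic approach is only gestured at without a candidate invariant. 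If you want to push this, the most promising line is your second one: try to identify a quantity computed from $\partial_1,\partial_2$ over $\F_2$ that is (a) bounded for $O(1)$-sparse complexes, (b) provably unbounded for any $O(1)$-sparse admissible integer lift, and (c) nonzero on a concrete family. Absent such an invariant, this remains, as the paper says, a conjecture.
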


Even more strongly, we may consider whether there is a sparse ``stable lift".  Here a stable lift means that starting with the original chain complex $\cA$, we stabilize by adding a finite number of additional pairs of $j$- and $j+1$-cells, for various $j$.  Each pair added is a pair of cells $b_j,b_{j+1}$, so that $\partial b_{j+1}=b_j$, $\partial b_j=0$, $\partial^T b_{j+1}=0$.  We then lift that stabilized complex.  We conjecture
\begin{conjecture}
There exist sparse $2$-complexes with $\F_2$ coefficients that do not have any sparse stable lift to integer coefficients.
\end{conjecture}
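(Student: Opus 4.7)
The plan is to exhibit a family of sparse $2$-complexes for which a cohomological obstruction to lifting is both quantitatively large and stable under sparse stabilization. Fix any sparse lift $\tilde\partial_1$ of $\partial_1$. Any attempted lift $\tilde\partial_2$ of $\partial_2$ has $\tilde\partial_1\tilde\partial_2$ divisible by $2$, so $(\tilde\partial_1\tilde\partial_2)/2$ reduced mod $2$ is an $\F_2$-valued cochain in an auxiliary complex. Different sparse choices of $\tilde\partial_2$ shift this cochain by $\tilde\partial_1 V \bmod 2$ for sparse integer matrices $V$, so sparse liftability of the pair $(\partial_1,\partial_2)$ is equivalent to killing a specific $\F_2$ cohomology class by the image of a \emph{sparse} integer $V$. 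The plan is to lower-bound the required $\ell_1$ weight of any such $V$ using coboundary expansion, and then argue that the freedom afforded by stable stabilization cannot defeat that bound.

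First I would define an invariant $\sigma(\cA)$: the minimum, over all stabilizations $\cA'$ of $\cA$ by trivial pairs and all integer matrices $V$ in $\cA'$ whose coboundary represents the obstruction class, of the maximum column $\ell_1$-norm of $V$. Sparse stable liftability of $\cA$ is then exactly $\sigma(\cA) = O(1)$. The key lemma I would aim for is that $\sigma(\cA)$ is bounded below by a cochain systole of $\cA$ relative to the obstruction class. The intuition is that adding a trivial pair $b_j, b_{j+1}$ with $\partial b_{j+1} = b_j$ is a chain-homotopy-trivial inclusion, so any sparse trivialization in the extension should push down to one in $\cA$ at controlled cost.

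For candidate complexes I would turn to known coboundary-expander constructions, for instance the $\F_2$ complexes underlying the quantum LDPC codes discussed earlier in the paper, or complexes from Ramanujan-type constructions. Coboundary expansion gives exactly the lower bound needed: if $|\tilde\partial_1 V|_1 \geq c\,|V|_1$ for all $V$ orthogonal to the kernel, and the obstruction cocycle has $\ell_1$ weight $\Omega(n)$, then $|V|_1 = \Omega(n/c)$, producing an unbounded $\sigma(\cA)$. Random sparse complexes are generically expected to have an obstruction cocycle of such weight, so this family is plausibly non-empty; the main verification is that the specific coboundary expansion constants for the chosen construction are strong enough at the relevant degree.

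The main obstacle, and the reason this appears as a conjecture rather than a theorem, is the robustness claim under stabilization. A stable lift can introduce many short auxiliary pairs, potentially factoring $V$ through long telescoping chains and thereby realizing a globally dense trivialization via locally sparse pieces. Proving that no such factorization helps will require a careful chain-homotopy calculus: concretely, I would try to build, for each stabilization, an explicit chain retraction that projects a sparse trivialization in the extension back to a trivialization in $\cA$ with only $O(1)$ multiplicative loss in column $\ell_1$-norm. If this retraction step goes through in the presence of coboundary expansion, the conjecture follows, but I expect this step to be where the argument is most fragile, since the retraction must itself respect sparsity uniformly in the amount of stabilization used.
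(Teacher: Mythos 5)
This statement is left as an open conjecture in the paper; no proof, or even a proof sketch, is offered, so there is nothing to compare your plan against. Your outline correctly identifies a Bockstein-type obstruction mod~$2$ and correctly flags stabilization-robustness as the principal open obstacle, but there are concrete errors in the unconditional part of your plan that would need repair before the stabilization step is even reached.

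First, your expansion inequality is pointed the wrong way. You want a lower bound on the $\ell_1$ norm of any $V$ trivializing the obstruction cochain $O = (\tilde\partial_1\tilde\partial_2)/2 \bmod 2$, but a coboundary expansion hypothesis $|\partial_1 V|_1 \geq c\,|V|_1$ for $V$ transverse to the kernel yields $|V|_1 \leq |O|_1/c$ --- an \emph{upper} bound. Expansion certifies that a small obstruction admits a small trivialization; it works against the nonexistence you want to prove, not for it.

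Second, and more structurally, the obstruction cochain $O$ is itself automatically sparse: since $\tilde\partial_1$ and $\tilde\partial_2$ each have $O(1)$ $\ell_1$-weight per column, every column of $\tilde\partial_1\tilde\partial_2$, and hence of $O$, has $O(1)$ weight. Your invariant $\sigma(\cA)$ is the maximum column $\ell_1$-norm of a trivializing $V$, and the only inequality available from sparsity of $\partial_1$, namely $|v_j|_1 \geq |O_{\cdot j}|_1/s$, is vacuous when $|O_{\cdot j}|_1 = O(1)$. The quantitative input you need is not the weight of the cochain $O$ but a genuinely algebraic obstruction; the natural candidate, which the paper isolates in its sufficiency lemma, is the $2$-torsion of the local matrices $A_z$, which blocks the existence of \emph{any} integer trivialization column-by-column rather than merely of a sparse one. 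A further, more minor, issue is that killing the mod-$4$ obstruction by a sparse $V$ is only the first stage of a tower of obstructions mod $2^k$, so $\sigma(\cA) = O(1)$ is necessary but not equivalent to sparse liftability as your proposal asserts. Finally, your retraction idea for handling stabilization is exactly the step you correctly identify as fragile, and the paper gives no indication how to carry it out; the conjecture remains open.
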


\subsection{Lifting Without Torsion}
Even if a sparse lift of some chain complex exists, the lift may introduce torsion in homology.  This phenomenon can be seen already at the level of a $1$-complex, which has only two types of cells, $0$-cells and $1$-cells.
Such a complex is defined by a single matrix $\partial$ over $\F_2$.

Since we are considering a $1$-complex, any lift $\tilde \partial$ is admissible: there is only one boundary operator so the condition $\partial^2=0$ is vacuous.
However, the question arises whether the lift can avoid having torsion.  If $\partial$ is full rank, the lift has no torsion in homology iff ${\rm det}(\tilde \partial)=1$.  Of course, ${\rm det}(\tilde \partial)=1$ mod $2$, so the lift cannot introduce $2$-torsion but may introduce $p$-torsion for $p>2$.

We can avoid introducing torsion if $\partial$ has a sparse LU decomposition, $\partial=PLUQ$ for some permutation matrices $P,Q$ and sparse lower and upper triangular $L,U$.  Then, the diagonal entries of $L,U$ are equal to $1$, and the naive lift of those matrices gives a matrix with determinant $1$: $\tilde \partial=\tilde P \tilde L \tilde U \tilde Q$.
If $\tilde L,\tilde U$ are sparse, then $\tilde \partial$ is sparse.
However, it is not clear if such a sparse LU decomposition exists.

We conjecture:
\begin{conjecture}
There exist sparse matrices with no sparse LU decomposition.  Further, with high probability, a sparse matrix chosen uniformly at random has this property.
\end{conjecture}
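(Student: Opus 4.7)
The plan is to combine random graph expansion with classical fill-in lower bounds for sparse Gaussian elimination. First I would make the connection to graph theory explicit: to any $n \times n$ matrix $A$ over $\F_2$ associate its bipartite graph $G(A)$, with rows and columns as the two vertex classes and an edge at each nonzero entry. A decomposition $A = PLUQ$ with $L,U$ of sparsity $s$ is, up to cancellation in $\F_2$, the choice of an elimination ordering (encoded by $P,Q$) for which symbolic Gaussian elimination has fill-in bounded by $O(sn)$: the subdiagonal entries of $L$ and the superdiagonal entries of $U$ record precisely the row and column operations performed during the elimination, and their sparsity controls how many new nonzeros can appear at each step.

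Next, I would take as the random model an $n \times n$ matrix whose support is a random $d$-regular bipartite graph for a small constant $d$ (say $d=3$), with constant value $1$ on the support (or, for the stronger claim, with $\F_2$ values chosen independently on the support). It is standard that such bipartite graphs are, with high probability, strong vertex expanders for subsets of size up to $cn$, with expansion ratio strictly above $1$. Moreover, expansion is preserved under random deletion of a constant fraction of vertices, which is what one needs to control all elimination orderings simultaneously.

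Third, I would translate expansion into a lower bound on fill-in valid uniformly over the choice of permutations $P,Q$. The central claim is that for any elimination ordering, after processing the first $(1-\delta) n$ pivots the remaining $\delta n \times \delta n$ Schur complement graph must, by applying the expansion property to its ancestor in $G(A)$, contain a subgraph of average degree $\Omega(n)$. This forces one of $L$ or $U$ to have a row or column with $\Omega(n)$ nonzeros, contradicting any $o(n)$ sparsity hypothesis. A union bound over the $(n!)^2$ choices of $(P,Q)$ is absorbed if the expansion guarantee fails with doubly-exponentially small probability, which one arranges by taking $d$ sufficiently large (at the mild cost of working with a slightly less sparse model, still bounded degree).

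The main obstacle is the last step, specifically handling that $\F_2$-arithmetic may allow spurious cancellations that reduce the numerical fill below the symbolic fill. Classical fill-in lower bounds (nested dissection, treewidth-based bounds) are typically stated over generic fields where nonsingularity of relevant submatrices is automatic; over $\F_2$ each coincidence cancels with probability $1/2$, so one must either randomize over entry values on the expander support and apply a Schwartz--Zippel style argument to show at most a constant fraction of the symbolic fill can cancel, or find an $\F_2$-rank invariant supported on expander subgraphs that survives cancellation. The second sentence of the conjecture---high-probability failure for the uniform sparse model---follows immediately from the same proof, since the uniform $d$-sparse and the $d$-regular distributions share the relevant expansion properties with high probability.
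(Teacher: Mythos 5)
The statement you are attempting to prove is left open in the paper: it appears there precisely as a \emph{conjecture}, with no proof supplied, so there is no internal argument to compare against. Your outline follows the natural first strategy (random sparse matrix $\Rightarrow$ bipartite expander $\Rightarrow$ large fill-in for every elimination ordering), and the plan is reasonable as a plan. But the obstacle you flag at the end is not a technicality to be patched after the fact; it \emph{is} the problem, and your proposed remedies do not close it. Schwartz--Zippel gives failure probability at most $(\deg)/\lvert S\rvert$, which is vacuous over $\F_2$ where $\lvert S\rvert = 2$: each candidate fill entry, regarded as a polynomial in the randomized entries, may vanish with probability as large as $1/2$, and the entries of a Schur complement are heavily correlated, so you cannot simply argue that ``at most a constant fraction of the symbolic fill cancels.'' Consequently you cannot obtain the doubly-exponentially small per-ordering failure probability you invoke to union-bound over the $(n!)^2$ choices of $(P,Q)$. (Note separately that the union bound is not needed for the expansion property, which is a single event about a fixed graph; it is needed only for the cancellation argument, and that is exactly where the probability bound is unavailable.)

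There is a second, independent gap: the ``central claim'' in step three --- that for a bipartite expander, every pair of row/column permutations leaves, after $(1-\delta)n$ pivots, a Schur complement whose support has average degree $\Omega(n)$ --- is asserted without proof. The classical fill-in lower bounds you gesture at (nested dissection, treewidth) are formulated for symmetric elimination on a single graph and over generic fields; a nonsymmetric version with two independent permutations on a bipartite support, sharp enough to force $\Omega(n)$ nonzeros in a row of $L$ or column of $U$, would itself need to be established, and establishing it symbolically still leaves the $\F_2$ cancellation issue untouched. What the conjecture really calls for is an $\F_2$-intrinsic obstruction: an invariant of the support pattern alone that lower-bounds the total number of nonzeros in any $L,U$ over $\F_2$ with $A = PLUQ$, or else a deterministic explicit construction over $\F_2$ with no sparse LU. Until one of those pieces exists, the outline is a plausible programme rather than a proof, which is presumably why the paper records the statement as a conjecture rather than a lemma.
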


\begin{conjecture}
There exist sparse chain complexes over $\F_2$ that have a sparse lift to integer coefficients, but every sparse lift
to integer coefficients has some torsion in its homology and cohomology.  Further, there exist sparse $1$-complexes with trivial homology and cohomology, but every sparse lift to integer coefficients has some torsion in its homology and cohomology. 
\end{conjecture}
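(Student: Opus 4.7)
The plan is to focus on the stronger second statement, concerning $1$-complexes, since producing a $1$-complex witness immediately implies the first part by taking a direct sum with a trivial $2$-complex. A $1$-complex is specified by a single boundary $\partial:\cA_1\to\cA_0$, and trivial mod $2$ homology and cohomology force $\partial$ to be a square matrix with odd determinant. Every integer lift $\tilde\partial$ is automatically admissible (there is no $\tilde\partial^2=0$ relation to respect), and the lifted complex has no torsion in (co)homology precisely when $\tilde\partial\in GL_n(\mZ)$, that is, $\lvert\det(\tilde\partial)\rvert=1$. So the task becomes: exhibit a family of $s$-sparse square $\F_2$ matrices with odd determinant such that no $O(1)$-sparse integer lift is unimodular.

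My approach would be a probabilistic construction. Take $\partial$ to be the biadjacency matrix of a random $d$-regular bipartite graph $G$ on $n+n$ vertices, restricted to graphs whose $\F_2$ determinant is odd (which should happen with positive probability for suitable $d\ge 3$). Under the sparsity assumption $\lvert\cdot\rvert_1\le s$ per row and column with $s$ equal to $d$, every sparse lift $\tilde\partial$ is a sign assignment on the edges of $G$, and $\det(\tilde\partial)=\sum_{M}\varepsilon_M$ is a signed sum over the perfect matchings $M$ of $G$, with $\varepsilon_M\in\{\pm 1\}$ determined jointly by the sign assignment and $\mathrm{sgn}(M)$. The claim to prove is that for typical $G$, \emph{every} sign assignment yields $\lvert\det\rvert>1$.

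The main obstacle is that a naive union bound over the $2^{dn}$ sign assignments, combined with a pointwise Erd\H{o}s--Littlewood--Offord type anti-concentration estimate, fails: for a $d$-regular expander $G$ the number of perfect matchings is $P(G)=\Theta((d-1)^n)$ by van der Waerden, so the typical magnitude of $\det$ is $\sqrt{P}\sim (d-1)^{n/2}$ and any single sign assignment hits $\pm 1$ with probability at most $1/\sqrt{P}\sim (d-1)^{-n/2}$, which is far too weak to survive a $2^{dn}$ union bound. To close this gap I would exploit correlation structure: view $\det(\tilde\partial)$ as a multilinear polynomial of degree $n$ in the edge signs, and bound the number of sign assignments on which it equals $\pm 1$ using either polynomial method / Schwartz--Zippel estimates over $\mZ$ or, more promisingly, a $p$-adic obstruction. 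For the latter, I would try to pick $\partial$ (perhaps derived from an explicit expander) so that for some fixed small odd prime $p$, the matrix $\partial$ lifted mod $p$ is structurally forced to have a kernel vector (independent of the sparse sign choice), forcing $p\mid\det(\tilde\partial)$ uniformly over all sparse lifts.

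For the first (weaker) half of the conjecture, which allows chain complexes of any length, a natural strategy once the $1$-complex case is handled is to embed such an obstruction into a sparse $2$-complex that is already known to admit a sparse admissible lift, such as the fiber bundle code construction of Eq.~(\ref{bdefZ}). Forming a direct sum, or coupling the $1$-complex obstruction into the fiber via the twist $\varphi(b^1,a^0)$, produces a sparse $2$-complex that possesses at least one sparse admissible lift (inherited from the fiber bundle construction) while its $H_0$ or $H_1$ inherits the torsion from the obstructing $1$-complex regardless of the lift chosen. Verifying that no alternative sparse admissible lift erases the torsion, and not merely moves it between degrees, is the residual technical point, but the admissibility constraint $\tilde\partial_1\tilde\partial_2=0$ only further restricts the allowed sign patterns and therefore should only help.
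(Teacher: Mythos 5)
This statement is an open \emph{conjecture} in the paper; there is no proof to compare against. The paper's surrounding discussion (the ``Lifting Without Torsion'' subsection) does, however, set up exactly the framing you use: for a $1$-complex with trivial $\F_2$ (co)homology, $\partial$ is square and odd-determinant, every lift is automatically admissible, and torsion-freeness of the lift is equivalent to $\lvert\det\tilde\partial\rvert=1$. The paper then gestures toward sparse LU decompositions as a sufficient condition for avoiding torsion and conjectures that such decompositions can fail, but proves nothing. So your first few sentences correctly reconstruct the paper's reduction, and your observation that a $1$-complex witness immediately gives the first (weaker) half of the conjecture is right --- indeed your later detour through the fiber-bundle code is unnecessary, since a $1$-complex \emph{is already} a sparse chain complex with a sparse (naive) lift, hence a direct witness; no embedding or direct sum is required.

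There is a genuine gap in your reduction to sign assignments. The paper's definition of an $s$-sparse lift bounds the $\ell_1$ norm of each row and column by $s$ with $s=O(1)$, and $s$ is not forced to equal the degree $d$ of the bipartite graph. With $s>d$, a sparse lift may replace a $0$ entry by $\pm 2, \pm 4,\ldots$ anywhere in the row or a $1$ entry by $\pm 3,\ldots$, so the space of sparse lifts is much larger than $\{\pm1\}^{E(G)}$ --- roughly $n^{O(n)}$ matrices rather than $2^{dn}$. Your determinant formula $\det(\tilde\partial)=\sum_M\varepsilon_M$ over perfect matchings, and the attendant anti-concentration heuristic $\lvert\det\rvert\sim\sqrt{P(G)}$, only applies to the $\pm 1$ sublattice of lifts; you have not addressed the general sparse lift. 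Relatedly, the proposed $p$-adic obstruction --- forcing $p\mid\det(\tilde\partial)$ uniformly over sparse lifts by a structural kernel mod $p$ --- seems hard to salvage precisely because the lift modulo an odd prime $p$ is wildly underdetermined: the $0$ entries of $\partial$ can be lifted to any small even integer, which modulo $p$ can realize many residues, so no fixed mod-$p$ kernel vector can be pinned down by $\partial$ alone. You would need an argument that is robust to both sign changes \emph{and} these bounded-magnitude perturbations, and the proposal does not supply one.

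As a research-program sketch the proposal is sensible and the identified obstacles (failure of the naive union bound, the need for correlation structure) are the right ones; but none of the alternative fixes is carried far enough to close the gap, so this is not a proof, and the conjecture remains, as in the paper, open.
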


\subsection{Lifting Without Torsion for the Bundle Code: Conjectured Proof Sketch}
We have shown above that the fiber bundle code admits a sparse lift to integer coefficients.
Here we briefly sketch a proposed plan of proving that there is a sparse to integer coefficients such that the lifted complex
has no torsion in its homology and cohomology and has the same Betti numbers as does the complex over $\F_2$.

We then further discuss questions of killing unwanted homology (i.e., modifying the code so that it only has $1$ logical qubit), as an attempt to prove conjecture \ref{goodlift}.

The lift that we propose is to use Eq.~(\ref{bdefZ}), choosing however a particular lift for $\tilde \partial^\cF$.  Rather than choosing the naive lift, we choose a lift with signs: all entries are $\pm 1$.  The particular lift chosen is the ``obvious" one: the fiber is a cellulation of the circle, and so we lift it to a cellulation of the circle with integer coefficients.  Each $1$-cell has two $0$-cells in its boundary, and in the lift the boundary of a $1$-cell is the sum of those $0$-cells with opposite coefficients.

The base is a $1$-complex.  In general, we do not know if every $1$-complex has a sparse lift without introducing torsion.  However, in this case we believe that a sparse lift of the base exists, and that the naive lift suffices.  Before explaining this let us clarify which base we mean: that paper\cite{HHO} considered primarily a base that was polylogarithmically sparse, and then explained a weight reduction procedure for the base.  Here we will just explain the case of a polylogarithmically sparse base; it seems that if we can (polylogarithmically) sparsely lift the base without torsion, then we can also O(1)-sparsely lift the weight reduced base without torsion, by applying a similar weight reduction procedure to the lift.

The base then is defined by a boundary operator which is a $(3/4)n$-by-$n$ matrix for some $n$.  That is, there are many more $1$-cells than $0$-cells.  With high probability, this matrix is full rank.  
So, for some matrix of size $(3/4)n$-by-$(3/4)n$, the matrix of full rank.  Indeed, there are, with high probability, many such matrices.  If any one of those matrices was unimodular, the lift would have no torsion.  More strongly, if we consider all possible 
$(3/4)n$-by-$(3/4)n$ submatrices which are full rank, if the gcd of the determinants of these submatrices is $1$ then
there is no torsion in the lift.  We conjecture that, given that there are a large number of possible such submatrices, this gcd will be $1$ with high probability.

If this holds, then the base code lifts without torsion with high probability.  Then, by general homological arguments, the fiber bundle code constructed in that paper will also lift without torsion.

Even if this works up to here, this is still not sufficient.  The resulting manifold would have
$H_i(M;\Z) \cong \Z$, $i = 0,11$, and $H_i(M;\Z) \cong 0$, $i = 1,2,3,5,6,8,9,$ and 10, but it would have
$H_4(M;\Z),H_7(M,\Z)\cong Z^{n/4}$.  It is necessary then to kill the unwanted homology.  It is unclear whether or not this can be done without hurting the distance or number of qubits of the code.  In the language of manifolds considered in this paper, we can add some large (i.e., size growing with $n$) $5$-cells whose boundary kills the unwanted homology, but since the size of the cell is growing with $n$, it may hurt the distance or number of qubits of the code.

It may alternatively be possible to kill the unwanted homology by modifying the base code.  For example, one can add additional checks which constrain that certain bits are equal to zero (adding an additional $0$-cell which is in the boundary of exactly one $1$-cell).  This can be done to remove homology.  However, doing this may unfortunately re-introduce torsion (essentialy, the gcd argument above no longer will apply), and also it may impact the distance of the quantum code since the change in the base code changes both the number of qubits and the checks of the quantum code.

We leave these coding questions for future work.  These are somewhat surprising questions from the point of view of coding theory.  For one thing, here we are struggling to {\it reduce} the number of logical qubits of the code, to realize systolic freedom on a single smooth manifold $S^4 \times S^7$.  In general, in coding theory, one would of course like to have the number of logical qubits as large as possible.

\subsection{Lifting Hypergraph Product Codes}
A further application is to hypergraph product codes\cite{tillich2013quantum}.  These codes have linear rate (i.e., rank of middle dimensional homology of the $\Z_2$ chain complex associated with the code is proportional to the number of qubits) and distances $d_X,d_Z$ both proportional to the square-root of the number of qubits.

These codes have a sparse lift as follows.  The chain complex $\mathcal{C}$ of the code is the homological product of two chain complexes, that we will write $\cA,\cB$.  Both $\cA,\cB$ are $1$-complexes, with cells corresponding to vertices and hyperedges of a hypergraph.  Since they are $1$-complexes, the naive lifts give sparse lifts of $\cA,\cB$.  Then, the product of these naive lifts gives a sparse lift of $\mathcal{C}$.  Note that the product of the naive lifts is not the naive lift of the product: there are extra minus signs.

This then gives a homological code\footnote{A homological code is a quantum code with qubits identified with cells of a particular dimension and $X$- and $Z$-stabilizers identified with cells of one lower and one higher dimension on some cellulation of a manifold.} with linear rate and square-root distance (up to polylogs if we want a simply connected manifold).  The first example of a homological code with linear rate and with distance growing as a polynomial of the number of qubits used hyperbolic $4$-manifolds\cite{guth2014quantum}, answering a question of Z\'{e}mor\cite{zemor2009cayley} about the existence of such codes;  in that construction the distance grew more slowly than square-root.  Our construction erodes the distinction between a homological code and a CSS code: sparsely liftable CSS codes can be turned into homological codes.

{\it Acknowledgments---} The first named author would like to thank Xin Zhou for discussions on the isoperimetric inequality appearing in Appendix B.   
\newpage
\bibliography{refs}

\appendix
\section{Cycle Bases and Decongestion Lemma}
\label{cyclebasis}
Given a graph $G$, a {\it cycle basis} is a set of simple cycles\footnote{A simple cycle is a closed path on a graph that does not repeat any vertices.  Every simple cycle defines a closed $1$-chain with $\Z_2$ coefficients, taking a coefficient $1$ on all edges in the cycle and $0$ on other edges.  However, not all closed $1$-chains correspond to simple cycles.  Sometimes in graph theory a simple cycle is just called a cycle, but we use the term simple cycle to avoid confusion.} such that the corresponding $1$-chains form a basis for $H_1(G;\Z_2)$.

A {weakly fundamental cycle basis} is a cycle basis which can be linearly ordered so that every cycle contains at least one edge
which does not appear in any later cycle in the basis (some authors reverse this order).

Weakly fundamental cycle bases have the following useful property, that we can trivialize the fundamental group by attaching $2$-cells to each cycle in the basis.  This is stronger than what we might have for an arbitrary cycle basis, where attaching such $2$-cells might only kill first homology without trivializing the fundamental group.
\begin{lemma}
Given a connected graph $G$, regarded as a $1$-complex, any weakly fundamental cycle basis is actually a free basis for $\pi_1(G)$. As a consequence attaching $2$-cells to this basis produces a simply connected result.
\begin{proof}
Let the basis have $k$ cycles.
Order the cycles $C_1,\ldots,C_k$ of the basis so that each $C_i$ contains some edge $e_i$ not in any later cycle.
$G\setminus (e_1 \cup \ldots \cup e_k)$ must be a tree; if not, the $\{C_i\}$ do not even span $H_1(G;\Z_2)$.
Crushing this tree to a point, $G$ becomes a bouquet of circles, the circles bijective with the edges $\{e_i\}$, exhibiting
$\{e_i\}$ as a free basis.
\end{proof}
\end{lemma}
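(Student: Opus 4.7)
The plan is to exhibit $G$ as homotopy equivalent to a wedge of $k$ circles (one per distinguished edge $e_i$), and then verify via a triangular algebraic structure that the classes $[C_i]$ form a free basis of the resulting free $\pi_1(G)$.

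The first step is to show that $T := G \setminus \{e_1, \ldots, e_k\}$ is a spanning tree. From $b_1(G) = k$ we get $|E(T)| = |V(G)| - 1$, so it suffices to see that $T$ is connected. I would add the $e_i$'s back in the order $e_1, e_2, \ldots, e_k$ and observe that the cycle $C_i$ lies entirely in $T \cup \{e_1, \ldots, e_i\}$: by the weakly fundamental property, $C_i$ contains no $e_j$ with $j > i$, and $C_i$ is a simple cycle, so it uses $e_i$ exactly once. Therefore $C_i \setminus \{e_i\}$ is a path in $T \cup \{e_1, \ldots, e_{i-1}\}$ connecting the two endpoints of $e_i$; consequently adding $e_i$ cannot decrease the number of connected components at that step. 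Inductively the component count is preserved along the whole sequence, so $T$ has the same (single) component as $G$.

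With $T$ a spanning tree, collapsing it gives $G \simeq G/T = \bigvee_{i=1}^{k} S^1_i$, and so $\pi_1(G) \cong F_k := F(\alpha_1, \ldots, \alpha_k)$ with $\alpha_i$ the loop class of $e_i$. Reading $[C_i]$ as a word in these generators, the same two facts used above show that $[C_i] \in F(\alpha_1, \ldots, \alpha_i)$ and that $\alpha_i$ appears in $[C_i]$ exactly once, so I may write $[C_i] = u_i\, \alpha_i^{\epsilon_i}\, v_i$ with $u_i, v_i \in F(\alpha_1, \ldots, \alpha_{i-1})$ and $\epsilon_i \in \{\pm 1\}$. From this triangular form, a straightforward induction on $i$ shows $\langle [C_1], \ldots, [C_i] \rangle = \langle \alpha_1, \ldots, \alpha_i \rangle$: the identity $\alpha_i^{\epsilon_i} = u_i^{-1}\, [C_i]\, v_i^{-1}$, combined with the inductive hypothesis that $u_i, v_i \in \langle [C_1], \ldots, [C_{i-1}] \rangle$, puts $\alpha_i$ in the subgroup generated by the $[C_j]$'s with $j \leq i$. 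Taking $i = k$ shows $\{[C_i]\}$ generates $F_k$; since $F_k$ is Hopfian, any $k$-element generating set is automatically a free basis. The consequence for simple connectivity is then immediate: attaching a $2$-cell along each $[C_i]$ normally kills a free basis of $\pi_1(G)$, so by van Kampen the result is simply connected.

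I expect the one delicate point to be the upgrade from ``generates'' to ``free basis.'' The cleanest route is to cite Hopficity of finitely generated free groups; an alternative I could give inline is to reinterpret the substitutions $\alpha_i \leftrightarrow [C_i]$ as a sequence of elementary Nielsen transformations applied to the standard basis $(\alpha_1, \ldots, \alpha_k)$, since by the triangular form $[C_i]$ differs from $\alpha_i^{\pm 1}$ only by conjugation and left/right multiplication by elements of the subgroup generated by the already-transformed generators, and each such move preserves freeness.
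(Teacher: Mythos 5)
Your proof is essentially the same route as the paper's: delete the distinguished edges $\{e_i\}$ to obtain a spanning tree, collapse it to exhibit $G$ as a wedge of circles with $\pi_1$ free on generators $\alpha_i$ corresponding to the $e_i$, and then argue that the $[C_i]$ also form a free basis. You carry out the second half more explicitly than the paper does: the paper's proof stops at exhibiting $\{e_i\}$ as a free basis and leaves the triangular change of basis from $\{\alpha_i\}$ to $\{[C_i]\}$ implicit, while you spell it out (and correctly observe that Hopficity of $F_k$, or equivalently a sequence of Nielsen transformations, upgrades a $k$-element generating set to a free basis). That is a worthwhile addition.

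There is a direction mix-up you should fix. The paper's convention (``each $C_i$ contains an edge $e_i$ not appearing in any \emph{later} cycle'') says $e_j \notin C_m$ for $m > j$; taking the contrapositive, $C_i$ contains no $e_j$ with $j < i$ --- not ``$j > i$'' as you wrote. As written, your tree-reconstruction step (adding $e_1, \ldots, e_k$ in increasing order and asserting that $C_i \setminus \{e_i\}$ already lies in $T \cup \{e_1,\ldots,e_{i-1}\}$), and likewise your claim that $[C_i] \in F(\alpha_1, \ldots, \alpha_i)$, both rest on the opposite inclusion. With the paper's ordering you should instead add the $e_i$ in \emph{decreasing} order, and the triangular form becomes $[C_i] \in F(\alpha_i, \ldots, \alpha_k)$ with the induction run downward. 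Equivalently, relabel $C_i \mapsto C_{k+1-i}$ to pass to the ``not in any \emph{earlier} cycle'' convention (which the paper explicitly notes some authors use); then everything you wrote is literally correct. This is purely a bookkeeping issue and does not affect the validity of the argument once the ordering is made consistent.
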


Our main result is the following {\bf decongestion lemma}.  We do not need the results in the second paragraph of this lemma in the rest of this paper, but we give them for completeness.
This lemma shows the existence of a weakly fundamental cycle basis in which each edge appears only polylogarithmically many times.  Other than MacLane's ``$2$-basis", the property of bounding the number of times an edge appears in a cycle basis does not seem to have been considered much before.  \cite{reich2014cycle} called a basis in which each edge appears at most $p$ times a ``$p$-basis", but we have not found any upper bounds on $p$ in the literature.

\begin{lemma}
\label{dcl}
Given any graph (possibly with multi-edges and self-edges) on $V$ vertices and $E$ edges, with the degree of each vertex bounded by $O(1)$, there is a weakly fundamental cycle basis 
in which each edge appears in at most $O(\log(V)^2)$ cycles in the basis.

Further, there is an efficient randomized algorithm, given in the proof of this lemma, to construct such a basis.  Further, if we assign non-negative weights to each edge of the graph, then with probability $\Omega(1)$ the algorithm returns a cycle basis of total weight bounded by $(\log(V))$ times the sum of edge weights of the graph.

Further, suppose the graph is sparse, and say that two cycles in the basis intersect if they share at least one vertex.  Then, each cycle in the constructed basis intersects at most polylogarithmically many cycles later in the basis.
\begin{proof}
The proof is based on the following algorithm $A$.

The algorithm $A$ is recursive and randomized.  We write $A(G)$ to denote the cycle basis returned given graph $G$ as input.
If $G$ has no edges, then $A(G)=\emptyset$.  Otherwise:
\begin{itemize}
\item[1.] If graph $G$ has at least one vertex with degree $1$, let $v$ be an arbitrary such degree-$1$ vertex and define $G'$ to the be graph with that vertex removed (i.e., the subgraph induced by the set of vertices other than $v$).  Return $A(G')$.

\item[2.] Else, if graph $G$ has at least one vertex $v$ with degree $2$, let $v$ be an arbitrary such degree-$2$ vertex.
We consider two cases.
{\bf Case A:} If $v$ has a self-edge $e$ , let $C$ be the cycle consisting just of that self-edge.  Let $G'$ be the graph obtained by removing edge $e$ from $G$, and return $\{C\} \cup A(G')$. {\bf Case B:}
If $v$ does not have a self-edge, then $v$ has edges to two other vertices $x,y$.  Define $G'$ to be the graph given by removing $v$ from $G$ and adding an edge $(x,y)$.  Compute $A(G')$.  Then, for each cycle in $A(G')$, replace every occurrence of edge $(x,y)$ with $(x,v),(v,y)$ and return this as $A(G)$.

\item[3.] Else, find a simple cycle $C$ of length at most $O(\log(V))$ in $G$.  Let $G'$ be the graph obtained from $G$ by removing an edge of that cycle, choosing that edge uniformly at random.  Return $\{C\} \cup A(G')$.
\end{itemize}

Let us prove first that this algorithm is correct, in that it always returns a weakly fundamental cycle basis for $G$, and let us prove that indeed a cycle of length $O(\log(V))$ does exist in case 3.  Then we use this algorithm to prove the existence of a cycle basis where each edge appears at most $O(\log(V)\log(E))$ times.

The proof that the algorithm returns a cycle basis is inductive: we assume it is correct for all graphs with $V'<V$ vertices and for all graphs on $V$ vertices with $E'<E$ edges, and use that to prove it holds for all graphs with $V$ vertices and $E$ edges.
The base cases $V=0$ or $V>0,E=0$ are trivial.
Under this inductive assumption, it is immediate that the algorithm returns a cycle basis.
Note that if item 1 holds, then $v$ does not participate in any cycle, and so a cycle basis for $G'$ gives a cycle basis for $G$.
The cycle basis the algorithm returns is weakly fundamental since each time in case 2A or 3 that we add a cycle $C$ to the basis, we remove an edge in $C$.

To bound the length of the simple cycle in item 3, note that by assumption all vertices have degree at least $3$.  Hence, starting from any given vertex there are $>2^l$ paths of length $l$ starting at that vertex so for $l\sim \log(V)$ there must be two different paths with the same endpoint, implying a simple cycle of length $O(\log(V))$.

Now we estimate how many times some edge $e$ appears in a cycle basis returned by $A(G)$.
The algorithm is recursive, starting with graph $G$ and defining a sequence of graphs $G',G'',G''',\ldots$.
Let $G_0=G,G_1=G',G_2=G'',\ldots$.
Note that when the algorithm constructs $G_{j+1}$ from $G_j$, the edge set of $G_{j+1}$ is some subset of the edges of $G_j$, possibly union with an extra edge in case {\bf 2B}.
We say that an edge $f$ in $G_{j+1}$ is the {\it child} of an edge $e$ in $G_j$ if $f$ is the same as edge $e$ or if $f$ is obtained in case 2B, where $e=(x,v)$ or $e=(v,y)$ and $f=(x,y)$.
We say that an edge $f$ in $G_j$ is the {\it descendant} of an edge $e$ in $G=G_0$
if there is a sequence $e_0=e,e_1,e_2,\ldots,e_j=f$ with $e_{k+1}$ the child of $e_k$ for $k=0,1,\ldots,j-1$; note that this descendant is unique, if it exists.  Not every edge $e$ has a descendant in $G_j$: some edges are removed.
We say that an edge $e\in G$ is removed on step $j$ if $e$ has a descendant in $G_{j-1}$ but does not have a descendant in $G_j$.

Then, for any edge $e\in G$, the number of times that $e$ appears in the cycle basis is equal to the number of times that the algorithm constructs a cycle $C$ containing a descendant of $e$.
If on the $j$-th step, for some edge $e$ we construct a cycle containing a descendant of $e$, using case 2A of the algorithm,
then $e$ is removed on step $j$.  Hence,
the number of times that $e$ appears in the cycle basis is bounded by one plus the number of times that the algorithm uses case 3 and constructs a cycle $C$ containing a descendant of $e$.

However, each time case 3 occurs, the descendant is removed with probability $\Omega(1/\log(V))$.
Hence, the probability that an edge $e$ appears in at least $w$ such cycles is bounded by
$$(1-\Omega(1/\log(V))^w.$$  For some $w=O(\log(V) \log(E))$, this probability can be made strictly smaller than $1/2E$, and so by a union bound, no such edge exists with probability $\geq 1/2$.

Suppose the input graph $G$ has no multi-edges or self-edges (this is the case that is relevant for our applications).  Then, $E=O(V^2)$; this is the worst case for a dense graph, though for our applications with a sparse $G$ we have $E=O(V)$.
So $\log(V) \log(E)=O(\log(V)^2)$.

If $G$ has multi-edges or self-edges, we can still prove the claim of the lemma by applying a ``pre-processing" before running the algorithm.  This case is not needed for us, but we give it for completeness.  First, remove self-edges and replace every multi-edge with a single edge to obtain some $\tilde G$ with $V$ vertices and $\tilde E=O(V^2)$ edges.  Construct a cycle basis for $\tilde G$ in which every edge appears at most $O(\log(V)^2)$ times.
If an edge in $\tilde G$ connects two vertices which have multiple edges between them in $G$, replace each occurence of that edge in the cycle basis with an arbitrary one of those multiple edges in $G$.
Then, add every self-edge of $G$ as a cycle to this cycle basis.  Also, if any two vertices $u,v$ in $G$ have multiple edges, labeled $e_1,e_2,\ldots,e_k$ for some $k$, add the cycle $e_j e_{j+1}^{-1}$ for  each $j=1,\ldots,k-1$ to this cycle basis.  The result is a cycle basis for $G$ with the given property.

The algorithm $A$ that we have given is efficient, since it terminates after at most $E$ recursive calls and each step can be done efficiently (one can efficiently find a shortest cycle in the graph by, for example, for each edge in turn, considering the graph with that edge removed and searching for a shortest path between the vertices which were endpoints of the edge).  Each time the algorithm is run, it succeeds in finding such a basis with probability $\geq 1/2$, and the desired properties of the basis can be efficiently verified.

Suppose non-negative edge weights are assigned to each edge.  Then expected cycle basis weight is, by linearity of expectation, the sum over edges of the weight of that edge times the expected number of times the algorithm constructs a cycle $C$ containing a descendant of the edge.  That expected number of times is $O(\log(V))$.  Hence, the expected cycle basis weight is bounded by the total edge weight times $O(\log(V))$, so with probability $\Omega(1)$, the cycle basis weight is bounded by total edge weight times $O(\log(V))$.

Finally, suppose the input graph $G$ is sparse.  Then, the graph remains sparse throughout the algorithm as the degree only decreases.  When the algorithm constructs a cycle in either case 2A or 3 when called with some graph $G'$ (where perhaps $G'=G$ or perhaps $G'$ is constructed by the recursive process), the cycle has length at most logarithmic in the number of vertices of $G'$.  All subsequent cycles constructed contain each edge (and, by sparseness, each vertex) of $G'$ at most polylogarithmically many times.  So, the cycle intersects only polylogarithmically many cycles later in the basis.
\end{proof}
\end{lemma}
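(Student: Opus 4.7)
The plan is to construct the basis via a recursive randomized reduction of the graph, producing one cycle (and killing at least one edge) at each step that actually contributes to the basis, and then to analyze the random process to bound the multiplicity per edge.

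At the top level, I would define an algorithm $A(G)$ that returns a cycle basis by the following case analysis. If $G$ has a vertex $v$ of degree $\leq 1$, then $v$ lies in no cycle, so recursively call $A$ on the induced subgraph with $v$ removed. If $G$ has a vertex $v$ of degree $2$ whose two incidences form a self-loop $e$, add the trivial cycle $\{e\}$ to the basis, delete $e$, and recurse. If $v$ has degree $2$ with distinct neighbors $x,y$, then ``smooth" at $v$: replace $v$ and its two incident edges by a single edge $(x,y)$, recurse on the smaller graph, and in each returned cycle expand any occurrence of $(x,y)$ back to $(x,v)(v,y)$. In the remaining case every vertex has degree $\geq 3$, so a Moore-bound / BFS argument shows that from any vertex there are more than $2^\ell$ length-$\ell$ walks and hence for $\ell = O(\log V)$ two walks must collide, producing a simple cycle $C$ of length $O(\log V)$; add $C$ to the basis, remove one edge of $C$ chosen uniformly at random, and recurse.

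Correctness and weak fundamentality follow by induction on $(V,E)$. Each time a cycle is appended to the basis, an edge of that cycle is simultaneously deleted from the working graph and hence can never appear in any later cycle, witnessing the weakly fundamental order. That we obtain an actual basis, rather than just an independent set, is by the inductive hypothesis together with the trivial check in the degree-$\leq 2$ reductions.

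The main step is bounding the multiplicity of a fixed edge $e \in G$. I would formalize the notion of a \emph{descendant} of $e$ in the graph $G_j$ reached after $j$ recursive steps (the smoothing operation being the only place where an edge is relabeled), and observe that the number of times $e$ appears in the final basis equals the number of basis cycles that contain a descendant of $e$, which in turn is bounded by one plus the number of uses of the degree-$\geq 3$ case producing a cycle through a descendant of $e$. In each such use, the random edge deletion removes the descendant of $e$ with probability $\Omega(1/\log V)$ because the cycle has length $O(\log V)$. Thus the probability that $e$ participates in $w$ such cycles is at most $(1-\Omega(1/\log V))^{w}$; taking $w = O(\log V \cdot \log E) = O(\log^2 V)$ makes this smaller than $1/(2E)$, and a union bound over the $E$ edges yields the desired multiplicity bound with probability at least $1/2$. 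The extra claims then follow easily: the weighted-basis statement is linearity of expectation applied to the expected $O(\log V)$ cycles through each edge; the cycle-intersection statement uses that in a sparse graph each cycle of length $O(\log V)$ touches only $O(\log V)$ vertices, each of which appears in only polylogarithmically many later cycles by the same multiplicity bound. The one minor nuisance is handling multi-edges and self-loops in the input, which I would dispatch by a preprocessing step: simplify to an underlying simple graph, run the algorithm there, and then separately add the obvious small cycles produced by the parallel edges and loops.
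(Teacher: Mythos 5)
Your proposal matches the paper's proof essentially line for line: the same recursive case structure (degree-1 pruning, degree-2 smoothing with the self-loop subcase, a degree-$\geq 3$ step that extracts an $O(\log V)$ cycle via the Moore-bound walk-counting argument and deletes a uniformly random edge of it), the same "descendant" bookkeeping to track an edge through smoothing, the same $(1-\Omega(1/\log V))^w$ tail bound plus union bound over edges, and the same handling of the weighted and intersection claims and the multi-edge/self-loop preprocessing. No meaningful differences to report.
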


\section{Pushing to Boundary: Alternative Proof}
\label{ptbap}
Here we give an alternative to lemma \ref{push} that may be of independent interest as an isoperimetric inequality.

\begin{lemma}
\label{pushalt}
    Let $(B^k,S^{k-1})$ be the unit ball in Euclidean $k$-space. Let $V$ be a smooth singular $p$-cycle in $S^{k-1}$ for $p < k-1$. Suppose $V$ is the boundary of a smooth singular chain $W$ lying in $B^k$, then $V$ is also the boundary of a smooth singular chain $\lbar{W}$ lying in $S^{k-1}$ with
    \[
        p\parea(\lbar{W}) \leq 2^p(1+2\pi)p\parea(W)
    \]
\end{lemma}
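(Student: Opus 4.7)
The strategy is a ``halve-and-fill'' argument: split $W$ at a well-chosen radius, radially project the outer piece onto $S^{k-1}$, and then repair the resulting boundary mismatch using a filling inequality on $S^{k-1}$ itself. This naturally yields the two summands in the constant $2^p(1+2\pi) = 2^p + 2^p\cdot 2\pi$.

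First I would apply the co-area formula to $f(x) = |x|$ on $B^k$. Since $|\nabla f| = 1$, one obtains $\int_0^1 (p-1)\parea(W \cap S_r)\, dr \leq p\parea(W)$. Restricting the integration to $r \in [1/2, 1]$, an interval of length $1/2$, the mean value principle yields a radius $r$ in this range with $(p-1)\parea(W \cap S_r) \leq 2\,p\parea(W)$. Split $W = W_{\mathrm{out}} + W_{\mathrm{in}}$ across $S_r$.

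Next, radially project $W_{\mathrm{out}}$ into $S^{k-1}$ by $\pi(x) = x/|x|$. On $\{|x| \geq r\}$, the differential $d\pi$ kills the radial direction and expands tangential directions by $1/|x| \leq 1/r \leq 2$, so its $p$-dimensional Jacobian is bounded by $2^p$. Hence $\pi(W_{\mathrm{out}})$ is a $p$-chain on $S^{k-1}$ with $p\parea(\pi(W_{\mathrm{out}})) \leq 2^p\,p\parea(W)$, and its boundary is $V - Y$, where $Y := \pi(W \cap S_r)$ is a $(p-1)$-cycle on $S^{k-1}$ satisfying $(p-1)\parea(Y) \leq (1/r)^{p-1} \cdot 2\,p\parea(W) \leq 2^p\,p\parea(W)$.

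Finally, since $p - 1 < k - 1$, the cycle $Y$ is null-homologous in $S^{k-1}$, and one needs a filling inequality on the round sphere: a $p$-chain $Z \subset S^{k-1}$ with $\partial Z = Y$ and $p\parea(Z) \leq 2\pi\,(p-1)\parea(Y)$. I would produce $Z$ by geodesic-coning $Y$ to a point $q \in S^{k-1}$; averaging over $q$ uniform on $S^{k-1}$, the tangential Jacobian of the cone is governed by
\[
  \int_0^\pi \sin^{k-p-1}(\theta) \int_0^\theta \sin^{p-1}(u)\, du \, d\theta,
\]
which is finite precisely when $k - p - 1 > 0$ --- this is where the hypothesis $p < k - 1$ enters in an essential way. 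Setting $\bar W := \pi(W_{\mathrm{out}}) + Z$ gives $\partial \bar W = V$ and
\[
  p\parea(\bar W) \leq 2^p\,p\parea(W) + 2\pi \cdot 2^p\,p\parea(W) = 2^p(1 + 2\pi)\,p\parea(W).
\]
The main obstacle is pinning down the constant $2\pi$ in the sphere-filling step; qualitative boundedness is immediate from the integrability above, but extracting exactly $2\pi$ requires a careful cone computation (or a slightly different choice of intermediate constants which then absorb into the same final bound). Everything else is standard co-area and Jacobian bookkeeping.
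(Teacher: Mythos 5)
Your proposal is correct and takes a genuinely cleaner route than the paper's. The paper first replaces $W$ by a least-area varifold, invokes the monotonicity formula, slices at a good radius $\tau\in(1/2,1)$, cones the slice \emph{inside} the intermediate sphere $S_\tau$, and only then radially projects the whole assembly to $S^{k-1}$; you slice $W$ directly, project the outer piece to $S^{k-1}$ first, and cone the projected slice on $S^{k-1}$. This reorganization dispenses with the varifold and monotonicity machinery entirely (the co-area mean-value choice of radius already gives the needed slice bound, and the trivial bound $p\parea(W_{\mathrm{out}})\le p\parea(W)$ replaces the monotonicity step), and it makes the two summands in $2^p(1+2\pi)$ transparent. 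Both arguments ultimately hinge on the same nontrivial ingredient, a spherical cone-filling bound, which you flag as your main uncertainty; but the paper's Lemma C, derived from the integral-geometry Lemma B, supplies exactly what you need with the sharper constant $\pi$ in place of $2\pi$: for any piecewise smooth $j$-cycle $\gamma$ in a round $S^n$ there is a cone point $q$ with $(j+1)\parea(C_q(\gamma))\le\pi\, j\parea(\gamma)$. Applied with $j=p-1$, $n=k-1$, this makes your step available with room to spare, so the final bound $2^p(1+2\pi)$ holds. One small misattribution: you locate the hypothesis $p<k-1$ in the convergence of your averaging integral, but that integral is already finite whenever $p\le k-1$; the real role of $p<k-1$ is the more basic one of ensuring $H_p(S^{k-1})=0$, so that $V$ (and likewise the slice cycle $Y$) bounds in $S^{k-1}$ at all, and so that a generic cone point misses $Y$.
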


\begin{proof}
    By the compactness properties of varifolds \cite{federer}, $W$ may be replaced with a least area varifold $Z$, $\de Z = V$, and $p\parea(Z) \leq p\parea(W)$. By proposition 3.7 of \cite{cm11}, the monotonicity formula holds in the context of stationary varifolds. This formula tells us that if we divide $Z$ into $Z_{\leq 1/2} \cup Z_{>1/2}$, the portions of $Z$ inside and outside $B^k_{1/2}$, the ball of radius $\frac{1}{2}$, that
    \begin{equation}\label{eq:lmproof1}
        p\parea(Z_{>1/2}) > \left(1 - \left(\frac{1}{2}\right)^p\right) p\parea(Z)
    \end{equation}
    
    Furthermore the co-area formula tells us that
    \begin{equation}\label{eq:lmproof2}
        dp\parea(Z_{>1/2}) \geq \int_{\tau=1/2}^1 (p-1)\parea(Z \cap S_\tau^{d-1})\ d\tau
    \end{equation}
    $S_{\tau}^{d-1}$ the sphere of radius $\tau$.

    Lines \ref{eq:lmproof1} and \ref{eq:lmproof2} imply that for some $\tau \in (\frac{1}{2},1)$ $(p-1)$-area of $Z \cap S_{\tau^{d-1}}$ satisfies:
    \begin{equation}
        (p-1)\parea(Z \cap S_\tau^{d-1}) \leq 2\left(1 - \left(\frac{1}{2}\right)^p\right)p\parea(Z) < 2p\parea(Z)
    \end{equation}

    Now let $C$ be the cone $C(Z \cap S_\tau^{d-1})$ embedded in $S_\tau^{d-1}$ as follows. Carefully, as described next, choose a point $\lbar{q} \in S_\tau^{d-1}$ disjoint from $Z$. Let $C$ be the geodesic cone where each point of $Z \cap S_\tau^{d-1}$ is joined to $q$, the antipode of $\lbar{q}$, by the unique shortest geodesic arc (which has length $<\pi\tau$).

    \begin{customlm}{A}
        Given any piecewise smooth $j$-cycle and an appropriate choice of a point $q \in S_\tau^{d-1}$, then $p\parea(C) < \pi\tau((p-1)\parea(Z \cap S^{d-1})) \leq 2\pi\tau p\parea(Z)$.
    \end{customlm}

    We will need a small digression into integral geometry to prove Lemma A. For motivation recall Croften's Theorem, that the length of a rectifiable curve $\gamma \in \R^n$ is (up to a closure of a constant ) the integral over the kinematic measure\footnote{See \cite{santalo76}.} on $\{l\}$, the lines of $\R^n$ of the number of intersection points, $\abs{l \cap \gamma}$
    \begin{equation}
        L(\gamma) :: \int_{\{l\}} \abs{l \cap \gamma}
    \end{equation}

    An equivalent formulation is that
    \begin{equation}\label{eq:dualform}
        L(\gamma) :: \int_{r \in \R^n} d(\text{vol}) L(\gamma_r)
    \end{equation}
    where $\gamma_r$ is the projection of $\gamma$ into the visual sphere of oriented rays emanating from $r \in \R^n$ and the length in the integral is w.r.t\ spherical geometry of $S^{n-1}$.

    In spherical geometry there is also the notion of the visual $S_s^{n-1}$ sphere from each point $s \in S^n$, defined by the oriented geodesic arcs of length $\pi$ leaving $s$. Very similar to (\ref{eq:dualform}) is

    \begin{customlm}{B}
        Let $\gamma$ be a piecewise smooth singular $j$-cycle in $S^n$, $j < n$, then
        \[
            j\parea(\gamma) = \int_{s \in S^n} d(\text{vol}) j\parea(\gamma_s)
        \]
        (actually the constant of proportionality is one in this case).
    \end{customlm}

    \begin{proof}
        To develop some intuition, consider the appearance of a small round $j$-disk $\Delta$ of radius $= \delta > 0$ whose center is at some distance $\Delta$ from $s$. One may check that the \emph{apparent area}, i.e.\ the area in the visual sphere about $s$, is
        \begin{equation}\label{eq:visualsphere}
            \operatorname{A}_s(\Delta) = \Omega_j \left(\prod_{i=1}^j \sin(\beta_i)\right) \slash \sin(t)
        \end{equation}
        where $\Omega_j$ is the $j$-volume of the unit $j$-ball, and where the $\beta_i$ are the $j$ dihedral angle between $\Delta$ and the ray from $s$ to the origin of $\Delta$.

        The important thing to notice is that $\sin(t)$ in the denominator, objects very close to distance 0 \emph{or} distance $\pi$ look enormous.

        We view every small speck of $\gamma$, which we may think of as a tiny disk in a smooth singular simplex of $\gamma$, so small that it is indeed well-approximated by a round $j\text{-ball}_\delta$ such as $\Delta$. So, it is very easy to believe that the only thing that, on average, could affect $\int_{s \in S^n} \operatorname{A}_s(\gamma)$ is the area $\operatorname{A}(\gamma)$. This can be established by elementary approximation arguments and observing that both sides of (\ref{eq:visualsphere}) are additive under gluing. In fact, for every $s$
        \begin{equation}\label{eq:additive}
            \operatorname{A}_s(X \cup Y) = \operatorname{A}_s(X) + \operatorname{A}_s(Y) - \operatorname{A}_s(X \cap Y)
        \end{equation}
        since these are just ordinary $j$-areas in $S^{n-1}$.

        To see that the constant is indeed unity for equation \ref{eq:additive}, notice that if $S^j \subset S^n$ is any totally geodesic (``great'') $j$-sphere in the $n$-sphere that from any point $s \in S^n$ (disjoint from $S^j$, or on $S^j$) the visual image of $S^j$ in the visual sphere $S_s^{n-1}$ is also a great $S^j \subset S_s^{n-1}$.\footnote{One way to see this is to consider the unique great $S^{j+1}$ containing $s$ and $S^j$. The visual image of $S^j$ in this $S^{j+1}$ is the total field (with multiplicity 1) and equal to the visual image of $S^j$ from $s$ in $S_s^{n-1}$, in both cases it has exactly the same $j$-area as the source cycle $S^j$.} This example shows that the constant is one: for a great $S^j$, not just the average, but every individual visual image has the same area as the original $j$-cycle. This completes the proof of Lemma B.
    \end{proof}

    From Lemma B we can now prove an isoperimetric inequality, Lemma C, whose statement and proof result from discussions with Xin Zhou. Lemma A is the immediate application of Lemma C.

    \begin{customlm}{C}
        Let $\gamma$ be a piecewise smooth singular $j$-cycle $\gamma \subset S^n$. There is a point $q \in S^n$ so that the core $C_q(\gamma)$ the cone of geodesic segments of length $\leq \pi$ beginning on $\gamma$ and ending at $a$ satisfies:
        \[
            (j+1)\parea(C_q(\gamma)) \leq \pi(j\parea(\gamma))
        \]
    \end{customlm}

    \begin{proof}
        Since $\int_{s \in S^n} \operatorname{A}(\gamma_s) = \operatorname{A}(\gamma)$, where we have normalized so that $\int_{S^n} 1 = 1$, there exists point $q \in S^n$ so that $\operatorname{A}(\gamma_q) \leq \operatorname{A}(\gamma)$. Now $C_q(\gamma)$ contains a geodesic segment of length $\leq \pi$ for every point of $\gamma$. In the Euclidean case Fubini's then would conclude the proof, but the positive curvature, bending the fibers \emph{together} reduces the area of the cone making the inequality hold even more strongly.
    \end{proof}

    To translate Lemma B to Lemma C set $n = d-1$ and $j = q-1$.

    Now consider the verifold $Z^\pr$, with $\de Z^\pr = W$, $Z^\pr = Z_{>1/2} \cup C_q$. We have
    \begin{equation}
        p\parea(Z^\pr) \leq \left(1 - \left(\frac{1}{2}\right)^p\right)p\parea(Z) + 2\pi\tau(p\parea(Z)) < (1+2\pi)p\parea(Z)
    \end{equation}

    Now define $\lbar{Z}$ to be the radially projected image of $Z^\pr$ into $S^{d-1}$. Since this projection has Lipschitz constant $=2$, we obtain:
    \begin{equation}
        p\parea(\lbar{Z}) \leq 2^p(1+2\pi) p\parea(Z)
    \end{equation}
    as desired.
\end{proof}

\end{document}